\documentclass[reqno, 12pt]{amsart}
\usepackage{amsmath}
\usepackage[all]{xy}
\usepackage{amsfonts}
\usepackage{a4wide}
\usepackage{amssymb}
\usepackage{amsthm}

\usepackage{epsfig}
\usepackage{color}

\textwidth14cm
\textheight23cm
\flushbottom

\theoremstyle{plain}
\begingroup
\newtheorem{theorem}{Theorem}[section]

\newtheorem{lemma}[theorem]{Lemma}
\newtheorem{proposition}[theorem]{Proposition}
\newtheorem{corollary}[theorem]{Corollary}

\newtheorem{definition}[theorem]{Definition}
\newtheorem{rmk}[theorem]{Remark}
\newtheorem{example}[theorem]{Example}

\endgroup

\theoremstyle{remark}
\begingroup
\endgroup

\mathsurround=1pt
\mathchardef\emptyset="001F

\numberwithin{equation}{section}

\newcommand{\op}[1]{{\rm{#1}}}

\newcommand\res{\mathop{\hbox{\vrule height 7pt width .5pt depth 0pt
\vrule height .5pt width 6pt depth 0pt}}\nolimits}

\title[Regularity of $L^p$-vector fields with integer fluxes]
{Interior partial regularity for minimal $L^p$-vector fields with integer fluxes}

\author[Mircea Petrache]{Mircea Petrache}

\begin{document}
\begin{abstract}
 We use a new combinatorial technique to prove the optimal interior partial regularity result for $L^p$-vector fields with integer fluxes that minimize the $L^p$-energy. More precisely, we prove that the minimizing vector fields are H\"{o}lder continuous outside a set that is locally finite inside the domain. The results continue the program started in \cite{PR1}, but this paper is self-contained.
\end{abstract}

 \maketitle
\tableofcontents

\section{Introduction}
\subsection{The result}
In this work we consider vector fields $X\in L^p(B^3, \mathbb R^3)$ such that for every $a\in B^3$ and for almost every $r<\op{dist}(a,\partial B^3)$, 
\begin{equation}\label{star}
\int_{\partial B_r^3(a)} X\cdot \nu \in\mathbb Z, 
\end{equation}
where $\nu:\partial B_r^3(a)\to S^2$ is the outward unit normal vector. We call vector fields satisfying such flux conditions \emph{vector fields with integer fluxes}.\\

We observe that for $p\geq 3/2$ this class reduces to the divergence-free vector fields, and therefore we just look at the ``interesting'' range $p\in[1,3/2[$. It is clear that this class of vector fields is closed under $L^p$-convergence. The compactness result for \emph{weak} convergence holds only for $p>1$ and is the main result of our recent work with Tristan Rivi\`ere \cite{PR1}, and is based upon the introduction of a good distance between slices.\\

Here we concentrate on the interior regularity of $X\in L^p_{\mathbb Z}(B^3, \mathbb R^3)$ that are minima of the $L^p$-energy $||X||_{L^p(B^3)}$. We say that $X$ is a \emph{minimizer} if it achieves the minimum in the following problem for some $L^p$-function $\phi$ defined on the boundary of $B^3$ and having integer degree, i.e. $\int_{\partial B^3}\phi\in\mathbb Z$.

\begin{equation}\label{minpb}
\inf\left\{\int_{B^3} |X|^p: X\in L^p_{\mathbb Z}(B^3, \mathbb R^3),\:\nu_{B^3}\cdot X|_{\partial B^3}=\phi\right\}.
\end{equation}

Note that without the constraint \eqref{star} the minimization in \eqref{minpb} yields the minimum $X\equiv 0$ regardless of the choice of $\phi$. 
We recall the meaning of $\nu_{B^3}\cdot X|_{\partial B^3}$ in Definition \ref{bdryval} and we show the existence of a minimizer for \eqref{minpb} in Lemma \ref{existsmin}. This existence result depends on \cite{PR1} and the fact that in general it is nontrivial is implied by the properties of the trace present in \cite{P3}. The main result of the present work is the following:

\begin{theorem}\label{mainthm}
 Let $p\in]1,3/2[$, and let $X\in L^p(B^3, \mathbb R^3)$ be a minimizer. Then $X$ is locally H\"older-continuous away from a locally finite set $\Sigma\subset B^3$.
\end{theorem}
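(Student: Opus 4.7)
My plan follows the standard three-step scheme for partial regularity of variational problems---monotonicity, $\varepsilon$-regularity, and a covering argument---specialized to the observation that the integer flux condition forces the singularities of $X$ to be isolated integer-charged atoms of $\operatorname{div} X$.

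\emph{Step 1: Structure of the singular set.} I would first check that the distributional divergence of any $X \in L^p_{\mathbb Z}(B^3,\mathbb R^3)$ is a purely atomic, integer-valued signed measure
\[
\operatorname{div} X \,=\, \sum_i d_i\,\delta_{a_i},\qquad d_i\in\mathbb Z\setminus\{0\}.
\]
Indeed, \eqref{star} says $r\mapsto \int_{\partial B_r(a)} X\cdot\nu$ is integer-valued for a.e.\ $r$; away from the atoms of $\operatorname{div} X$ this function is continuous in $r$, hence locally constant, which rules out any absolutely continuous part or lower-dimensional singular part. Thus $X$ is distributionally divergence-free on $B^3\setminus\{a_i\}$, and the regularity problem reduces to controlling the set of atoms.

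\emph{Step 2: Monotonicity and density.} I would derive an almost-monotonicity formula for the scale-invariant density
\[
\Theta(a,r) \,:=\, r^{2p-3}\int_{B_r(a)}|X|^p\,dx,
\]
the exponent being dictated by the prototype singular field $X_0(x)=(x-a)/|x-a|^3$, whose energy satisfies $\int_{B_r(a)}|X_0|^p\,dx=c\,r^{3-2p}$ and hence produces constant $\Theta$. The formula follows by testing minimality of $X$ against radial one-parameter families of flux-preserving competitors, and yields a well-defined limit $\Theta(a,0^+)$ at every point. The candidate singular set is then $\Sigma:=\{a:\Theta(a,0^+)\geq\varepsilon_0\}$ for a suitable threshold $\varepsilon_0>0$.

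\emph{Step 3: $\varepsilon$-regularity and covering.} I would then prove that $\Theta(a,r)<\varepsilon_0$ implies $X$ is H\"older-continuous on $B_{r/2}(a)$. The strategy is contrapositive: I show that any ball containing at least one atom of $\operatorname{div} X$ has $\Theta(a,r)\geq\varepsilon_0$ (this is the hard step, handled by the combinatorial argument below), so a small density forces $X$ to be divergence-free on $B_r(a)$; standard Morrey-type decay from comparison with the harmonic extension of its trace then delivers H\"older regularity. Local finiteness of $\Sigma$ follows at once, because each $a\in\Sigma$ contributes at least $\varepsilon_0\,r^{3-2p}$ to the energy on every small scale $r$, and $\int_{B^3}|X|^p$ is finite.

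\emph{Main obstacle.} The delicate point is the lower bound on $\Theta(a,r)$ when the ball contains atoms of $\operatorname{div} X$: a single isolated charge is immediate by Coulomb comparison, but clusters of atoms with signs that partially cancel can drive the energy of any flux-matching competitor down. The right lower bound must therefore be taken as an infimum over all $L^p$-minimal transports between positively and negatively charged atoms of $\operatorname{div} X$ in the ball, and must degenerate only when the charges can be genuinely annihilated by short dipoles. Optimizing this combinatorially---in the spirit of the Brezis--Coron--Lieb minimal connection for $S^2$-valued harmonic maps, but adapted to the $L^p$ setting of the present paper---is where I expect the main technical effort to lie and where the ``new combinatorial technique'' announced in the abstract presumably enters.
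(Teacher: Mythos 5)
Your outline founders on two points, one structural and one arithmetic. First, Step~1 is false as stated: for a general $X\in L^p_{\mathbb Z}$ the distributional divergence need not be an atomic (or even Radon) measure. Take dipoles $D_i$ with unit charges at distance $d_i=4^{-i}$ accumulating at a point; then $\|D_i\|_{L^p}^p\sim C\,d_i^{\,3-2p}$ is summable for $p\in\left]1,3/2\right[$, the sum $X=\sum_i D_i$ lies in $L^p_{\mathbb Z}$ (a.e.\ sphere splits only finitely many dipoles), yet $\op{div}X$ has locally infinite mass near the accumulation point. Your argument cannot exclude this because $r\mapsto\int_{\partial B_r(a)}X\cdot\nu$ is only an a.e.-defined $L^1$ function of $r$: ``integer-valued a.e.\ hence locally constant'' has no force without continuity, and splitting $\op{div}X$ into ``atoms plus the rest'' presupposes it is a measure. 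For a minimizer the atomic, locally finite structure is the \emph{conclusion} of Theorem \ref{mainthm}, not an admissible reduction. Relatedly, the contrapositive form of your $\epsilon$-regularity (``an atom in the ball forces $\Theta\geq\epsilon_0$'') cannot be proved by any minimal-connection-type lower bound, because such bounds degenerate on tight dipoles (a dipole of separation $d$ costs only $\sim d^{\,3-2p}$); what must be shown is that a \emph{minimizer} with small boundary energy carries no charges at all, and this forces a comparison argument. That is exactly what the paper does: strong approximation by $\mathcal R^\infty_{\mathbb Z}$ (Theorem \ref{densityRK}), Smirnov decomposition encoded in a weighted graph, charge cancellation by elementary operations via max-flow/min-cut (Proposition \ref{regcase}, Lemma \ref{genericcase}), and passage to the limit using the weak closure of the class \cite{PR1} to contradict minimality (Theorem \ref{ereg}). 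Also, once $\op{div}X=0$, H\"older continuity requires the degenerate elliptic theory for the system of Lemma \ref{minforms} (Uhlenbeck--Tolksdorf--Hamburger, Proposition \ref{regularityresult}), not comparison with a harmonic extension.

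Second, your deduction of local finiteness is wrong: since $3-2p>0$, the per-point contribution $\epsilon_0 r^{3-2p}$ vanishes as $r\to0$, so disjoint balls around $r$-separated singular points only give $N\leq C\epsilon_0^{-1}r^{-(3-2p)}$, i.e.\ $\mathcal H^{3-2p}(\Sigma)=0$ (Proposition \ref{dimsing}) --- the same codimension bound as for minimizing $p$-harmonic maps, strictly weaker than discreteness. The passage from this to a locally finite set is where most of the paper's work lies and is absent from your plan: strong compactness of minimizers via the Luckhaus-type interpolant (Proposition \ref{interpolant}, Theorem \ref{weakstrong}), monotonicity and radiality of tangent maps (Propositions \ref{monotonicity}, \ref{radiality}), homogeneity from stationarity (Lemmas \ref{x0invar}, \ref{dirinvar}), the classification that tangent maps have at most an isolated singularity (Proposition \ref{dimzero}), upper semicontinuity of singular sets (Corollary \ref{schssing}), and Federer dimension reduction (Theorem \ref{mainthm2}). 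Even granting your Steps 2--3, the proposal stops at $\mathcal H^{3-2p}(\op{sing}X)=0$ and does not reach the statement of Theorem \ref{mainthm}.
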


In the future work \cite{P4} we will address the regularity up to the boundary.

\subsection{Using Calculus of Variations to construct $U(1)$-bundles}
In \cite{PR1} a first step was achieved towards the study of weak bundles with topologically nontrivial singularities. We recall here the main ideas of that approach.\\
First of all, recall that $U(1)$-line bundles over $2$-dimensional surfaces are classified up to isomorphism by their first Chern class $c_1$, which for an $U(1)$-bundle $P$ over a compact surface $\Sigma$ is expressible via Chern-Weil theory as
$$
c_1(P)=\int_\Sigma F_A\in2\pi\mathbb Z\equiv H^2(\Sigma,\mathbb Z),
$$
where $F_A$ is the curvature of any connection $A$ on $P$ (see \cite{Zhang}). By identifying the Lie algebra $u(1)$ with $\mathbb R$, we can identify $F_A$ with an $\mathbb R$-valued $2$-form on $\Sigma$. In the ``supercritical'' dimension $3$, a $2$-form corresponds to a curvature if it assings integer volume to each closed surface (that integer corresponds to the $c_1$ of a line bundle restricted to the surface). By identifying $2$-forms with $1$-vectors in $3$ dimensions, we arrive back at our definition of vector fields with integer fluxes.\\

The idea started by \cite{rivkess} was to study an energy minimization problem on bundles defined weakly as sketched above, the hope being that the minimizers would conserve some nontrivial information expressible in terms of $c_1$. 
\begin{definition}[\cite{rivkess}]\label{definitionFZabel}
 Let $\Omega\subset\mathbb R^3$ be an open domain. We say that an $L^p$-integrable $2$-form $F$ on $\Omega$ a \emph{curvature of a weak line bundle with group $U(1)$}, if for all $x\in\Omega$ and for almost all $r>0$ such that $B(x,r)\subset\Omega$, there holds
$$
\int_{\partial B(x,r)}i^*F\in\mathbb Z,
$$
where $i:\partial B(x,r)\to\mathbb R^3$ is the inclusion map. We denote by $\mathcal F_{\mathbb Z}^p(\Omega)$ the class of such $F$.
\end{definition}
The above definition furnishes a definition of bundles in terms of slices on spheres, and a suggestive parallel can be made with the theory of scans as in \cite{HR},\cite{HR1}. A first consequence of this parallel is the idea leading to the proof of weak compactness in \cite{PR1}.\\

The main novelty of the above kind of definition in comparison with previous contributions, is that no assumption is made a priori regarding the existence of an underlying topological bundle structure, and one only assumes the existence of a curvature form respecting the Chern class constraint. This is the natural setting in which to study regularity problems, and in which to construct new bundles by minimizing the energy. In such way we ``leave the minimizer free'' to find the ``most competitive'' singularities, instead of forcing them on it at the beginning.\\

Given an $L^p$-integrable form $\phi$ on $\partial\Omega$, consider the following minimization problem which is just the translation of \eqref{minpb} into the language of differential forms:
\begin{equation}\label{minpb2}
\inf\left\{\int_\Omega|F|^pd\mathcal H^3:\:F\in\mathcal F_{\mathbb Z}^p(\Omega), i^*_{\partial\Omega}F=\phi\right\}. 
\end{equation}
The first positive result obtained in \cite{PR1} was the fact that $\mathcal F_{\mathbb Z}^p(\Omega)$ is closed under weak convergence. The precise statement of the result is as follows.
\begin{theorem}[\cite{PR1}, Main Theorem]\label{closureweakbdl}
Suppose $F_n\in\mathcal F_{\mathbb Z}^p(\Omega)$ converge weakly to a $2$-form $F$. Then $F\in\mathcal F_{\mathbb Z}^p(\Omega)$.
\end{theorem}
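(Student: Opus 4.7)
The plan is to fix a point $x\in\Omega$ and a radius $R<\op{dist}(x,\partial\Omega)$, and to show that $\int_{\partial B(x,r)}i^*F\in\mb{Z}$ for almost every $r\in(0,R)$; by the arbitrariness of $x$ this gives $F\in\m{F}_{\mb{Z}}^p(\Omega)$. First I would set up the slicing. By Fubini's theorem in polar coordinates around $x$, for a.e.\ $r\in(0,R)$ both $F_n|_{\partial B(x,r)}$ and $F|_{\partial B(x,r)}$ are well defined as elements of $L^p(\partial B(x,r))$, and the coarea-type inequality
$$\int_0^R \norm{F_n|_{\partial B(x,r)}}_{L^p(\partial B(x,r))}^p\, dr \le C\,\norm{F_n}_{L^p(B(x,R))}^p$$
holds uniformly in $n$ (and analogously for $F$). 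Define the flux profiles $\alpha_n(r):=\int_{\partial B(x,r)}i^*F_n$ and $\alpha(r):=\int_{\partial B(x,r)}i^*F$; the hypothesis $F_n\in\m{F}_{\mb{Z}}^p(\Omega)$ is the statement that $\alpha_n(r)\in\mb{Z}$ for a.e.\ $r$.

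Next I would extract from the weak convergence $F_n\rightharpoonup F$ the weak convergence $\alpha_n\rightharpoonup\alpha$ in $L^p_{\op{loc}}(0,R)$: testing $F_n-F$ against a radial vector field of the form $\psi(|y-x|)(y-x)/|y-x|$ with $\psi\in C_c^\infty(0,R)$ and using the coarea formula gives
$$\int_0^R \psi(r)\,\alpha_n(r)\,dr \;\longrightarrow\; \int_0^R \psi(r)\,\alpha(r)\,dr.$$
This by itself is \emph{not} enough: integer-valued functions are not weakly closed in $L^p$ (oscillating step functions can converge weakly to any real-valued limit), so one must upgrade the weak convergence to some form of pointwise a.e.\ subsequential convergence of $\alpha_n$.

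The crucial ingredient, which is precisely the content of \cite{PR1}, is a slice-compactness statement. To each slice one associates, via the integer-flux hypothesis, a dual integer rectifiable $2$-current on $\partial B(x,r)$, and one measures the distance between slices with a variant of the flat norm on currents, call it $d_{\op{sl}}$. The $L^p$-bound on the sequence $F_n$ controls $d_{\op{sl}}(F_n|_{\partial B(x,r)},F_m|_{\partial B(x,r)})$ in an integrated-in-$r$ sense, and the compactness theorem for integer rectifiable currents of bounded mass and boundary then yields a subsequence $n_k$ and a full-measure set $E\subset(0,R)$ along which $F_{n_k}|_{\partial B(x,r)}$ converges in $d_{\op{sl}}$ for $r\in E$. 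The distance is tailored so that $d_{\op{sl}}$-convergence implies $\alpha_{n_k}(r)\to\alpha(r)$ on $E$.

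Once pointwise a.e.\ convergence is in hand, the conclusion is immediate: since $\mb{Z}$ is closed in $\mb{R}$ and $\alpha_{n_k}(r)\in\mb{Z}$ for each $k$ and a.e.\ $r$, the limit satisfies $\alpha(r)\in\mb{Z}$ for a.e.\ $r\in(0,R)$. The hard part is the construction and control of the slice distance $d_{\op{sl}}$ in the third step: it must be weak enough to be bounded by the $L^p$-norm of the ambient form (so as to be automatic from the coarea estimate), yet strong enough that passing to limits preserves both the rectifiable structure of the slice and its integer mass. Everything else in the argument is a routine combination of Fubini, weak-$L^p$ duality, and the closedness of $\mb{Z}$ in $\mb{R}$; the whole force of the theorem is concentrated in this single functional-analytic obstruction.
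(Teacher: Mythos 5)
Your overall architecture (slice on spheres around a fixed center, observe that weak convergence of the flux profiles $\alpha_n$ is useless because integer-valued functions are not weakly closed, then upgrade to a.e.-in-$r$ convergence via a distance between slices) is indeed the strategy that the cited work follows — note that the paper under review does not reprove this statement at all, it imports it from \cite{PR1}, whose announced mechanism is exactly ``a good distance between slices.'' But as a proof your text has a genuine gap: the entire analytic content is concentrated in your third step, and there you simply assert that it ``is precisely the content of \cite{PR1}'' — that is, you invoke the very theorem you are being asked to prove. Nothing in your outline constructs the distance, proves the estimate relating it to the $L^p$-energy, or shows that integer degree is closed under it; so the argument is circular as it stands.

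Moreover, the two concrete mechanisms you do propose for that step are not correct as stated. First, a uniform $L^p$-bound on $F_n$ cannot control $d_{\op{sl}}\bigl(F_n|_{\partial B(x,r)},F_m|_{\partial B(x,r)}\bigr)$: two fixed, different divergence-free fields of bounded norm have slices at a fixed positive distance, so any Cauchy-type control must come from the weak convergence $F_n-F_m\rightharpoonup 0$ combined with an equicontinuity-in-$r$ estimate for the slice map $r\mapsto F_n|_{\partial B(x,r)}$ with respect to the distance; this is where the restriction $p>1$ (via H\"older, giving a modulus of continuity in $r$ controlled by the energy) enters, and your outline never uses $p>1$ anywhere — a warning sign, since the closure statement is precisely the point that fails to be routine at $p=1$. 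Second, Federer--Fleming compactness applied ``slice-wise'' needs mass and boundary-mass bounds that are uniform in $n$ for a \emph{fixed} $r$, while Fubini/coarea only gives integrated bounds; extracting one subsequence that works simultaneously for a.e.\ $r$ (rather than an $r$-dependent subsequence) is exactly the crux, and is usually resolved by the equicontinuity just mentioned plus a Helly/Ascoli-type selection, not by current compactness on each sphere. Finally, even granting convergence of the slices in the distance, you still must identify the limit slices with the slices of the weak limit $F$ for a.e.\ $r$ before concluding $\alpha_{n_k}(r)\to\alpha(r)$; this identification step is also glossed over. In short: correct skeleton, but the decisive estimates are missing and partly replaced by incorrect claims.
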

The definition of the boundary value $i^*_{\partial\Omega}F=\phi$ is given in Section 5 of \cite{P3}:
\begin{definition}[\cite{P3}]\label{bdryval}
 Suppose $F\in\mathcal F_{\mathbb Z}^p(B^3)$, and suppose that $\phi$ is a $2$-form in $ L^p(\partial B^3)$ with $\int_{\partial B^3}\phi\in\mathbb Z$. Define the $2$-forms $F(\rho):=T_\rho^* F$, where $T_\rho:\partial B^3\to B^3$ is given by $T_\rho(\sigma)=(1-\rho)\sigma$ for $\rho\in]0,1[$. We say that $F\in\mathcal F_{\mathbb Z,\phi}^p(B^3)$ if $\lim_{\rho\to 0^+}d(F(\rho),\phi)=0$, where the distance $d$ is as in \cite{PR1}, \cite{P3}.
\end{definition}
It was shown in \cite{P3} that 
\begin{proposition}\label{boundaryvalconserved}
 $F_n\in\mathcal F_{\mathbb Z,\varphi_n}^p$ and $F_n\stackrel{L^p}{\rightharpoonup}F$ implies that $\varphi_n\stackrel{d}{\to}\phi$ for some $L^p$-form $\phi$ of integer degree, and that $F\in\mathcal F_{\mathbb Z,\phi}^p$. 
\end{proposition}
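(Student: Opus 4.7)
The plan is to exploit the metric structure of $d$ from \cite{PR1}: first show that $\{\varphi_n\}$ is $d$-Cauchy, and then identify its limit $\phi$ as the $\rho\to 0^+$ limit of $F(\rho)$. The argument is a standard diagonal three-epsilon scheme built on two ingredients about $d$.

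I would start by establishing a uniform modulus of continuity in the radial variable: there exists $\omega(\rho)\to 0$ as $\rho\to 0^+$ such that
\[
d\bigl(F_n(\rho_1),F_n(\rho_2)\bigr)\leq\omega\bigl(\max(\rho_1,\rho_2)\bigr)\qquad\text{for all }n,
\]
whenever $\rho_1,\rho_2\in(0,\rho_0)$. Since the distance $d$ is built in \cite{PR1} as a minimal-filling distance between homologous integer-flux slices, it should be dominated by the $L^p$-energy of $F_n$ in the shell between $T_{\rho_1}(\partial B^3)$ and $T_{\rho_2}(\partial B^3)$. The uniform $L^p$-bound $\sup_n\|F_n\|_{L^p}<\infty$ inherited from weak convergence, together with absolute continuity of the integral, yields such an $\omega$ independent of $n$. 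Letting $\rho_2\to 0^+$ for fixed $n$ gives the uniform estimate $d(F_n(\rho),\varphi_n)\leq\omega(\rho)$.

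Next, for each fixed $\rho\in(0,1)$ the pullback $F_n(\rho)$ lives on a fixed $2$-sphere well inside $B^3$, so a Fubini-type slicing argument, combined with Theorem~\ref{closureweakbdl} applied on that sphere and a lower-semicontinuity/continuity property of $d$ under weak $L^p$ convergence of integer-flux slices, should deliver $d(F_n(\rho),F(\rho))\to 0$ for almost every $\rho\in(0,\rho_0)$. Inserting this into
\[
d(\varphi_n,\varphi_m)\leq d(\varphi_n,F_n(\rho))+d(F_n(\rho),F(\rho))+d(F(\rho),F_m(\rho))+d(F_m(\rho),\varphi_m)
\]
and choosing $\rho$ small (to control the outer terms by step one, uniformly in $n,m$) and then $n,m$ large (to control the middle two by step two) shows that $\{\varphi_n\}$ is $d$-Cauchy. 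A limit $\phi$ is extracted either by completeness of the $d$-metric on integer-degree $L^p$-forms or, more concretely, by weak $L^p$-compactness of the uniformly $L^p$-bounded sequence $\{\varphi_n\}$ combined with lower semicontinuity of $d$. The integer-degree condition $\int_{\partial B^3}\phi\in\mathbb Z$ is preserved because the total flux is continuous under $d$-convergence and $\mathbb Z$ is closed.

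To conclude $F\in\mathcal F^p_{\mathbb Z,\phi}$, the same triangle inequality with $\phi$ in place of one $\varphi_m$ and with the pivot $F(\rho)$ shows $d(F(\rho),\phi)\to 0$. The main obstacle is step one: establishing the precise inequality $d(F_n(\rho_1),F_n(\rho_2))\leq C\int_{\text{shell}}|F_n|^p$ requires opening up the slice-filling construction of $d$ from \cite{PR1,P3}; once this shell estimate is in hand, the rest of the argument is the routine three/four-epsilon diagonalization sketched above.
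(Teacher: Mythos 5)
First, a point of reference: the paper does not prove this proposition at all --- it is quoted from \cite{P3} (``It was shown in \cite{P3} that\dots''), so there is no internal proof to compare yours against; any proof must open up the slice-distance machinery of \cite{PR1}, \cite{P3}, as you yourself anticipate at the end.

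As it stands, your sketch has two genuine gaps, and both occur at the places where you invoke weak convergence as if it were strong. (i) Your step one needs the shell energies $\int_{B^3\setminus B^3_{1-\rho}}|F_n|^p$ to be small uniformly in $n$; but weak $L^p$-convergence gives only $\sup_n\|F_n\|_{L^p}<\infty$ (Banach--Steinhaus), not equi-integrability of $|F_n|^p$. A sequence whose energy concentrates in thinner and thinner shells adjacent to $\partial B^3$ converges weakly (for $p>1$) while every fixed shell eventually carries a fixed amount of energy for large $n$, so no modulus $\omega$ independent of $n$ can be extracted from ``absolute continuity of the integral''; this is exactly the boundary-bubbling scenario that makes the trace statement nontrivial, as the paper itself remarks when it says the existence result ``is nontrivial'' because of the trace properties in \cite{P3}. (ii) Your step two asserts $d(F_n(\rho),F(\rho))\to 0$ for a.e.\ \emph{fixed} $\rho$. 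Restriction to a fixed sphere is not a weakly continuous operation on $L^p(B^3)$: weak convergence of $F_n$ gives no convergence (weak or otherwise) of the slices $F_n(\rho)$ along a fixed radius, and Theorem \ref{closureweakbdl} cannot be ``applied on that sphere'' since its hypothesis is weak convergence on a $3$-dimensional domain, not on a slice. At best one can work with quantities integrated in $\rho$ (Fubini/Fatou) and select good radii depending on the subsequence, which is a genuinely different, and more delicate, argument --- presumably the one carried out in \cite{P3} using lower semicontinuity properties of $d$. So the overall three-epsilon scheme is a reasonable skeleton, but both pillars it rests on are unproved and, in the generality you state them, false.
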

We also recall that forms $F$ correspond to vector fields $X$ via the formula 
$$
F_p(U,V)=X_p\cdot (U\times V)\quad\text{for all }U,V\in\mathbb R^3.
$$
The above results imply the existence of minimizers:
\begin{lemma}\label{existsmin}
 If $\phi$ is a $2$-form in $L^p$ [respectively, up to Hodge star duality with respect to the standard metric, an $L^p$-function] on $\partial B^3$ having integer degree and the Definition \ref{bdryval} [respectively, its translation for vector fields] is used for the boundary value, then the minimum is achieved in Problem \eqref{minpb2} [resp. \eqref{minpb}].
\end{lemma}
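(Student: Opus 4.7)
The plan is to run the direct method of the calculus of variations; the only two genuinely nontrivial ingredients are (i)~showing that the admissible class $\m{F}^p_{\mb{Z},\phi}(B^3)$ is nonempty and (ii)~passing to the weak limit of a minimizing sequence while preserving both the integer-flux constraint and the prescribed boundary trace. I will write the argument for the vector-field formulation \eqref{minpb}; the differential-form version \eqref{minpb2} follows identically via the correspondence $F\leftrightarrow X$ recalled above.

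To exhibit an admissible competitor, set $n:=\int_{\partial B^3}\phi\in\mb{Z}$ and split $\phi=\phi_0+n/(4\pi)$ so that $\int_{\partial B^3}\phi_0=0$. Solve the Neumann problem $\Delta h=0$ in $B^3$ with $\partial_\nu h=\phi_0$ (solvable because $\phi_0$ has vanishing mean) and set $X_0:=\nabla h$: standard $L^p$-elliptic estimates give $X_0\in L^p(B^3,\mb{R}^3)$, and by construction $X_0$ is divergence-free with normal trace $\phi_0$. Next add the explicit monopole $X_1(x):=\frac{n}{4\pi}\,x/|x|^3$, which belongs to $L^p(B^3)$ precisely because $p<3/2$, has normal trace $n/(4\pi)$ on $\partial B^3$, and whose flux through any interior sphere equals $n$ or $0$ according to whether the sphere encloses the origin. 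Then $X_0+X_1$ has integer fluxes and prescribed normal trace $\phi$, so the infimum in \eqref{minpb} is finite.

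Given admissibility, I would take a minimizing sequence $X_k$ with $\|X_k\|_{L^p}\to I$, the infimum in \eqref{minpb}. Since $p>1$, reflexivity of $L^p$ allows extraction of a (non-relabeled) subsequence converging weakly in $L^p(B^3,\mb{R}^3)$ to some $X$. Theorem~\ref{closureweakbdl}, translated through $F\leftrightarrow X$, guarantees that $X$ still satisfies the integer-flux constraint, while Proposition~\ref{boundaryvalconserved} guarantees that $X$ still has boundary datum $\phi$. Weak lower semicontinuity of $\|\cdot\|_{L^p}$ then yields $\|X\|_{L^p}\le I$, so $X$ attains the infimum.

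The point that requires care, and which makes this lemma more than a textbook application of the direct method, lies entirely in the two stability statements just invoked: the integer-flux condition involves an uncountable family of slicing conditions, preserved under weak $L^p$ convergence only thanks to the nontrivial closure Theorem~\ref{closureweakbdl} of \cite{PR1}; and the boundary value is not the standard Sobolev trace but a limit in the distance $d$ of Definition~\ref{bdryval}, whose continuity under weak convergence of the interior field is supplied by Proposition~\ref{boundaryvalconserved} from \cite{P3}. Once these two inputs are cited, every remaining step is classical.
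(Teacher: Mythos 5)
Your proof is correct and follows essentially the same route as the paper: take a minimizing sequence, extract a weak $L^p$ limit (using $p>1$), invoke Theorem \ref{closureweakbdl} for the integer-flux constraint and Proposition \ref{boundaryvalconserved} for the boundary datum, and conclude by weak lower semicontinuity of the norm. The only addition is your explicit verification that the admissible class is nonempty (harmonic Neumann extension plus a monopole, using $p<3/2$), a point the paper's proof leaves implicit; this is a welcome extra check rather than a genuinely different approach.
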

\begin{proof}
 We give the proof in the language of forms. Consider a minimizing sequence $F_i\in\mathcal F_{\mathbb Z,\phi}^p(B^3)$ and extract a weakly convergent subsequence, which we label in the same way, abusing notation. We denote by $F_\infty$ the limiting $L^p$-form. From Theorem \ref{closureweakbdl} we know that $F_\infty\in\mathcal F_{\mathbb Z}^p(B^3)$. Using Proposition \ref{boundaryvalconserved} we deduce that $F_\infty\in\mathcal F_{\mathbb Z,\phi}^p(B^3)$. Thus $F_\infty$ is the desired minimizer.
\end{proof}

Our Main Theorem \ref{mainthm} implies that a minimizer for the above problem, whose existence is proved above, actually gives an usual bundle (defined outside isolated points) locally inside the domain. Therefore the program of constructing weak bundles by variational methods works in the abelian case.

\subsection{Future steps: nonabelian bundles}
The broader motivation for our work with abelian bundles is that they show some of the features of nonabelian (e.g. $SU(2)$-) bundles, without the complications due to the nonabelianity of the group. The next step after the proof of regularity present here, is indeed to attack the case $SU(2)$, that is the prototype of nonabelian bundles. In this case the classifying invariant is the second Chern class $c_2\in H^4(M)$, defined for a $4$-dimensional surface, again expressible in terms of curvatures, as
$$
c_2(P)=\int_M\op{tr}(F_A\wedge F_A)\in8\pi^2\mathbb Z.
$$
The above invariant is still present and significant for weak bundles, as shown in \cite{Uhlchern}. The definition of weak bundles in a supercritical dimension (which in this case would be $5$) by slicing \cite{rivkess} can be stated also in this case (see also the treatment of \cite{isobe1}, \cite{isobe2}):
\begin{definition}\label{definitionFZ}
 Let $\Omega\subset\mathbb R^5$ be a domain. We call an $L^2$-form $F$ on $\Omega$ with values in $su(2)$ a representative of a weak $SU(2)$-bundle, if for all $x\in\Omega$ and for almost all $r>0$ such that $B(x,r)\subset\Omega$, there exists a gauge transformation $g\in L^\infty(S^4,SU(2))$ and a $W^{1,2}$-connection $A$ of an $SU(2)$-bundle over $S^4$, such that the restriction $F_{B(r,x)}$ of $F$ to $\partial B(x,r)\simeq S^4$ satisfies
$$
g^{-1} F_{B(r,x)}g=dA+A\wedge A.
$$
We call $\mathcal F_{\mathbb Z}^p(\Omega)$ the class of such $F$.
\end{definition}
The idea is to look at the Yang-Mills energy $\int_{\Omega}|F|^2d\mathcal H^5$ in the above class, and the expectation is to obtain also in that case some analogy with the results for the $U(1)$-case considered here. Definitions \ref{definitionFZabel} and \ref{definitionFZ} coincide after replacing $U(1),u(1),p$ by $SU(2),su(2),2$, because in the abelian case $g^{-1}Fg=F$ and $A\wedge A=0$, while in $F=dA$ the regularity of such $A$ directly follows from that of $F$ after applying Fubini's theorem. \\
Regularity results analogous to ours are not proved in the $SU(2)$-case, but a hint that they might be true comes from the singularity removal result of \cite{TaoTian}. Regarding the study of nonabelian bundles supercritical dimension, see also the more general works \cite{tian} and \cite{DoKr}, \cite{DoTh}.\\
The lack of an a priori given smooth structure will make the analogous of our result in the nonabelian case much different from previous works. The interest of the abelian case treated here is that it gives a new hint that the regularity result could be achievable.

\subsection{Relation to the regularity theory for harmonic maps and outline of the paper}
Our regularity result parallels the following result of Schoen and Uhlenbeck (case $p=2$), later extended by Hardt and Lin (for general $p\in]1,\infty[$) regarding minimizing harmonic maps. The result was proved for more general manifolds, but the special case stated here already presents the main difficulties. The more precise description the singularities is due to Brezis, Coron and Lieb.
\begin{theorem}[\cite{SU1},\cite{HL},\cite{BCL}]\label{SU3D}
 Suppose $u:B^3\to S^2$ is a map in $W^{1,2}(B^3,S^2)$ minimizing the $L^2$-norm of its differential. Then $u$ has H\"older-continuous derivative outside a locally finite set $\Sigma\subset B^3$. Moreover, $u$ realizes a nontrivial degree around small spheres centered at each point in $\Sigma$. 
\end{theorem}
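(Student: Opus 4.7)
The plan is to prove Hölder continuity of $\nabla u$ away from a locally finite set via the classical monotonicity plus $\epsilon$-regularity scheme, together with tangent-map analysis for the classification of singularities. First I would establish the monotonicity formula: for any $x_0 \in B^3$ and $0 < r < \op{dist}(x_0, \partial B^3)$, the normalized density
\[
\theta(x_0, r) := \frac{1}{r}\int_{B_r(x_0)} |\nabla u|^2
\]
is nondecreasing in $r$. This follows by testing the minimality of $u$ against inner (domain) variations: one compares $u|_{B_r(x_0)}$ with the $0$-homogeneous extension of its trace on $\partial B_r(x_0)$ and differentiates in $r$.

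The central analytic ingredient is the $\epsilon$-regularity statement: there exists $\epsilon_0 > 0$ such that whenever $\theta(x_0, r) < \epsilon_0$, the map $u$ is smooth on $B_{r/2}(x_0)$ with quantitative $C^{1,\alpha}$ estimates. I would prove this by a blow-up/contradiction argument: if a sequence of minimizers with normalized energy tending to zero failed uniform smoothness, one rescales and, using energy comparison against Luckhaus' nearest-point extension of the boundary traces, the limit is a harmonic map into the tangent plane of $S^2$ and thus enjoys standard interior estimates, contradicting the failure. A hole-filling iteration upgrades this one-scale decay into a power decay $\theta(x_0, r') \leq C(r'/r)^{2\alpha}\theta(x_0, r)$, from which Hölder continuity of $\nabla u$ follows by Campanato's theorem.

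Set $\Sigma := \{x \in B^3 : \theta(x, 0^+) \geq \epsilon_0\}$; by $\epsilon$-regularity $u$ is smooth on $B^3 \setminus \Sigma$, and $\Sigma$ is closed by the upper semicontinuity of the monotone density $\theta(\cdot, 0^+)$. For $x_0 \in \Sigma$, the rescalings $u_r(y) := u(x_0 + ry)$ are uniformly bounded in $W^{1,2}_{\mathrm{loc}}(\mathbb{R}^3)$ by monotonicity; a subsequence converges weakly, and strong convergence on good annular slices (Luckhaus again, plus a minimizing comparison) shows the limit $u_*$ is itself energy-minimizing. Scale invariance of the monotone density forces $u_*(y) = \phi(y/|y|)$ for some minimizing harmonic map $\phi: S^2 \to S^2$. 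By the Brezis--Coron classification, nonconstant such $\phi$ are conformal images of $\pm\mathrm{id}_{S^2}$, of degree $\pm 1$ and Dirichlet energy exactly $8\pi$; since $x_0 \in \Sigma$ forces $\phi$ nonconstant, $\theta(x_0, 0^+) \geq 8\pi$ at every singular point. Federer's dimension-reduction argument applied to iterated tangent maps yields that $\Sigma$ has Hausdorff dimension zero, and combined with the finiteness of $\int_{B^3}|\nabla u|^2$ one obtains local finiteness of $\Sigma$. The nontrivial degree of $u|_{\partial B_r(x_0)}$ for small $r$ follows by continuity of the topological degree under the strong convergence of the rescalings to $\phi$.

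I expect the hardest step to be the $\epsilon$-regularity: one must construct admissible $S^2$-valued competitors matching given Sobolev boundary traces (Luckhaus' lemma), and the passage to strong convergence on slices needed to transfer minimality to the blow-up limit is delicate. Extending the same strategy from $p = 2$ to general $p \in ]1, \infty[$ (the Hardt--Lin case) is substantially more technical because the $p$-Laplacian degenerates where $|\nabla u|$ vanishes, requiring modified energies and finer slicing arguments, but the architecture of the proof is the same.
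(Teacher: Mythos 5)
You should note at the outset that the paper does not prove Theorem \ref{SU3D} at all: it is quoted from \cite{SU1}, \cite{HL}, \cite{BCL} purely as motivation and as a template for the analogy with vector fields, so there is no internal proof to compare against. Your outline is essentially the classical Schoen--Uhlenbeck/Hardt--Lin/Brezis--Coron--Lieb argument, and it is also, step for step, the blueprint that the paper adapts to its own setting: monotonicity and stationarity (Section \ref{statmon}), $\epsilon$-regularity (Section \ref{eregsec}), a Luckhaus-type interpolation lemma used to transfer minimality to weak limits and blow-ups (Section \ref{luckhaus}), and tangent maps with Federer dimension reduction (Section \ref{secfinal}). So as an account of how the cited theorem is proved, the architecture is correct.

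Two points deserve tightening. First, your final deduction ``dimension zero of $\Sigma$ plus finiteness of the energy gives local finiteness'' is not a valid implication as stated: a dimension-zero set can still accumulate, and the density lower bound $\theta(x,0^+)\geq\epsilon_0$ at singular points does not by itself exclude accumulation, because the balls overlap. The standard closing argument (and the one the paper runs in its own setting, cf. Proposition \ref{singseqmin}, Corollary \ref{schssing} and Theorem \ref{mainthm2}) is: tangent maps have at most an isolated singularity at the origin; if singular points $x_i\to x_0$ accumulated, blow up at $x_0$ at scales comparable to $|x_i-x_0|$, use the strong convergence of minimizers and the upper semicontinuity of the singular set under that convergence to produce a tangent map with a second singular point at distance $\tfrac14$ from the origin, a contradiction. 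You have all the ingredients (strong convergence on good slices, semicontinuity of the density) earlier in your sketch, but the implication should be routed through this compactness argument rather than through an energy count. Second, a minor imprecision: the minimizing tangent maps are exactly $x\mapsto \pm R\,x/|x|$ with $R$ a rotation (degree $\pm1$, density $8\pi$); arbitrary degree-one conformal self-maps of $S^2$ have the same energy but their homogeneous extensions need not be minimizing. This does not affect the degree conclusion you draw, which only needs $\deg\phi=\pm1\neq0$ together with the strong convergence of the rescalings.
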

The analogy of our problem with the one of harmonic maps is also reflected by the fact that in our case the singularities also encode some topology, i.e. they all have a nontrivial degree. We decided however to prove such description of the singularities in a future work, in order not to make the present article too heavy.\\

The careful reader might be tempted to conjecture that our formulation of a minimization problem in terms of vector fields with integer fluxes can be reformulated in terms of harmonic maps. One could for example consider the minimization problem for the pullback $u^*\omega_{S^2}$ of the volume $2$-form on $S^2$, via a map $u\in W^{1,q}(B^3,S^2)$. Since we are dealing with a $2$-form $\omega$, the natural regularity requirement for $u$ corresponding to $u^*\omega\in L^p$ would then be $u\in W^{1,2p}$. This is encouraged by the observation that the range of exponents corresponding to $p\in]1,3/2[$ is $q=2p\in[2,3[$ and gives precisely the Sobolev spaces for which the weak Jacobian $d(u^*\omega)$ of $u$ is assured to be rectifiable and nontrivial.\\
While at the level of function spaces there is no complication in sight, the problem is that the operation $u\mapsto u^*\omega$ is nonlinear in $du$, since $\omega$ is a $2$-form. Therefore there is no reason to think that the minimizing $u$ should give a minimizing $u^*\omega$, or vice versa. In general it is also not clear that $L^p_{\mathbb Z}$-vector fields are in bijective correspondence with forms $u^*\omega$ 
obtained as above from Sobolev functions $u\in W^{1,2p}(B^3,S^2)$. In the (linear in $du$) case of $1$-forms, such representation is proved in \cite{P1}.\\

Our approach roughly follows the strategy of the regularity theory for harmonic maps. As in the harmonic map regularity proof, we derive and make use of a monotonicity formula and a stationarity formula (cfr \cite{HL} and \cite{Price} with our Section \ref{statmon}). In Section \ref{eregsec} we prove an $\epsilon$-regularity result, in Section \ref{lucksec} we describe an analogous of the Luckhaus lemma \cite{Luckhaus}, which helps showing the sequential compactness of minimizers. Then we proceed to the study of tangent maps and to the dimension reduction in Section \ref{secfinal}.\\

 The techniques and results of sections Sections \ref{eregsec} and \ref{lucksec} are quite different from the approaches that we found in the literature, and might shed a different perspective also on the theory of harmonic map regularity. The main new observation is that the $\epsilon$-regularity can be studied on a simple model if we use the fact that the singularities come with an associated integer (the degree, or flux, of our vector field on small spheres surrounding the singularity).\\
The structure that naturally arises is a weighted graph, having vertices that represent the singularities and edges representing the vector field's flow lines. Reducing to this model is allowed by the strong density result of Kessel, proved in \cite{kessel} and summarized in \cite{rivkess}. Its proof follows the strategy used by Bethuel to prove similar results for Sobolev maps into manifolds \cite{Bethuel} \cite{Bethuel2}. The precise result is the following:
\begin{theorem}[\cite{kessel},\cite{rivkess}]\label{densityRK}
Suppose that $\Omega\subset\mathbb R^3$ is an open set. Call $\mathcal R_{\mathbb Z}^\infty(\Omega)$ the class of vector fields defined and smooth outside a finite set $\Sigma=\{a_1,\ldots,a_n\}\subset\Omega$, and having integer fluxes. Then $\mathcal R_{\mathbb Z}^\infty(\Omega)$ is dense in $L_{\mathbb Z}^p(\Omega)$ with respect to the $L^p$-topology.
\end{theorem}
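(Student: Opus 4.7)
The plan is to adapt Bethuel's strong-density scheme \cite{Bethuel},\cite{Bethuel2} to the integer-flux setting, replacing the ``radial projection away from bad cube centres'' used for $S^k$-valued Sobolev maps by a ``Coulomb-singularity insertion at bad cube centres'' adapted to vector fields. Given $X\in L_{\mathbb Z}^p(\Omega)$ and $\Omega'\Subset\Omega$, the idea is to cover $\Omega'$ by a cubic grid $\mathcal G_\epsilon$ of side $\epsilon$, replace $X$ on each cube $Q$ by a vector field $\tilde X_Q$ that is smooth except at the cube centre, matches the normal trace of $X$ on $\partial Q$, and carries the same integer flux; then let $\epsilon\to 0$.

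First I would select a grid. Averaging over translates of $\mathcal G_\epsilon$ and applying Fubini, one picks a grid with
$$
\sum_{Q\in\mathcal G_\epsilon}\int_{\partial Q}|X|^p\,d\mathcal H^2\,\lesssim\,\epsilon^{-1}\int_\Omega |X|^p,
$$
and on which the integer-flux property \eqref{star}, assumed on spheres, transfers to every cube boundary. This transfer uses a slicing/homotopy argument of the type developed in Theorem \ref{closureweakbdl} of \cite{PR1}, relying on the fact that $\partial Q$ can be homotoped to an inscribed sphere along a family of surfaces on which the flux of $X$ stays integer for almost every parameter.

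Next, on each cube $Q$ with centre $x_Q$, set $n_Q:=\int_{\partial Q}X\cdot\nu\in\mathbb Z$, split the trace $X\cdot\nu|_{\partial Q}$ as $\phi_Q^0+n_Q/\mathcal H^2(\partial Q)$ with $\phi_Q^0$ of zero mean, solve the Neumann problem $\Delta u_Q=0$ in $Q$, $\partial_\nu u_Q=\phi_Q^0$ on $\partial Q$, and define
$$
\tilde X_Q\,:=\,\nabla u_Q\,+\,\frac{n_Q}{4\pi}\,\frac{x-x_Q}{|x-x_Q|^3}.
$$
By interior elliptic regularity $\tilde X_Q$ is smooth on $Q\setminus\{x_Q\}$, matches the normal trace of $X$ on $\partial Q$, and its flux through any sphere inside $Q$ is either $n_Q$ or $0$, thus an integer. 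Gluing cube by cube yields a global $\tilde X_\epsilon$ in $\mathcal R_{\mathbb Z}^\infty(\Omega')$: matching normal traces across shared faces guarantee that the only distributional divergence is concentrated at the finite set of centres, so Stokes' theorem forces the flux through any sphere in $\Omega'$ to be the integer sum of the enclosed $n_Q$'s.

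Finally, for $L^p$-convergence, the Coulomb part on $Q$ has norm $\lesssim |n_Q|\epsilon^{3/p-2}$, with exponent strictly positive because $p<3/2$; combined with the trace bound $|n_Q|\lesssim \epsilon^{2(1-1/p)}\|X\|_{L^p(\partial Q)}$ and the grid-selection estimate above, this yields $\sum_Q\|\tilde X_Q-X\|_{L^p(Q)}^p\to 0$ after a preliminary mollification of $X$ that makes the traces meaningful. The harmonic part $\nabla u_Q$ is shown to approximate the divergence-free component of $X|_Q$ by a standard Hodge-decomposition estimate on the cube. The main obstacle is the transfer of the integer-flux condition from spheres to cube boundaries simultaneously with uniform control of the trace integrals on the $2$-skeleton; this is the analogue of the delicate face-by-face control in Bethuel's proof, and it is here that the slicing technology from \cite{PR1} is essential, since a priori $X$ has only $L^p$ regularity and the trace on a fixed $2$-surface is not well defined without averaging.
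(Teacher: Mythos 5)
Note first that the paper does not prove Theorem \ref{densityRK} at all: it is imported from Kessel's thesis \cite{kessel} and from \cite{rivkess}, and the paper only records that the proof there follows Bethuel's cube-decomposition scheme \cite{Bethuel}. Your outline (generic grid with controlled $2$-skeleton, integer point charges inserted at cube centres, $\epsilon\to 0$) is therefore in the same family as the cited argument, but as written it has genuine gaps and does not yet prove the statement.

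The most serious gap is that your glued field is not an element of $\mathcal R^\infty_{\mathbb Z}(\Omega)$. You only match normal traces across faces, so $\tilde X_\epsilon$ has tangential jump discontinuities along the entire $2$-skeleton of the grid (and $\nabla u_Q$ is not even continuous up to $\partial Q$ when the Neumann datum is merely $L^p$); the class in Theorem \ref{densityRK} consists of fields smooth outside a \emph{finite} set, and removing the face singularities without destroying the flux quantization is exactly the delicate interpolation step in the Bethuel/Kessel scheme. Your proposed remedy, a ``preliminary mollification of $X$'', does not help: convolving $X$ destroys \eqref{star}, so the cube fluxes of the mollified field are no longer integers and the whole construction loses its meaning. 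Second, the trace-matching claim is false as stated: the normal trace of $\frac{n_Q}{4\pi}\frac{x-x_Q}{|x-x_Q|^3}$ on the cube boundary is not the constant $n_Q/\mathcal H^2(\partial Q)$, so $\tilde X_Q\cdot\nu\neq X\cdot\nu$ on $\partial Q$ (this one is fixable by solving the Neumann problem with datum $(X-C_Q)\cdot\nu$, where $C_Q$ is the Coulomb field). Third, the convergence argument does not close: combining your Coulomb bound $\lesssim|n_Q|\epsilon^{3/p-2}$ with $|n_Q|\lesssim\epsilon^{2(1-1/p)}\|X\|_{L^p(\partial Q)}$ and the grid selection gives, after summing over cubes, only $\sum_Q\|\cdot\|_{L^p(Q)}^p\lesssim\|X\|_{L^p}^p$, i.e. uniform boundedness rather than smallness, and the claim that $\nabla u_Q$ approximates the divergence-free part of $X|_Q$ by ``a standard Hodge-decomposition estimate'' is unsubstantiated when all you retain of $X$ on $Q$ is the $L^p$ normal trace on $\partial Q$. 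Finally, the transfer of integrality from spheres to cube boundaries cannot be obtained by merely homotoping $\partial Q$ to an inscribed sphere: the intermediate surfaces carry no a priori integrality, and for $p<3/2$ one cannot invoke Stokes for $X$; this step requires the genuine slicing technology of \cite{PR1}/\cite{kessel}, not just an appeal to it.
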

The approximants to a minimizer (as given by Theorem \ref{densityRK}) correspond then to normal $1$-dimensional currents. We are able to associate a weighted graph to vector fields in $\mathcal R_{\mathbb Z}^\infty$, by applying a decomposition result of Smirnov \cite{smirnov} for normal $1$-dimensional currents (see Theorem \ref{thmc}).\\

The $\epsilon$-regularity theorem is then obtained by a combinatorial reasoning on these graphs. It relies on an elementary minimax result (the famous ``maxflow-mincut'' theorem, \cite{FordFulkerson}). See the scheme \eqref{summary} in the next section for a more precise overview of the proof. The same discretization method is the critical step also in the Luckhaus lemma, in Section \ref{luckhaus}.\\

\textbf{Acknowledgements} I wish to thank Tristan Rivi\`ere for introducing me to the subject, for his encouragement, and for his many helpful comments and suggestions. I also thank Luca Martinazzi for some questions which helped clarifying the initial version of the paper.

\section{The $\epsilon$-regularity theorem}\label{eregsec}
In this section our goal is to prove a so-called $\epsilon$-regularity theorem. This result states that if, for an energy minimizer $X$ on a ball $B$ the energy happens to be small enough, then $X$ has no charges inside a smaller ball:
\begin{theorem}[$\epsilon$-regularity]\label{ereg}
 There exists $\epsilon_p>0$ such that for any minimizer $X\in L^p_{\mathbb Z}$ of the $L^p$-energy if
$B^3_r(x_0)\subset B^3$ and
\begin{equation}\label{enbound}
 r^{2p-3}\int_{B_r(x_0)}|X|^p\; d\mathcal H^3<\epsilon_p,
\end{equation}
then 
\begin{equation}\label{zerodiv}
 \op{div}X=0\quad\text{on }B_{r/2}(x_0).
\end{equation}
\end{theorem}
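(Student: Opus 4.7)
The plan is to argue by contradiction, using the density theorem \ref{densityRK}, Smirnov's decomposition of normal $1$-currents into flow lines, and the Ford--Fulkerson max-flow/min-cut theorem, as outlined in the introduction.

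\emph{Scaling and contradiction hypothesis.} Under the transformation $Y(x) := r^2 X(x_0 + rx)$, the class $L^p_{\mathbb Z}$, minimality, and the normalized energy $r^{2p-3}\int_{B_r(x_0)}|X|^p$ are all preserved; hence I may reduce to $x_0 = 0$, $r = 1$. The conclusion $\mathrm{div}\, X = 0$ on $B_{1/2}$ amounts to the vanishing of the (integer) fluxes $\int_{\partial B_\rho(y)} X\cdot\nu$ for a.e. $B_\rho(y)\subset B_{1/2}$, and by integrality it suffices to rule out any such flux of absolute value $\geq 1$. I therefore assume there exist $B_\rho(y) \subset B_{1/2}$ with $\int_{\partial B_\rho(y)} X\cdot\nu = k \in \mathbb Z\setminus\{0\}$, while $\|X\|_{L^p(B_1)}^p < \epsilon_p$ (to be chosen).

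\emph{Approximation and Smirnov flow graph.} By Theorem \ref{densityRK} pick $X_n \in \mathcal R_{\mathbb Z}^\infty(B_1)$ with $X_n \to X$ in $L^p$; after perturbing $\rho$ within a set of full measure and passing to a subsequence, $X_n$ also has flux $k$ through $\partial B_\rho(y)$. Each $X_n$ is smooth away from a finite set $\Sigma_n$ of integer-charged singularities, and Smirnov's decomposition, applied to the normal $1$-current naturally associated to $X_n$, realizes $X_n$ as a superposition of oriented Lipschitz flow lines weighted by a positive measure, with endpoints in $\Sigma_n \cup \partial B_1$. This yields a weighted oriented graph $G_n$ whose edge masses control an $L^1$-type norm of $X_n$ and whose vertex charges match the integer fluxes of $X_n$.

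\emph{Max-flow/min-cut and energy lower bound.} Declare as ``source'' the singularities of $X_n$ in $B_\rho(y)$ (signed total $k$) and as ``sink'' all remaining singularities together with an artificial boundary vertex collecting the flow through $\partial B_1$. The max flow from source to sink in $G_n$ is $\geq |k|$, so by Ford--Fulkerson every separating cut has weight $\geq |k|$. Minimality of $X$ is now used to rule out a ``dipole short-circuit'': two opposite-sign singularities very close together inside $B_{1/2}$ would allow an explicit competitor (cancelling the dipole) with strictly smaller $L^p$-energy, contradicting minimality. Once such configurations are excluded, the flow must genuinely cross every sphere $\partial B_s(y)$ for $s \in (\rho, \mathrm{dist}(y, \partial B_{1/2}))$, giving $\int_{\partial B_s(y)} |X_n|\, d\mathcal H^2 \geq |k|$ for a.e. such $s$. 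Hölder on each sphere then yields
\[
\int_{\partial B_s(y)}|X_n|^p \, d\mathcal H^2 \;\geq\; |k|^p (4\pi s^2)^{1-p},
\]
and integrating in $s$ via coarea (using $p<3/2$, so $s^{2-2p}$ is integrable at $0$) produces a universal lower bound $\int_{B_1}|X_n|^p \geq c_p |k|^p > 0$. Passing $n\to\infty$ and choosing $\epsilon_p < c_p$ gives the contradiction.

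\emph{Main obstacle.} The principal difficulty is precisely the ``minimality versus dipole'' step: translating the abstract min-cut bound into the pointwise-in-$s$ $L^1$ inequality on geometric spheres requires a quantitative variational comparison ruling out arbitrarily small dipole cancellations within $B_{1/2}$, carried out uniformly in the approximation index $n$. Encoding this cleanly on the Smirnov graph $G_n$ and making it compatible with the Hölder/coarea integration is the combinatorial core of the section and the main novelty of the approach.
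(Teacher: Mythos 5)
Your overall scaffolding (density of $\mathcal R^\infty_{\mathbb Z}$, Smirnov's decomposition, the graph encoding, max-flow/min-cut) is the same as the paper's, but the way you close the argument has a genuine gap. The quantitative lower bound you aim for --- flux $k\neq 0$ through some $\partial B_\rho(y)\subset B_{1/2}$ forces $\int_{B_1}|X_n|^p\geq c_p|k|^p$ --- is false for general fields with integer fluxes: a tight dipole (charges $\pm1$ at mutual distance $d$) has energy of order $d^{3-2p}\to 0$ as $d\to 0$ (since $p<3/2$), yet it produces flux $\pm1$ through small spheres around either charge, and such a dipole may also straddle $\partial B_{1/2}$ so that the flux cancels on slightly larger concentric spheres; hence smallness of energy alone cannot rule out nonzero fluxes, and minimality must enter in an essential, non-quantitative way. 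Your proposed repair is to invoke minimality of $X$ to exclude ``dipole short-circuits'', but the dipoles in question are singularities of the approximants $X_n$, which are \emph{not} minimizers, so minimality of $X$ says nothing directly about them. To use minimality you must turn the graph surgery on the approximants into an actual admissible competitor for $X$: it must remain in $L^p_{\mathbb Z}$, keep the boundary datum of $X$, and survive a passage to the limit. This is exactly the machinery your outline omits and the paper supplies: one first selects a good sphere $\partial B_\rho$ with $\rho>1/2$, $\int_{\partial B_\rho}|X|<1$, hence (by integrality) zero net flux; Proposition \ref{regcase} (where max-flow/min-cut is actually used, in the opposite direction from yours: to cancel \emph{all} interior charges by elementary operations without increasing energy, strictly decreasing it if charges are present); then the weak closure theorem of \cite{PR1} (this is where $p>1$ enters) yields a limit competitor $\bar X\in L^p_{\mathbb Z}$ with the same boundary values, and minimality of $X$ forces equality of the energies.

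Note also that even after this comparison the paper still has to work to get \eqref{zerodiv} for $X$ itself rather than for the competitor: strong convergence gives $|X|=|\bar X|$ a.e., then Lemma \ref{minforms} and Proposition \ref{regularityresult} make $\bar X$ H\"older continuous, so $X\in L^\infty_{\mathbb Z}$ locally, and finally approximation in $L^q$ with $q>3/2$ (where integer-flux fields have no charges) gives $\operatorname{div}X=0$ on $B_{r/2}$. So the theorem is not proved by contradiction against an energy lower bound at all; it is proved by constructing a competitor and bootstrapping regularity. Your ``main obstacle'' paragraph correctly locates the difficulty, but the mechanism you propose there does not fill it.
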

The main steps of the proof can be summarized as follows (see the scheme \ref{summary} below).
\begin{itemize}
 \item  We first approximate our vector field $X\in L^p_{\mathbb Z}$ strongly in $L^p$-norm by some smoother vector field $\tilde X$ as in Theorem \ref{densityRK}. 
\item To $\tilde X$ we associate a $1$-current $T_{\tilde X}$ in a classical way, and we apply to $T_{\tilde X}$ a decomposition result due to Smirnov \cite{smirnov} (see also the recent development \cite{paolstep}). This result says that a normal current like $T_{\tilde X}$ can be decomposed via a measure (on Borel sets for the weak topology) $\mu_{\tilde X}$ into a superposition of rectifiable integral currents supported on Lipschitz curves starting and ending on the boundary of $T_{\tilde X}$. This result is described in Section \ref{sec:smirnov}.
\item Smirnov's decomposition $\mu_{\tilde X}$ in our case (since the boundary $\partial T_{\tilde X}$ is supported on a discrete set) gives rise to a weighted directed graph $G_{\tilde X}$, by grouping together the curves in the support of $\mu_{\tilde X}$ with the same starting and ending point. These constructions are performed in Section \ref{sec:graphs}.
\item We define a way of perturbing $G_{\tilde X}$ into another graph $G'$. For the underlying vector fields, this corresponds to perturbing $\tilde X$ into a vector field $X'$ that is (not smooth but) still in $L^p_{\mathbb Z}$, and has energy bounded by the energy of $\tilde X$. We call these modifications \emph{elementary operations} (see the definitions at the beginning of Section \ref{sec:graphs}), and we use the same notation for operations on the graph $G_{\tilde X}$ and on the corresponding vector field $\tilde X$.
\item If $\tilde X$ has little energy on a ball $B$, then we can perturb it by elementary operations into another vector field $X'$ as above, and which has no charges inside $B$. This uses the classical ``max flow/min cut'' theorem on the graph $\tilde G$ (see Section \ref{sec:endpf}). 
\item Finally, as the vector fields $\tilde X$ approximate better and better the minimizer $X$, since $p>1$ we can apply the results of \cite{PR1} and extract a subsequence of the perturbed $X'$ that converge weakly to a competitor for $X$. The comparison of $X$ with the competitor gives a contradiction unless $X$ has no charges in $B$, proving the result (see Section \ref{comparisonarg}).
\end{itemize}
\begin{equation}\label{summary}
\xymatrix{
*++[F]{X\in L^p_{\mathbb Z}} \ar@/^2pc/[r]^-*+\txt{approxi-\\mation}&*+[F-:<3pt>]{\tilde X\in\mathcal R_\infty}\ar@{<->}[r] &*+[F-:<3pt>]{T_{\tilde X}}\ar@/^/[drr]^-*+\txt{Smirnov's \\ decomposition}&&\\
*++[F=]{\txt{competitor}}&*+[F-:<3pt>]{X'\in L^p_{\mathbb Z}}\ar@/^2pc/[l]&&&*+[F-:<3pt>]{\mu_{\tilde X}\txt{\\measure on\\rect. lip. curves}}\ar[d]\\
&*+[F-:<3pt>]{\txt{ \\perturbed\\graph}\quad G'}\ar[u]&&&*+[F-:<3pt>]{\txt{ \\weighted\\directed graph}\quad G_{\tilde X}}\ar[lll]_-*+\txt{elementary\\ operations}\\
}
\end{equation}

\subsection{Smirnov's decomposition of $1$-dimensional normal currents}\label{sec:smirnov}
We build our constructions upon Smirnov's decomposition result for $1$-dimensional normal currents \cite{smirnov}. In order to state the results that we use, we need some preliminaries.\\
\begin{definition}
 A $1$-current $T$ in $\mathbb R^3$ is called an \emph{elementary solenoid} if there exists a $1$-Lipschitz function $f:\mathbb R\to\mathbb R^3$ with $f(\mathbb R)\subset \op{spt}(T)$, such that $f,T$ satisfy
\begin{eqnarray*}
 T&=&\mathcal D-\lim_{T\to\infty}\frac{1}{2T}f_\#\overrightarrow{[-T,T]},\\
\mathbb M(T)&=&1.
\end{eqnarray*}
\end{definition}
In the spirit of the above definition, we can identify an oriented Lipschitz curve with a $1$-dimensional rectifiable current. We call $\mathcal C_\ell$ the set of all oriented curves of length $\leq\ell$, which we endow with the weak topology. All measures on paths described in this section will be positive, $\sigma$-finite measures, Borel with respect to the weak topology. The corresponding integrals are understood in the weak sense, i.e. 
$$
S=\int_{\mathcal C_\ell}Rd\mu(R)\text{ is the current defined by }S(\phi)=\int_{\mathcal C_\ell}R(\phi)d\mu(R)\text{ for }\phi\in\mathcal D^1(\mathbb R^3). 
$$
\begin{definition}\label{def:totdec}
 We say that a $1$-current $T$ is \emph{decomposed} into currents lying in a set $J\subset \mathcal D_{1,loc}(\mathbb R^3)$ if there is a Borel measure $\mu$ supported on $J$ such that
\begin{eqnarray*}
 T&=&\int_JRd\mu(R),\\
||T||&=&\int_J||R||d\mu(R).
\end{eqnarray*}
$T\in\mathbb N_{1,loc}(\mathbb R^3)$ is \emph{totally decomposed} if the same $\mu$ also decomposes the boundary:
\begin{eqnarray*}
 \partial T&=&\int_J\partial R d\mu(R),\\
||\partial T||&=&\int_J||\partial R||d\mu(R).
\end{eqnarray*}
\end{definition}
\begin{theorem}
 If $\ell>0$, $T\in \mathcal D_1(\mathbb R^3),\partial T=0$, then $T$ can be decomposed into elements of $\mathcal C_\ell$, with a measure $\mu$ of total mass $\mathbb M(T)/\ell$. Moreover, the following relations hold in the sense of measures:
\begin{eqnarray*}
 \frac{2}{\ell}||T||&\geq&\int_{\mathcal C_\ell}||\partial R||d\mu(R),\\
\frac{1}{\ell}||T||&=&\int_{\mathcal C_\ell}\delta_{b(R)}d\mu(R)=\int_{\mathcal C_\ell}\delta_{e(R)}d\mu(R),
\end{eqnarray*}
where $b(R), e(R)$ are the start and end points of $R$, respectively.
\end{theorem}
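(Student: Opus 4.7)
The plan is to combine Smirnov's main decomposition of a normal $1$-current into elementary pieces with a simple cutting construction that slices every elementary solenoid into arcs of length $\ell$, then integrate the resulting "per-solenoid" measures against the Smirnov measure.

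First I would apply Smirnov's main theorem to the cycle $T$. Since $\partial T=0$, every elementary piece that appears in the decomposition must be a solenoid (the arc pieces have boundary). This gives a Borel measure $\lambda$ on the space $\m{S}$ of elementary solenoids such that
\[
T=\int_{\m{S}}S\,d\lambda(S),\qquad \|T\|=\int_{\m{S}}\|S\|\,d\lambda(S),\qquad \lambda(\m{S})=\mb{M}(T),
\]
where each $S\in\m{S}$ has $\mb{M}(S)=1$ and admits a $1$-Lipschitz parametrization $f_S:\mb{R}\to\mb{R}^3$ (which can be chosen in a measurable way from the Polish structure on $\m{S}$). The asymptotic normalization $\mb{M}(S)=1$ together with $|f_S'|\leq 1$ forces $|f_S'|=1$ a.e.\ in the ergodic sense needed below.

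Second, I would cut each solenoid. For each $u\in\mb{R}$ set $R_{S,u}:=(f_S)_\#\overrightarrow{[u,u+\ell]}\in\m{C}_\ell$, and define the measure
\[
\mu_S:=\lim_{T\to\infty}\frac{1}{2T\ell}\int_{-T}^{T}\delta_{R_{S,u}}\,du
\]
on $\m{C}_\ell$, the limit existing in the weak sense directly from the defining limit of $S$. Its total mass is $1/\ell$. A Fubini computation for a test form $\phi\in\m{D}^1(\mb{R}^3)$ yields
\[
\int_{\m{C}_\ell}R(\phi)\,d\mu_S(R)=\lim_{T\to\infty}\frac{1}{2T\ell}\int_{-T}^{T}\!\!\int_{u}^{u+\ell}\phi(f_S(s))\cdot f_S'(s)\,ds\,du=S(\phi),
\]
after swapping the order and discarding the $O(\ell)$ boundary contribution; the analogous swap proves $\|S\|=\int \|R\|\,d\mu_S(R)$. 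Since $b(R_{S,u})=f_S(u)$ and $e(R_{S,u})=f_S(u+\ell)$, the pushforwards satisfy
\[
b_\#\mu_S=e_\#\mu_S=\lim_{T\to\infty}\frac{1}{2T\ell}\int_{-T}^{T}\delta_{f_S(u)}\,du=\tfrac{1}{\ell}\|S\|,
\]
the last equality being the very definition of the mass measure $\|S\|$ of the solenoid.

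Third, I would assemble $\mu:=\int_{\m{S}}\mu_S\,d\lambda(S)$, a Borel measure on $\m{C}_\ell$ of total mass $\mb{M}(T)/\ell$, and integrate each of the identities from the previous step against $\lambda$ to obtain $T=\int R\,d\mu(R)$, $\|T\|=\int \|R\|\,d\mu(R)$ and $\int\delta_{b(R)}\,d\mu(R)=\int\delta_{e(R)}\,d\mu(R)=\frac{1}{\ell}\|T\|$. The boundary estimate then follows from the pointwise inequality of measures $\|\partial R\|\leq \delta_{b(R)}+\delta_{e(R)}$ (with strict inequality precisely when $R$ closes up, in which case $\partial R=0$): integrating gives $\int\|\partial R\|\,d\mu(R)\leq\frac{2}{\ell}\|T\|$.

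The main technical obstacle I anticipate is twofold. One has to make a measurable choice of the parametrization $S\mapsto f_S$ so that $\mu_S$ depends measurably on $S$ and the integral $\int\mu_S\,d\lambda(S)$ genuinely makes sense as a Borel measure on $\m{C}_\ell$; this requires a careful use of the Polish topology on $\m{S}$ and of Smirnov's measurable selection framework. One also has to justify the existence of the time-averaged limit defining $\mu_S$ as a Borel measure (rather than merely as a distributional limit), which rests on the ergodic/Birkhoff-type content already built into the definition of an elementary solenoid. Once these measurability issues are settled, the verification of all three claimed relations is a straightforward Fubini calculation as sketched above.
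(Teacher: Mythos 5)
You should first be aware that the paper contains no proof of this statement: it is quoted from Smirnov \cite{smirnov}, and the sentence immediately following it says that it is cited only because, \emph{in Smirnov's paper}, it is the basic ingredient for the next two theorems (the decomposition of cycles into elementary solenoids, and Theorem \ref{thmc}), which are the results actually used later. This is exactly where your argument runs into trouble: you deduce the bounded-length decomposition from the solenoid decomposition, but in the only available source the logical order is the reverse --- the solenoid decomposition is obtained (via a Birkhoff-type ergodic argument) \emph{from} the statement you are asked to prove. So, unless you supply an independent proof of the solenoid decomposition, your proposal is circular relative to the literature it leans on: it derives the statement by assuming a result whose known proof already contains it. Smirnov's own proof of the bounded-length decomposition is a direct construction for boundaryless normal $1$-currents and does not pass through solenoids.

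Even granting the solenoid decomposition as a black box, several steps are thinner than you acknowledge. The limit defining $\mu_S$ does not exist ``directly from the defining limit of $S$'': the definition of an elementary solenoid gives convergence of the currents $\frac{1}{2\tau}(f_S)_\#\overrightarrow{[-\tau,\tau]}$, not of the curve-valued empirical measures $\frac{1}{2\tau\ell}\int_{-\tau}^{\tau}\delta_{R_{S,u}}\,du$ on $\mathcal C_\ell$; at best you get tightness and subsequential weak-$*$ limits, and you must then either prove uniqueness of the limit or make a measurable choice of a limit point for each $S$, simultaneously with the measurable choice of the parametrization $f_S$ --- this is the analytic core of your construction and it is left unproved. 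Similarly, $b_\#\mu_S=e_\#\mu_S=\frac{1}{\ell}\|S\|$ is not ``the very definition'' of the mass measure: it requires that the mass measures of the approximating currents converge weakly-$*$ to $\|S\|$, which holds only because their total masses converge to $\mathbb M(S)=1$ (lower semicontinuity of mass combined with the $1$-Lipschitz bound), an argument that must be made explicitly; the same convergence-of-mass fact is what justifies $\|S\|=\int\|R\|\,d\mu_S(R)$, i.e.\ the absence of cancellation. These gaps are plausibly fixable, but as written the proposal both inverts the logical dependency of the results it cites and leaves the existence and measurability of the family $(\mu_S)$ unestablished.
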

We do not use the above theorem, but we cite it because in \cite{smirnov} it is the basic ingredient for the next two theorems, which we rely upon. Using Birkhoff's theorem (in the appropriate setting), Smirnov proves the following decomposition result.
\begin{theorem}
 $T\in \mathcal D_1(\mathbb R^3),\partial T=0$, then $T$ can be decomposed in elementary solenoids.
\end{theorem}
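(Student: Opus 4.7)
The plan is to bootstrap from the preceding theorem (decomposition of a closed $1$-current $T$ into curves of length $\leq\ell$) to a decomposition into infinite-length objects, the elementary solenoids. The mechanism is ergodic-theoretic: short curves play the role of finite orbit segments, while elementary solenoids correspond to ergodic invariant measures of a time-translation flow on the space of parametrized paths. This is why the theorem's preamble mentions Birkhoff's theorem.

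First I would set up the dynamical system. Let $P$ be the space of $1$-Lipschitz maps $f:\mathbb R\to\mathbb R^3$, topologized by uniform convergence on compacts, on which the additive group $\mathbb R$ acts by time translation $(\sigma_t f)(s)=f(s+t)$. By the very definition given in the paper, an elementary solenoid is the current-theoretic incarnation of an ergodic $\sigma_t$-invariant probability measure on $P$: the Cesaro limit $\frac{1}{2T}f_\#\overrightarrow{[-T,T]}$ coincides, by Birkhoff's pointwise ergodic theorem applied to $f\mapsto f_\#\overrightarrow{[0,1]}$, with the push-forward current associated to such an ergodic measure, and the normalization $\mathbb M(T)=1$ is inherited from the fact that the speed is bounded by $1$ while Birkhoff averages converge to the space average.

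Next I would use the preceding theorem at a sequence $\ell_n\to\infty$ to obtain decomposing measures $\mu_n$ on $\mathcal C_{\ell_n}$, each yielding $T=\int R\,d\mu_n(R)$ and $\|T\|=\int\|R\|\,d\mu_n(R)$. After parametrizing each curve by arclength centered at its midpoint and extending it constantly (or, better, by gluing endpoints using the almost-closed property $\frac{2}{\ell_n}\|T\|\geq\int\|\partial R\|\,d\mu_n(R)$ from the preceding theorem, whose right-hand side tends to $0$), I lift each $\mu_n$ to a measure $\tilde\mu_n$ on $P$, rescaled so that $\tilde\mu_n$ has total mass $\mathbb M(T)$. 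Time translation moves the parametrization by an amount negligible compared to $\ell_n$, so any weak-$\ast$ accumulation point $\tilde\mu$ is $\sigma_t$-invariant. Applying the ergodic decomposition theorem to $\tilde\mu$ expresses it as an integral of ergodic $\sigma_t$-invariant probability measures, and the associated currents are elementary solenoids; this produces the desired measure $\mu$ on the set of elementary solenoids.

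Finally I would verify the two identities of Definition \ref{def:totdec}. Pairing with a test form $\phi\in\mathcal D^1(\mathbb R^3)$ and passing to the limit in the identities for $\mu_n$, combined with Birkhoff's pointwise convergence on $\tilde\mu$-a.e.\ trajectory, yields $T=\int R\,d\mu(R)$; the mass identity $\|T\|=\int\|R\|\,d\mu(R)$ is obtained analogously by testing against nonnegative Borel functions and using lower semicontinuity together with the mass identity already available at each finite stage. The step I expect to be the main obstacle is the lift/gluing from $\mathcal C_{\ell_n}$ into the space of full trajectories $P$: one has to arrange it so that no spurious mass accumulates near the endpoints of each curve, so that the weak limit $\tilde\mu$ is genuinely $\sigma_t$-invariant, and so that the sharp mass equality $\mathbb M(T)=\int\mathbb M(R)\,d\mu(R)$ is preserved. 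This is precisely the point where Smirnov's careful use of Birkhoff's theorem, in the "appropriate setting" alluded to in the excerpt, is essential.
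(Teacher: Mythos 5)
The paper does not actually prove this statement: it is quoted from Smirnov \cite{smirnov}, with only the remark that it follows from the preceding finite-length decomposition via Birkhoff's theorem ``in the appropriate setting''. Your outline is faithful to exactly that route, so the comparison is with Smirnov's argument rather than with anything written here; and at that level your proposal is a strategy sketch with genuine unresolved gaps, one of which you name yourself without filling it.

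Concretely, two points carry essentially all the difficulty and are not established. First, the identification of ergodic $\sigma_t$-invariant measures on the path space $P$ with elementary solenoids is not automatic: Birkhoff (applied after pairing with a countable dense family of test $1$-forms) gives convergence of $\frac{1}{2T}f_\#\overrightarrow{[-T,T]}$ for a.e.\ trajectory, but the limit current has mass $\leq 1$, and strict inequality occurs whenever the trajectory cancels against itself (retraces arcs with opposite orientation) or is not genuinely arclength-parametrized -- the constant extensions you propose at the endpoints create exactly such defective pieces, and ergodic components supported on them give currents of mass $<1$ or $0$, which are not elementary solenoids. Since $\mathbb M(R)=1$ is part of the definition, ``speed bounded by $1$ plus Birkhoff'' only yields an inequality, not the normalization. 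Second, the sharp identity $\|T\|=\int\|R\|\,d\mu(R)$: the inequality $\|T\|\leq\int\|R\|\,d\mu$ is automatic from $T=\int R\,d\mu$, and the whole content of ``decomposed'' is the reverse inequality, i.e.\ absence of cancellation between (and along) solenoids. Weak-$\ast$ convergence of your lifted measures $\tilde\mu_n$ and lower semicontinuity give estimates in the unhelpful direction, so you need uniform control guaranteeing that no mass of $\|T\|$ concentrates near the endpoints of the curves in $\mathcal C_{\ell_n}$, that the gluing creates no cancellation, and that the limit measure is exactly (not just approximately) translation invariant. This is precisely the step you flag as ``the main obstacle'' and leave open; it is the heart of Smirnov's proof, so as it stands the proposal reconstructs the announced strategy but does not constitute a proof.
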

For the case $\partial T\neq 0$ there holds instead:
\begin{theorem}\label{thmc}
 If $T\in\mathbb N_{1,loc}(\mathbb R^3)$ then $T$ can be decomposed as follows:
\begin{eqnarray*}
 T&=&P+Q, \\
||T||&=&||P||+||Q||,\\
\partial T&=&\partial Q,\partial P=0.
\end{eqnarray*}
moreover $Q$ can be totally decomposed into simple curves of finite length, i.e. into elements of $\mathcal C_\infty:=\cup_{\ell>0}\mathcal C_\ell$.
\end{theorem}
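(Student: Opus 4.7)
My plan is to prove the theorem by a greedy/maximal extraction of path-like pieces from $T$, leaving a boundaryless residue which can then be handled by the preceding cycle decomposition theorem. The Hahn decomposition $\partial T = (\partial T)_+ - (\partial T)_-$ gives two locally finite positive Radon measures on $\mathbb{R}^3$, and the natural guess is that the ``positive'' boundary mass and ``negative'' boundary mass are connected through $T$ by integral-current paths, while whatever cannot be so connected must come from cycles.

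To formalize this, first I would introduce the class $\mathcal{A}$ of Borel measures $\mu$ on $\mathcal{C}_\infty$ such that the current $T_\mu := \int_{\mathcal{C}_\infty} R\, d\mu(R)$ is totally decomposed by $\mu$ in the sense of Definition \ref{def:totdec}, and such that the difference $T - T_\mu$ is again a locally normal current satisfying the mass-additivity $\|T\| = \|T_\mu\| + \|T - T_\mu\|$ together with the analogous additivity for the positive/negative parts of the boundary. Ordering $\mathcal{A}$ by $\mu_1 \leq \mu_2$ iff $\mu_2 - \mu_1$ is an admissible decomposition of $T - T_{\mu_1}$, the zero measure lies in $\mathcal{A}$, and any totally ordered chain has a supremum realized by the supremum measure (with mass and boundary additivity preserved through monotone convergence of Radon measures). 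Zorn's lemma then produces a maximal element $\mu^*$, and I set $Q := T_{\mu^*}$ and $P := T - Q$, so that by construction $\|T\| = \|P\| + \|Q\|$ and $Q$ is totally decomposed into simple curves of finite length.

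The main content of the proof is then the claim $\partial P = 0$. Suppose not; then after Hahn decomposition of $\partial P$ one has a nontrivial positive part $(\partial P)_+$ and negative part $(\partial P)_-$ with equal local mass. The goal is to ``cap off'' a piece of $P$ into a boundaryless current, apply the preceding decomposition theorem for $\partial T = 0$ to extract at least one simple arc joining $\text{spt}(\partial P)_+$ to $\text{spt}(\partial P)_-$ through $\text{spt}(P)$, and thereby enlarge $\mu^*$, contradicting maximality. Concretely, one fixes a compact subregion where $(\partial P)_\pm$ both have positive mass, joins those two pieces of boundary by an auxiliary integral current $S$ of finite mass (e.g.\ by an elementary transport plan between the finite positive measures $(\partial P)_+$ and $(\partial P)_-$), so that $P + S$ is a cycle to which the boundaryless decomposition theorem applies, yielding a measure $\nu$ on elementary solenoids. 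At least one of these solenoids must cross $S$ with nonzero $\nu$-mass (otherwise $S$ itself would be a boundary, which contradicts the Hahn decomposition). Cutting such a solenoid along $S$ gives a genuine simple arc inside $\text{spt}(P)$ from $(\partial P)_+$ to $(\partial P)_-$, and a small positive multiple of this arc is admissibly subtractible from $P$, producing the desired enlargement of $\mu^*$.

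The main difficulty I expect is in this last step: making the ``cut a solenoid along $S$'' construction measure-theoretically precise while preserving total decomposition and the mass-additivity on the residual current. This is where Smirnov's machinery really bites, relying on an infinite-dimensional analogue of the Birkhoff decomposition together with careful bookkeeping of the measure $\mu$ on path space under restriction and concatenation operations; the rest of the argument (Hahn decomposition, Zorn, the auxiliary transport current $S$) is essentially standard once the extraction lemma is in hand.
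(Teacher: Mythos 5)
The first thing to say is that the paper contains no proof of Theorem \ref{thmc} at all: it is quoted verbatim from Smirnov \cite{smirnov} (together with the two preceding decomposition theorems), and the author explicitly relies on it as an external result. So your attempt is not competing with an in-paper argument but with Smirnov's paper itself, and judged on that basis it has a genuine gap: the step you yourself flag as the ``main difficulty'' --- extracting a simple arc from $\op{spt}(P)$ joining $(\partial P)_+$ to $(\partial P)_-$ in a way that can be \emph{mass-additively} subtracted from $P$ --- is precisely the content of Smirnov's theorem, and the construction you propose for it does not work as stated. If you close $P$ up with an auxiliary transport current $S$ (with $\partial S=-\partial P$) and decompose the cycle $P+S$ into elementary solenoids, you decompose $P+S$, not $P$ and $S$ separately: there can be cancellation of mass between $P$ and $S$, so the solenoids need not consist of ``bits of $P$'' at all. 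Moreover elementary solenoids are not closed curves but normalized limits $\frac{1}{2T}f_\#\overrightarrow{[-T,T]}$, so ``cutting one along $S$'' does not produce finitely many arcs, and the pieces you do obtain have endpoints on $\op{spt}(S)$ (generically interior points of the transport curves), not on $\op{spt}(\partial P)_\pm$. Finally, even granted an arc $\sigma\subset\op{spt}(P)$ with the right endpoints, subtracting $\epsilon\sigma$ from $P$ need not satisfy $\|P\|=\epsilon\,\mathbb M(\sigma)+\|P-\epsilon\sigma\|$: containment of supports gives no alignment of orientations nor a lower density bound for $\|P\|$ along $\sigma$, and this additivity is exactly what ``decomposition'' means here, so it cannot be assumed in the extraction step without circularity.

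There are also secondary points that would need repair: the Zorn argument over uncountable chains of Borel measures on path space is delicate (a countable maximizing sequence for the extracted mass, bounded by $\|T\|$, is the standard and safer device); and since $T$ is only \emph{locally} normal, $(\partial P)_\pm$ are merely locally finite, so a global finite-mass transport $S$ between them need not exist, while restricting to a compact region creates new boundary that destroys your bookkeeping. Your overall architecture (maximal extraction of a totally decomposed part, then a contradiction argument showing the residue is a cycle) is a reasonable skeleton and not far in spirit from how such results are organized, but the engine of the proof --- Smirnov's extraction of curves with exact mass and boundary additivity, via the bounded-length decomposition and the Birkhoff-type argument cited just above Theorem \ref{thmc} --- is missing rather than replaced. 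For the purposes of this paper the correct move is simply to cite \cite{smirnov}, as the author does.
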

\begin{rmk}\label{obs}
We now note some facts that follow easily from the constructions of Smirnov, but are not explicitly stated in his paper:
\begin{enumerate}
\item\label{1obs} In the total decomposition of $Q$ above, the paths have in general unbounded (finite) lengths, but almost all of them (w.r.t. the decomposing measure $\mu$) have $b(R), e(R)$ on the support of $\partial T=\partial Q$. 
\item\label{2obs} If $T$ corresponds to a regular vector field (i.e. for all test forms $\omega$, $T(\omega)=\int\omega(X)d\mathcal L^3$ and $X$ is regular), then the paths are composed of pieces of trajectories of the flow of $X$.
\item\label{3obs} The functions $b, e:\mathcal C_\infty\to\mathbb R^3$ are continuous for the weak topology. In particular, given two Borel sets $A,B\subset \mathbb R^3$, the set of paths 
$$
\{R:\:\mathbb M(R)<\infty, b(R)\in A, e(R)\in B\}
$$ 
is Borel for the weak topology.
\item\label{4obs} Suppose that a $1$-current $T$ decomposes via a measure $\mu$ on the space of $1$-currents. If $\alpha$ is a bounded Borel function on $\mathcal D_1(\mathbb R^3)$, then $\nu=\alpha\mu$ induces by integration a $1$-current $T_\alpha$ that is totally decomposed via $|\alpha|\mu$, and satisfies 
$$
\overrightarrow T_\alpha=\pm\overrightarrow T\text{ and }||T_\alpha||\leq ||\alpha||_{L^\infty(\mu)}||T||. 
$$
Indeed, this is true for step functions $\alpha$, and $L^1$-convergence at the level of the decomposition induces weak convergence at the level of the decomposed currents.
\item\label{5obs} The same result as above holds also in the case of a totally decomposed current $T$, with the analogous inequality holding also for the boundaries:
$$
||\partial T_\alpha||\leq ||\alpha||_{L^\infty(\mu)}||\partial T||.
$$
\end{enumerate}
\end{rmk}
\subsection{Encoding the useful information in a graph}\label{sec:graphs}
For vector fields $  X\in \mathcal R^\infty_{\mathbb Z}(\Omega)$ the decomposition of Smirnov allows to group the integral trajectories of $ X\res\Omega$ according to their start and end points: a generic trajectory could start or end on $\partial \Omega$ or on one of the ``charges'' (i.e. singularities) of $X$. We encode this information in a weighted directed graph (i.e. a graph such that to each edge a positive number called ``weight'' and a direction are assigned). The weights in our encoding graphs keep track of how much of the flux of $X$ is carried by each group of trajectories, and the direction of an edge encodes the direction of the corresponding trajectories. The grouping is done in such a way that there are no flux cancellations within the same group. Thus specifying the flux for a group of trajectories automatically gives a measure of the norm of the restriction of $X$ to those trajectories.

\subsubsection{Elementary operations}
The following kind of operations will be the ones that we perform on our encoding graphs:
\begin{definition}
 An \emph{elementary operation} on a directed weighted graph $G$ consists of multiplying by a factor $\alpha\in[-1,1]$ the weight of an edge, where multiplication of the weight by a negative factor $\alpha<0$ means inverting the orientation and multiplying by $|\alpha|$.\\
We indicate by $G\preceq G'$ the statement that $G$ is achieved from $G'$, after applying finitely many elementary operations.
\end{definition}
We now define the elementary operations on the underlying $X\in\mathcal R_{\mathbb Z}^\infty(\Omega)$. We use the same name because the two definitions correspond to each other in a natural way, as described in Section \ref{sec:assocgraph}.
\begin{definition} Consider $  X\in\mathcal R^\infty_{\mathbb Z}(\Omega)$, which we identify with a current $T=T_{  X}$ as in Remark \ref{obs} \eqref{2obs}, and to which we associate $P,Q$ and a measure $\mu$ totally decomposing $Q$ as in Theorem \ref{thmc}. An \emph{elementary operation} on $  X$ consists in replacing $  X$ by the vector field corresponding to $(T_{  X})_\alpha$ obtained as in Remark \ref{obs} \eqref{5obs}, for some function $\alpha$ that only takes values in $[-1,1]$ and that is piecewise constant on a family of sets defined via $b,e$ as in Remark \ref{obs} \eqref{3obs}.\\
We indicate by $  X\preceq   X'$ the property of $  X$ of being achievable after performing finitely many elementary operations starting from $  X'$.
\end{definition}

\begin{rmk}
\begin{enumerate}
 \item It is immediate form Remark \ref{obs} \eqref{3obs} that $  X\preceq   X'$ implies $||  X||_{L^p}\leq||  X'||_{L^p}$ with strict inequality unless $|\alpha|=1$ in all of our elementary operations.
 \item $\mathcal R^\infty_{\mathbb Z}$ is not invariant under elementary operations, since such operations often create jumps in $  X$. In general also the integer divergence condition is not preserved by these modifications.
\item From Remark \ref{obs} \eqref{5obs} it follows however, that for $  X\in\mathcal R^\infty_{\mathbb Z}\cap L^p(\Omega)$, any elementary operation sends $  X$ to a vector field $  X'\in L^p(\Omega)$ having zero divergence away from the singular set of $  X$.
\end{enumerate}
\end{rmk}

\subsubsection{Grouping trajectories of $X\in \mathcal R_{\infty}\cap L^1(\Omega)$}

\begin{figure}[htp]
\centering
\scalebox{0.5}{\input{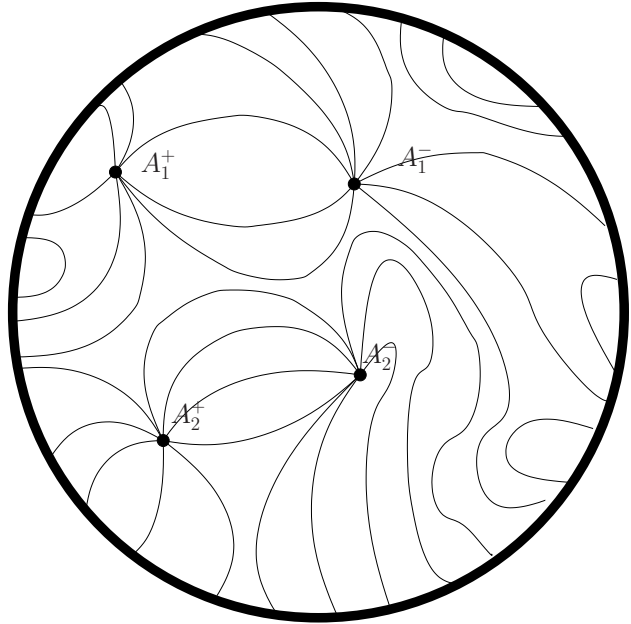}}
\caption{We represent schematically (i.e. we forget for a moment that we are in a $3$-dimensional setting, and we take $\Omega$ to be a ball) the finitely many charges of our vector field $X\in\mathcal R_{\infty}\cap L^1(\Omega)$ as black dots, and some of the supports of the rectifiable currents $R$ of Definition \protect\ref{def:totdec}, as thin lines.}\label{fig:lines}
\end{figure}

Consider $X\in\mathcal R_{\infty}\cap L^1(\Omega)$ and the normal $1$-current $T_X$ as in Remark \ref{obs} \eqref{2obs}.\\
 
Using Theorem \ref{thmc}, we can find a decomposition $T_X=P_X+Q_X$ and a measure $\mu_X$ on $\mathcal C_\infty:=\cup_{\ell>0}\mathcal C_\ell$ that totally decomposes $Q_X$ into finite-length simple paths.\\
Then note that, due to the special structure of $X$, $\partial(T_X\res\Omega)$ is supported on $\partial\Omega\cup\{\text{charges of }X\}$. Also, by the total decomposition property of $Q_X$, there holds
$$
\partial (T_X\res B)=\int_{\mathcal C_\infty}\partial Rd\mu_X(R)=\int_{\mathcal C_\infty}(\delta_{e(R)} - \delta_{b(R)})d\mu_X(R)
$$
and  $b(R),e(R)\in\op{spt}\partial (T_X\res B)$ for $\mu_X$-a.e.$R$, so that we can decompose the set of finite length paths into disjoint Borel sets:
$$
\mathcal C_\infty=C\cup\bigcup_{i,j=0}^nC_{ij},
$$ 
where $\mu_X(C)=0$ and for all $R\in C_{ij}$ there holds
$$
b(R)\in A_i^-,e(R)\in A_j^+,
$$
where
$$
 A_0^\pm:=\partial\Omega\cap\{\op{sgn}(X\cdot\nu_\Omega)=\pm1\}
$$
and
$$
 A_i^\pm,i>0\text{ enumerate the }\pm-\text{charges of }X,\text{ possibly with repetitions.}
$$
By the decomposition theorem \ref{thmc}, if 
$$
C_i^-=\cup_{j=0}^n C_{ij}, C_j^+=\cup_{i=0}^n C_{ij},
$$ 
then 
$$
\mu_X(C_i^+)=\sum_{j=0}^n\mu_X(C_{ij}),\quad \mu_X(C_j^-)=\sum_{i=0}^n\mu_X(C_{ij}),
$$
and for $i>0$ it is clear that $\mu_X(C_i^\pm)$ is equal to the charge of $A_i^\pm$ (see also Figure \ref{fig:lines2}).

\begin{figure}[htp]
\centering
\scalebox{0.5}{\input{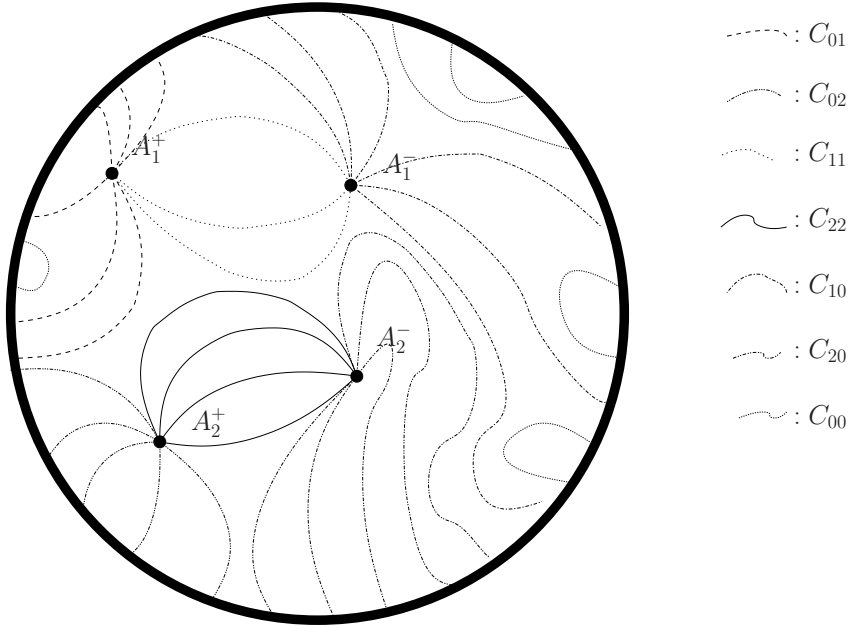}}
\caption{In the example of Figure \protect\ref{fig:lines}, we represent with different patterns the supports of curves belonging to different $C_{ij}$'s. We omit the set $C_{ij}$ if it has $\mu_X(C_{ij})=0$.}\label{fig:lines2}
\end{figure}

\subsubsection{Associating a graph to a vector field}\label{sec:assocgraph}
With the notations of the previous subsection, we associate to $X$ the graph $G_X$ (see Figure \ref{fig:graph2}) which has the following features:
\begin{itemize}
\item has vertices indexed by $A_i^\pm, i=0,\ldots,n$,
\item has a directed edge $A_i^-\to A_j^+$, for all $0\leq i,j\leq n$, unless $\mu(C_{ij})=0$,
\item any edge $A_i^-\to A_j^+$, it has weight $\mu_X(C_{ij})$ assigned to it.
\end{itemize}

\begin{figure}[htp]
\centering
\scalebox{0.5}{\input{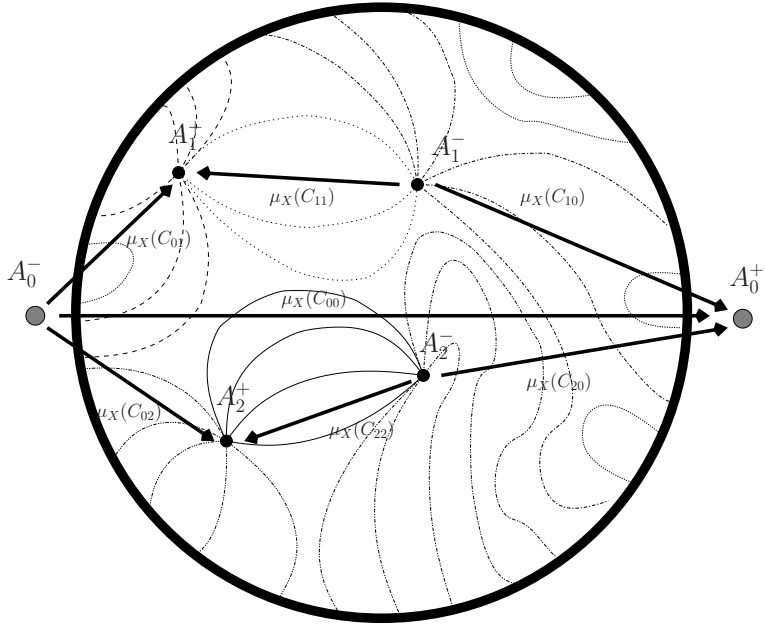}}
\caption{We superpose to the picture of Figure \ref{fig:lines2} the associated graph, where on top of each arrow we also describe its weight. The gray vertices $A_0^+, A_0^-$ correspond respectively to start and end points of curves which lie on the boundary.}\label{fig:graph2}
\end{figure}

Further, if $\bar G\preceq G_X$ then we associate to $\bar G$ a vector field $\bar X\preceq X$ such that $\bar G=G_{\bar X}$, by the following procedure:
\begin{itemize}
 \item Fix a sequence $G_X=G_0\preceq G_1\preceq\cdots\preceq G_N=\bar G$ such that $G_{k+1}$ is obtained from $G_k$ by an elementary operation. We can still identify the vertices of $G_k$ with those of $G_X$. 
\item To each $G_k$ we associate a function $\alpha_k\in L^\infty(\mu_X)$, as follows. We start with $\alpha_0\equiv 1$. For $k>0$ if $G_{k+1}$ is obtained from $G_k$ by multiplying the weight on $A_i^-\to A_j^+$ by $\alpha\in [-1,1]$ then we define $\alpha_{k+1}:=\alpha \chi_{C_{ij}}\alpha_k + \chi_{\mathcal C_\infty\setminus C_{ij}}\alpha_k$.
\item Clearly $\alpha_N\in L^\infty(\mu_X)$ defines an elementary operation on $X$, and so we call $\bar X$ the vector field corresponding to $(T_X)_{\alpha_N}$.
\end{itemize}
\subsection{Proof of the $\epsilon$-regularity}\label{sec:endpf}
\subsubsection{Modifications to eliminate charges in the regular case}\label{sec:remcharges}
In this subsection we restrict to vector fields $X\in\mathcal R_{\mathbb Z}^\infty(\Omega)$ satisfying the conditions of the $\epsilon$-regularity theorem, and we show that we can apply elementary operations decreasing the energy while eliminating the charges of $X$. The main result is as follows.
\begin{proposition}[regular case]\label{regcase}
Suppose that $X\in \mathcal R^\infty_{\mathbb Z}\cap L^p(\hat\Omega)$ and that $\hat\Omega\Supset\Omega$ is such that $\int_{\partial \Omega}|X|<1$ and $\int_{\partial \Omega}X\cdot\nu =0$. Then there exists a second vector field $\bar X\in L^p_{\mathbb Z}(\hat\Omega)$ such that $\bar X\preceq X$ and
\begin{enumerate}
 \item $\bar X=X$ on $\overline{\hat\Omega\setminus \Omega}$,
 \item $||\bar X||_{L^p(\hat\Omega)}<||X||_{L^p(\hat\Omega)}$ and
 \item $(\op{div}\bar X)\res \Omega=0$.
\end{enumerate}
The inequality of point (2) is strict unless $X$ already satisfies point (3).
\end{proposition}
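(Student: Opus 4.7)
The plan is to encode the requirement $\op{div}\bar X=0$ on $\Omega$ as an undirected signed-flow transportation problem on the graph $G_X$ of Section \ref{sec:graphs}, solve it via max-flow/min-cut, and lift the solution back through the elementary-operation dictionary of Section \ref{sec:assocgraph}. First I construct $G_X$ for $X\res\Omega$: the interior sources $A_i^-$ and sinks $A_j^+$ ($i,j\geq 1$) carry integer charges $n_i^-,n_j^+\geq 1$, and the edge weights are $w_{ij}=\mu_X(C_{ij})$. The hypothesis $\int_{\partial\Omega}X\cdot\nu=0$ gives $\sum_{j\geq 1}w_{0j}=\sum_{i\geq 1}w_{i0}$ and $\int_{\partial\Omega}|X|<1$ forces this common value to be strictly less than $1/2$. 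So that the modification glues to $X|_{\overline{\hat\Omega\setminus\Omega}}$ without creating a surface charge on $\partial\Omega$, I keep $\alpha=1$ on every edge incident to $A_0^\pm$ and look for $\alpha_{ij}\in[-1,1]$ on the interior edges ($i,j\geq 1$) subject to
\[
\sum_{j\geq 1}\alpha_{ij}w_{ij}=-w_{i0}\qquad\text{and}\qquad\sum_{i\geq 1}\alpha_{ij}w_{ij}=-w_{0j};
\]
these identities encode exactly condition (3).

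Setting $g_{ij}=-\alpha_{ij}w_{ij}\in[-w_{ij},w_{ij}]$ turns this into an undirected transportation problem: a signed flow from the sources $A_j^+$ (supplies $w_{0j}$) to the sinks $A_i^-$ (demands $w_{i0}$), on edges of capacity $w_{ij}$, with balanced totals. By the max-flow/min-cut theorem, feasibility amounts to the cut capacity $s(P_T)+c(P_S,Q_T)+c(P_T,Q_S)+d(Q_S)$ dominating the total supply for every partition $P_S\sqcup P_T$, $Q_S\sqcup Q_T$ of the interior vertices. A short algebraic manipulation based on the vertex identities $n_p^+=w_{0p}+\sum_q w_{qp}$ and $n_q^-=w_{q0}+\sum_p w_{qp}$ reduces this to
\[
2\,c(P_S,Q_T)\geq N_U,\qquad N_U:=\sum_{p\in P_S}n_p^+-\sum_{q\in Q_S}n_q^-\in\mathbb Z,
\]
together with its symmetric version for $N_U<0$, and the same identities produce the lower bound $c(P_S,Q_T)\geq N_U-s(P_S)+d(Q_S)$.

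The hard part is verifying this min-cut inequality, and this is where both assumptions enter crucially. Combining the last lower bound with $s(P_S)<1/2$ and $d(Q_S)\geq 0$ yields $c(P_S,Q_T)\geq N_U-1/2$; the case $N_U\leq 0$ is then trivial, and for the integer values $N_U\geq 1$ one has $N_U-1/2\geq N_U/2$, proving the min-cut condition. It is the interplay between the integer lower bound $n^\pm\geq 1$ on interior charges and the strict $1/2$-threshold coming from $\int_{\partial\Omega}|X|<1$ that makes every cut dominate the total supply; neither hypothesis can be removed.

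Finally, lifting a feasible $(\alpha_{ij})$ via Section \ref{sec:assocgraph} produces $\bar X\preceq X$. Property (1) holds because the boundary-touching edges were untouched and property (3) by construction. For (2), the $L^p$-monotonicity under elementary operations noted after the definition of those operations gives $||\bar X||_{L^p(\hat\Omega)}\leq ||X||_{L^p(\hat\Omega)}$; strictness whenever $X$ has an interior charge follows because the strict form of the min-cut inequality above implies that the feasible polytope has nonempty relative interior, so one can choose a feasible $(\alpha_{ij})$ with $|\alpha_{ij}|<1$ on at least one interior edge. If instead $X$ already satisfies (3), then $\alpha\equiv 1$ is trivially feasible, giving $\bar X=X$.
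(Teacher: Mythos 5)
Your feasibility argument is correct and takes a genuinely different route from the paper. Where the paper keeps the boundary arrows fixed, extracts a maximal flow between the two boundary vertex groups of $G_X$, and then runs a four-case analysis of the leftover graph (with Lemma \ref{genericcase} absorbing the ``generic'' case), you encode $(\op{div}\bar X)\res\Omega=0$ directly as feasibility of a signed transportation problem on the interior edges, with supplies $w_{0j}$, demands $w_{i0}$, capacities $w_{ij}$, and verify the Gale--Hoffman/min-cut condition. The algebra checks out: using $n_p^+=w_{0p}+\sum_q w_{qp}$ and $n_q^-=w_{q0}+\sum_p w_{qp}$, the cut condition is exactly $2c(P_S,Q_T)\geq N_U$, and $c(P_S,Q_T)\geq N_U-s(P_S)+d(Q_S)> N_U-\tfrac12$ together with $N_U\in\mathbb Z$ settles it. This is the same interplay (integer interior charges versus boundary mass $<\tfrac12$) that drives the paper's proof, but packaged more transparently; in particular it subsumes the role of Lemma \ref{genericcase}.

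The gap is in the strictness of point (2). You claim that ``the strict form of the min-cut inequality implies that the feasible polytope has nonempty relative interior, so one can choose a feasible $(\alpha_{ij})$ with $|\alpha_{ij}|<1$ on at least one interior edge.'' This does not follow: every nonempty convex set has nonempty relative interior, and the cut inequalities with $N_U\leq 0$ need not be strict, so nothing you proved excludes the possibility that the feasible set is a single point at which every positive-weight interior edge is saturated, $|\alpha_{ij}|=1$, with some $\alpha_{ij}=-1$. In that case the monotonicity remark after the definition of elementary operations yields only $\|\bar X\|_{L^p}\leq\|X\|_{L^p}$, and pure reversals of disjoint groups of trajectories need not lose any $L^p$-mass; so (2) could fail to be strict even though $\op{div}X\res\Omega\neq0$, and strictness is precisely what the comparison argument in the proof of Theorem \ref{ereg} uses. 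The paper sidesteps this by construction: after removing the maximal flow, the leftover interior arrows have positive total weight and are multiplied by zero, which forces a strict drop. Your route can be repaired, but it needs an actual argument: in an all-saturated feasible point the vertex identities give reversed mass exactly $n_v/2\geq\tfrac12$ and kept mass $n_v/2-b_v>0$ at every interior vertex; orienting kept edges $A^-\to A^+$ and reversed edges $A^+\to A^-$, every vertex has an outgoing edge, hence there is a directed (automatically alternating, by bipartiteness) cycle, and a small circulation around it stays feasible, contradicting uniqueness; convexity then produces a feasible point with some $|\alpha_{ij}|<1$ on a positive-weight edge. As written, this step is missing, so the strict inequality in (2) is not established.
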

\begin{proof}
The main idea of the proof is to apply elementary operations to $X$, so that we cancel out the charges inside $\Omega$. Because of the above constructions, it is enough to do the corresponding operations on the graph $G_X$ that encodes all the information that we need for the proof.\\
 \textbf{Step 1: structure of the graph $G_X$.} Consider the graph $G:=G_X$ defined in Section \ref{sec:assocgraph}, and call 
\begin{itemize}
 \item $C^+, C^-$ the sets of vertices of $G$ corresponding to the interior charges of a given sign,
\item $\Sigma^\pm$ the sets of vertices of $G$ corresponding to components of $\partial \Omega$ with local charge $\pm$, i.e. $\Sigma^{\pm}=\{A_0^{\pm}\}$.
\end{itemize}
The form of our graph is summarized in the following scheme, where we also indicate names for groups of arrows:

$$
\xymatrix{
\Sigma^+\ar[r]^{\sigma^+}\ar@/^2pc/[rrr] &C^-&C^+\ar[l]_\nu\ar[r]^{\sigma^-}&\Sigma^-}.
$$

The hypothesis $\int_{\partial \Omega}|X|^p<1$ implies that the arrows $\sigma^\pm$ have total weight less than $1$. This will be important in the sequel.\\
\textbf{Step 2: elimination of the singularities.}
We want to keep the arrows in $\sigma^\pm$ fixed, and modify the other arrows via elementary operations so that the modified graph satisfies Kirchhoff's law. This can be done as follows:
\begin{itemize}
 \item We keep (i.e. multiply by $+1$) all the edges which go directly from $\Sigma^+$ to $\Sigma^-$. Since these edges are not affected by the elementary operations done in the rest of the proof, we suppose from now on, without loss of generality, that there are no such edges.
 \item Let's restrict to a connected component of our graph. Suppose first that it has the form drawn above (i.e. it is not degenerate): in this case we can find a maximal Kirchhoff subgraph $K$ connecting $\Sigma^+$ to $\Sigma^-$, in the undirected graph 
$$
\xymatrix{\Sigma^+\ar@{-}[r]&C^-\ar@{-}[r]&C^+\ar@{-}[r]&\Sigma^-}.
$$
By the ``max flow-min cut'' theorem, after subtracting such directed subgraph, the remaining edges make a disconnected graph that has $4$ possible forms (where we keep the orientations as in the original $G$):

\begin{enumerate}
 \item All arrows in $\nu$ have been cut, but there are some edges joining $\Sigma$ to some point charges. These charges correspond to singularities of $X$, for which at least $1/2$-charge flowed from/to $\Sigma$. In particular, since the difference $|\sigma^+|-|\sigma^-|$ is constant during our construction, there must be an even number of such charges. This is not possible because the $\int_\Sigma |X\cdot\nu_\Sigma|d\mathcal H^2$ was assumed to be smaller than $1$.
\item \label{hg2} The whole graph has been used, and we end up without leftover edges of the graph. Then again we see that $\int_\Sigma |X\cdot\nu_\Sigma|d\mathcal H^2$ is prohibited to be smaller than $1$, since in any charge connected to $\Sigma^\pm$, the total wight of the arrows from/to the boundary $\partial \Omega$, is $=\frac12$, and there are at least $2$ such charges.
\item \label{hg1} All arrows $\sigma^-$ have been cut. Then also the arrows in $\sigma^+$ have disappeared after eliminating the maximal flow, again because $|\sigma^+|-|\sigma^-|$ is constant (equal to zero) during these modifications. Thus in this case all arrows outside $\nu$ are canceled. Then we can multiply by zero the remaining arrows: these arrows are of positive total weight since else we reduce to point \eqref{hg2}, which is already excluded. Thus we strictly decrease the $L^p$-norm of $X$.
\item The last case is the ``generic'' one: it could be that after the cut we are left with a graph of the form
$$
\xymatrix@R0.2pc{&C^-&C^+\ar[l]&\\\Sigma^+\ar[ur] &&&\Sigma^-\\&\bar C^-&\bar C^+\ar[ur]\ar[l]&}.
$$
It is shown in Lemma \ref{genericcase} that in this case it is possible to find another minimal cut that gives a graph as in  \eqref{hg1} or in \eqref{hg2} instead, and we conclude the proof.

\end{enumerate}
The conclusion of this enumeration is that the only possible cases that allow any singularity at all inside $\Omega$ and are compatible with the small boundary energy are the ones corresponding to the case \eqref{hg1} above. Observe that in this case \emph{we are sure to have canceled some edges} i.e. we have decreased the energy of $\tilde X$, as wanted.
\end{itemize}
\end{proof}
\begin{example}
 Consider a regular vector field in $\mathcal R_{\infty}\cap L^p(\Omega)$ that has $5$ singularities, one point having charge $1$ a second point having charge $2$, and the remaining points having charge $-1$ each (see Figure \ref{fig:mcmf}). Suppose that the weights of the edges of the associated graph are as in Figure \ref{fig:mcmf}. We assume from the beginning that $\mu_X$ gives no weight to the curves that both start and end point on the boundary (such curves are anyways not affected by our manipulations). The maximal flow showed on the right corresponds to any of the $3$ minimal cuts on the left. In general, no uniqueness of either the maximal flow or the minimal cut is guaranteed.
\begin{figure}[htp]
\centering
\scalebox{0.5}{\input{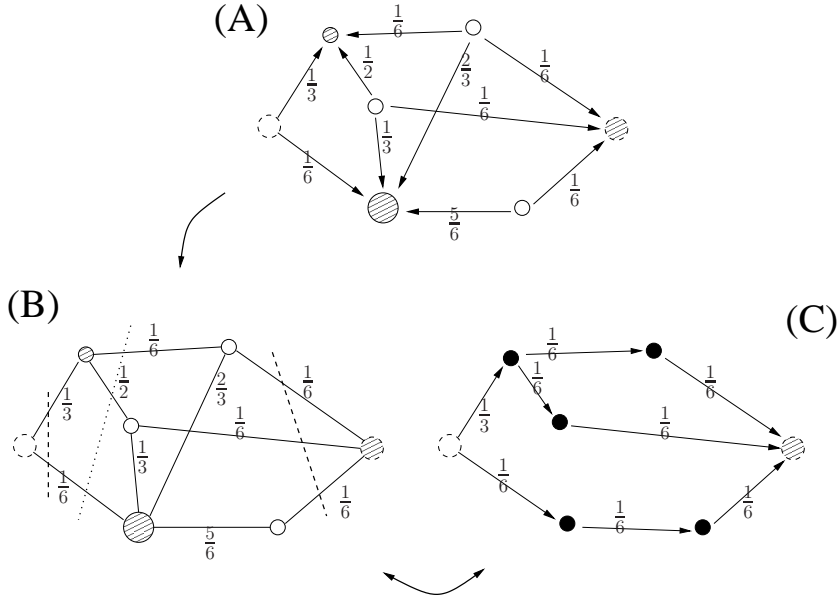}}
\caption{(A): a graph corresponding to a possible vector field $X$ having $6$ charges (of which the one represented by a larger circle is a double one). In the unoriented graph (B), we represent by dashed lines two minimal cuts separating the vertices with dashed boundaries; a non-minimal cut is represented by a dotted line. Observe that the flow through each of the $3$ cuts in (A) is the same, but in (B) the sum of edge capacities is larger for the dotted line. In (C) we show the unique maximal obtained on (B) between the gray vertices.}\label{fig:mcmf}
\end{figure}
In Figure \ref{fig:canccharges} it is shown what happens next, in our manipulations. Once we fix the maximal flow of Figure \ref{fig:mcmf}, we change by elementary operations the flow lines of $X$, ending up with the graph on the left of Figure \ref{fig:canccharges}. Since this represents a flow, i.e. obeys Kirchhoff's law, the curves representing the modified vector field $\bar X$ are concatenated, i.e. that they all start and end on the boundary. This concatenation is ``automatically done'' by Smirnov's decomposition, since the associated current $T_{\bar X}$ is totally decomposed (see Definition \ref{def:totdec}). The ``canceled flow'' on the right of the figure, gives a measure of the amount of $L^p$-norm of $X$ gained this way.\\

We must point out that the $L^p$-energy improvement in passing from $X$ to $\bar X$ depends also from factors not captured by the graph $G_X$ itself, namely on the lengths and concentrations of the curves decomposing the associated current $T_X$. But for our purposes a subtler analysis along these lines is not needed.
 \begin{figure}[htp]
\centering
\scalebox{0.5}{\input{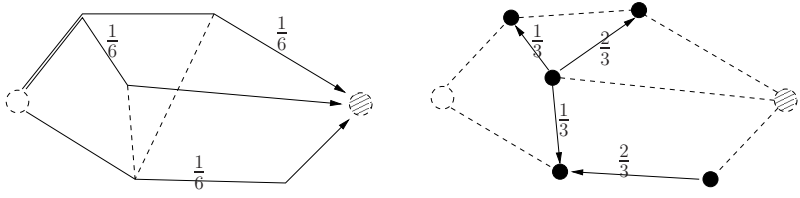}}
\caption{Continuing with the example of Figure \protect\ref{fig:mcmf}, we show schematically on the left what remains after the cancellation of the charges (in terms of the associated graphs the three arrows of weight $\frac{1}{6}$ actually are substituted by just one arrow of weight $\frac{1}{2}$, but we drew the picture to suggest that a procedure of ``concatenating arrows'' is actually underlying the operation). On the right we show the flow that results after removing the maximal flow graph out of the initial graph. In our charge removal procedure, we diminish the weights of our graph by the amounts in the right picture, so in this particular $\bar X$ has a smaller energy than $X$.}\label{fig:canccharges}
\end{figure}
\end{example}

\begin{lemma}\label{genericcase} Under the hypotheses of Proposition \ref{regcase} on $X$, suppose that a connected component of the associated graph $G_X$ has the form 
$$
\xymatrix@R0.3pc{&C^-&C^+\ar[l]\ar@{.>}[ddl]^c\ar@{.>}[dr]^e&\\\Sigma^+\ar[ur]^a\ar@{.>}[dr]_b &&&\Sigma^-\\&\bar C^-&\bar C^+\ar[ur]_f\ar[l]\ar@{.>}[uul]^d&},
$$
where a minimal cut is given by the arrows in $b,c,d,e$. Then another minimal cut is given by the arrows in $a,b$.
\end{lemma}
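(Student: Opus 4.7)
The plan is to verify two things: that $\{a,b\}$ is indeed a cut, and that its capacity $a+b$ coincides with the given min-cut capacity $b+c+d+e$. For the first point, the only edges incident to $\Sigma^+$ in the underlying undirected graph are $a$ and $b$, so removing them isolates $\Sigma^+$ from $\Sigma^-$; this already yields the one-sided inequality $a+b\geq b+c+d+e$ via max-flow/min-cut applied to the cut provided in the hypothesis.

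The substantive step is the reverse inequality, which I would obtain by invoking Kirchhoff's law at the two interior vertices $C^-$ and $C^+$ that lie on the source side of the given cut. Since the vertices of $G_X$ enumerate singularities with repetition (each carrying a unit charge), denoting by $g$ the weight of the unlabelled edge $C^+\to C^-$, conservation of incoming and outgoing weights reads
\[
a+g+d \;=\; 1 \;=\; g+c+e,
\]
and subtracting gives the key identity $a=c+e-d$. Non-negativity of $d$ then forces $a\leq c+e\leq c+d+e$, i.e.\ $a+b\leq b+c+d+e$. Combining this with the previous lower bound yields $|\{a,b\}|=|\{b,c,d,e\}|$, so $\{a,b\}$ is another minimum cut. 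As a byproduct one reads off $d=0$, which reflects the fact that once the cut is pushed all the way to $\Sigma^+$ no flux needs to be routed along the edge $d$.

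The conceptual point I would watch most carefully is the interpretation of $C^\pm$ and $\bar C^\pm$ when these labels group together several singularities instead of a single unit vertex. In that case Kirchhoff must be applied vertex-by-vertex and summed, giving $a+g+d=k_{C^-}$ and $g+c+e=k_{C^+}$; the step $a=c+e-d$ then requires the balance $k_{C^-}=k_{C^+}$, which in the present ``generic'' configuration can be extracted from the min-cut hypothesis together with the small-boundary-flux condition $\int_{\partial\Omega}|X|<1$ of Proposition \ref{regcase} and the integrality of the $k$'s. With that verification in hand, the rest of the argument goes through unchanged.
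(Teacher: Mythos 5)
Your argument is essentially the paper's: both rest on conservation at the blocks $C^\pm$ together with a comparison of the capacity of $\{a,b\}$ against the given minimal cut, and your unit-charge computation ($|a|+|g|+|d|=1=|g|+|c|+|e|$, hence $|a|=|c|+|e|-|d|$, with $|d|=0$ read off a posteriori) is exactly the paper's identity $|a|+|d|=|c|+|e|$ in the special case. The step you defer in your last paragraph --- the balance $k_{C^-}=k_{C^+}$ when the blocks contain several unit charges --- is precisely where the paper does its work, so it should not be left as an assertion; it goes as follows. Since $\{a,b\}$ is a cut and $\{b,c,d,e\}$ is minimal, $|c|+|d|+|e|\le |a|$. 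The zero-flux hypothesis $\int_{\partial\Omega}X\cdot\nu=0$ together with $\int_{\partial\Omega}|X|<1$ gives $|a|\le |a|+|b|=|e|+|f|<\tfrac12$; note that the small-boundary-flux condition alone (the only boundary hypothesis you invoke) yields merely $|a|<1$, which is not enough, so the flux-balance $|a|+|b|=|e|+|f|$ cannot be skipped (alternatively it follows from integrality of the total charge of the connected component, since $|a|+|b|-|e|-|f|$ is an integer of modulus $<1$). Combining, $|a|+|c|+|d|+|e|<1$, and since $k_{C^-}-k_{C^+}=|a|+|d|-|c|-|e|$ is an integer of modulus $<1$, it vanishes. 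With this verification inserted, your proof is complete and coincides with the one in the paper.
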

\begin{proof}
The fact that $a,b$ give a cut is clear from the above diagram. We must show that such cut is a minimal one.\\

We indicate by $|x|$ the total flow through the arrows of the group labelled by $x$. First of all observe that by the zero total flux and small boundary energy hypotheses on $X$, 
$$
|a|+|b|=|e|+|f|<\frac12,
$$ 
therefore, being $b,c,d,e$ a minimal cut, by comparison with the above cut we obtain
$$
|b|+|c|+|d|+|e|<\frac12.
$$ 
This implies that \emph{the total number of charges contributing to the vertices $C^+$ is the same as the number of charges contributing to $C^-$, and similarly for $\bar C^+,\bar C^-$}. Indeed, suppose for contradiction that the numbers of charges contributing to $C^+,C^-$ were not equal. Then the total flow $|a|+|c|+|d|+|e|$ would be $\geq1$, and this would contradict the fact that $|a|$ and $|c|+|d|+|e|$ are both $<\frac12$.\\
By the consideration in italics above, we obtain that
$$
|a|+|d|=|c|+|e|,\quad |b|+|c|=|d|+|f|.
$$
Therefore, by definition of a minimal cut
$$
|b|+|c|+|d|+|e|\leq|a|+|b|
$$
and this gives, using the previous computations,
$$ 
|a|\geq|c|+|d|+|e|=|a|+2|d|,
$$
so $|d|=0$ and the above inequalities are actually equalities, as wanted.
\end{proof}

\subsubsection{The proof of $\epsilon$-regularity}\label{comparisonarg}
\begin{proof}[Proof of Theorem \ref{ereg}]
 First of all, we may reduce to the case where the $\epsilon$-regular ball $B(x_0,r)$ of the theorem is the unit ball $B=B(0,1)$, since the estimates and the function spaces considered are invariant under homotheties and translations of $\mathbb R^3$. We call $\hat\Omega$ the image of the initial $B_1$ under this transformation.\\
\textbf{Step 1: fixing a small energy sphere.} We claim that, for any small $\epsilon>0$, we can find a positive measure set of radiuses $\rho>1/2$ such that $\int_{\partial B_\rho}|X|^p<2\epsilon_p$. Indeed, if the opposite estimate would hold for a.e. $\rho>1/2$, then we would obtain 
$$
\int_B|X|^p\geq\int_{1-\epsilon}^1\int_{\partial B_\rho}|X|^p >\epsilon_p,
$$
therefore 
$$
\int_B|X|^pd\mathcal H^3\geq (1-\epsilon)\epsilon_0,
$$
and this contradicts our assumption for $\epsilon$ small enough. Now from the above boundary energy bound by $\epsilon_p$ we get via H\"older's inequality the following bound
$$
\int_{\partial B_\rho}|X|\leq(2\epsilon_p)^{\frac{1}{p}}\left[\mathcal H^2(S^2)\right]^{\frac{p-1}{p}},
$$
and we choose $\epsilon_p$ such that the right hand side is equal to $1$. This gives the small boundary energy condition as in Proposition \ref{regcase}, and the zero flux condition follows from the definition of $L^p_{\mathbb Z}(B)$ and from the inequality $|X\cdot \nu_{B_\rho}|\leq |X|$.\\
\textbf{Step 2: passing to the approximants.} We know that there exist $\tilde X_k\in\mathcal R_{\mathbb Z}^\infty(\hat\Omega)$ that converge to $X$ in $L^p$-norm. From the construction leading to this approximation it is clear that we can also further impose the convergence 
$$
\tilde X_k|_{\partial B_\rho}\stackrel{L^p}{\to} X|_{\partial B_\rho},
$$
therefore for $k$ large enough, $\tilde X_k$ satisfies the properties required in Proposition \ref{regcase}. Applying this proposition, we thus obtain $\bar X_k\in L_{\mathbb Z}^p(\hat\Omega)$ which are equal to $\tilde X_k$ outside $B_\rho$ and satisfy $||\bar X_k||_{L^p(\hat\Omega)} \leq ||\tilde X_k||_{L^p(\hat\Omega)}$ (with strict inequality if $(\op{div}\tilde X_k)\res B_\rho\neq 0$) and $(\op{div}\bar X_k)\res B_\rho=0$.\\
\textbf{Step 3: a divergence-free competitor.} By weak compactness of $L^p_{\mathbb Z}(\hat\Omega)$ it follows that a subsequence of the $\bar X_{k'}$ converges weakly to some $\bar X\in L^p_{\mathbb Z}(\hat\Omega)$. The zero divergence condition passes to weak limits, so $\op{div}\bar X=0$ on $B_\rho$. By sequential weak lowersemicontinuity of the norm, we also deduce
$$
||\bar X||_p\leq\liminf_{k'}||\bar X_{k'}||_p\leq \liminf_k||\tilde X_k||_p=||X||_p.
$$
Since $X$ was a minimizer, all the above inequalities must actually be equalities. We also observe that since the sequence $\bar X_{k'}$ converges both weakly and in norm, it must converge also strongly, to $\bar X$. By examining the definition of elementary operations we also observe that the inequality $|\bar X_k|(x)\leq |\tilde X_k|(x)$ holds almost everywhere for all $k$, and from it and the a.e. convergence it follows that the same inequality holds also in the limit. Since both $\bar X$ and $X$ are minimizers it further follows that $|\bar X|(x)=|X|(x)$ almost everywhere.\\

\textbf{Step 4: $X$ is also divergence-free.} We use the classical regularity theory, namely Lemma \ref{minforms} (which applies since $\op{div}\bar X=0$) and Proposition \ref{regularityresult} to deduce that $\bar X$ is H\"older-continuous in the interior of $B_{r/2}$. It then follows that also $\op{div}X=0$ on $B_{r/2}$, since in this case $X\in L^\infty_{\mathbb Z}(B_{r/2})$. Indeed, using Theorem \ref{densityRK} it follows that $X$ can be approximated by vector fields in $\mathcal R_{\mathbb Z}^\infty(B_{r/2})$ in the strong norm in $L^q$ for $q>3/2$. But for such exponents the vector fields in $\mathcal R_{\mathbb Z}^\infty\cap L^q(\Omega)$ are all smooth (and in particular divergence-free, since the divergence is concentrated at their singular points). Thus by approximation also $X$ is divergence-free.
\end{proof}

\subsection{A classical consequence: $C^{0,\alpha}$-regularity}

From Theorem \ref{ereg}, using an extension by Peter Tolksdorff (and Christoph Hamburger) of the regularity theory first developed by Karen Uhlenbeck, it is relatively straightforward to prove the following extension of it: 
\begin{theorem}[H\"older version of the $\epsilon$-regularity]\label{hereg}
 If $X\in L^p_{\mathbb Z}$ is a minimizer then we can find an $\epsilon_p>0$ such that if on
$B^3_r(x_0)\subset B^3$ the vector field $X$ satisfies 
\eqref{enbound}
then on $B_{r/2}(x_0)$ the vector field $X$ is $\alpha$-H\"older, with $\alpha$ depending only on $p$ and with the H\"older constant of $X|_{B_{r/2}}$ depending only on $p$ and on $||X||_{L^p(B_r)}$.
\end{theorem}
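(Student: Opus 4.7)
The plan is to combine the $\epsilon$-regularity Theorem~\ref{ereg} with the classical regularity theory for minimizers of $L^p$ functionals of $p$-Laplacian type. First I apply Theorem~\ref{ereg}: under the smallness hypothesis \eqref{enbound}, it already yields $\op{div}\,X=0$ in $B_{r/2}(x_0)$. Since every smooth divergence-free vector field lies in $L^p_{\mathbb Z}$, the minimality of $X$ in the class $L^p_{\mathbb Z}$ upgrades, on $B_{r/2}(x_0)$, to minimality among all divergence-free $L^p$ competitors that share the normal trace of $X$ on $\partial B_{r/2}(x_0)$.

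Next I translate into the language of differential forms via the correspondence $F_p(U,V)=X_p\cdot(U\times V)$ recalled in the introduction: the divergence-free condition becomes closedness $dF=0$ of the associated $2$-form on the ball $B_{r/2}(x_0)$, and Poincar\'e's lemma yields $F=dA$ for a $1$-form $A$. Thus $\int|X|^p=\int|dA|^p$, and $A$ minimizes this $p$-energy with prescribed tangential trace modulo gauge; in a Coulomb gauge $d^*A=0$ the Euler-Lagrange system is the degenerate elliptic $p$-harmonic system $d^*\bigl(|dA|^{p-2}dA\bigr)=0$. This variational reformulation is precisely the content of Lemma~\ref{minforms}.

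The final step is to invoke Proposition~\ref{regularityresult}, the Uhlenbeck-Tolksdorff-Hamburger interior H\"older regularity theorem for such degenerate elliptic systems, which furnishes an exponent $\alpha=\alpha(p)\in(0,1)$ together with an estimate of the form $[dA]_{C^{0,\alpha}(B_{r/4}(x_0))}\leq C(p)\,\|dA\|_{L^p(B_{r/2}(x_0))}$, and hence the analogous estimate for $X$. The dependence of the H\"older seminorm on $r$ follows by the natural rescaling $Y(y)=r^2X(ry)$, under which both the integer-flux condition and the dimensionless energy $r^{2p-3}\int_{B_r}|X|^p$ are preserved. The main obstacle is merely to verify that the two cited results apply in our setting, i.e.\ that after gauge fixing the divergence-free $L^p$-minimizer becomes a genuine minimizer of a functional controlled by Tolksdorff-Hamburger theory; once $\op{div}\,X=0$ is established on the ball, this is a routine translation between vector fields and $2$-forms on a simply connected domain.
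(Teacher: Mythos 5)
Your proposal follows essentially the same route as the paper's proof: Theorem \ref{ereg} gives $\op{div}X=0$ on $B_{r/2}(x_0)$, Lemma \ref{minforms} supplies the degenerate elliptic system $d\omega=0$, $\delta\left(|\omega|^{p-2}\omega\right)=0$ (the paper gets the Euler equation directly from the exact perturbations $\omega\mapsto\omega+\epsilon d\phi$, which stay in $\mathcal F^p_{\mathbb Z}$, so your Poincar\'e-lemma/Coulomb-gauge detour is a cosmetic variant), and Proposition \ref{regularityresult} together with the scaling $X_r(x)=r^2X(rx+x_0)$ yields the H\"older estimate with the stated dependence on $p$ and $\|X\|_{L^p(B_r)}$. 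The argument is correct and matches the paper's.
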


In order to prove the above theorem, we use the conclusion that $\op{div} X=0$ of Theorem \ref{ereg} and the Euler equation of the functional $\int_{\Omega} |X|^p $ to reduce to the by now classical regularity result for systems of equations due to the above cited authors. The main heuristic idea in play here is that roughly ``$\op{div} X=0$ implies that $X=\nabla f$ for some $W^{1,p}_{loc}$-function $f$''. 

In order to use this idea while still keeping rigorous, we use the formulation of our minimization problem in terms of differential $2$-forms $\omega$ instead of vector fields $X$.

\begin{lemma}\label{minforms}
 The condition that a vector field $X\in L^p_{\mathbb Z}(\Omega)$ minimizes the $L^p$-energy and satisfies $\op{div}X=0$ implies that the associated $2$-form $\omega\in \mathcal F_{\mathbb Z}^p(\Omega)$ satisfies locally in the sense of the distributions the following equations:
$$
\left\{\begin{array}{l}
        d\omega=0\\ \delta\left(|\omega|^{p-2}\omega\right)=0.
       \end{array}
\right.
$$
\end{lemma}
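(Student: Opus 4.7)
The plan is to treat the two equations separately. The first, $d\omega=0$, is an immediate translation of $\op{div}X=0$ via the correspondence $F_p(U,V)=X_p\cdot(U\times V)$: in Cartesian coordinates on $\mathbb R^3$ this identification sends the divergence of a vector field to the exterior derivative of the associated $2$-form (up to the volume form), so the distributional identity $\op{div}X=0$ on $B_{r/2}$ is exactly $d\omega=0$ on $B_{r/2}$.

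For the Euler--Lagrange equation $\delta(|\omega|^{p-2}\omega)=0$, the approach is to derive it by testing against admissible perturbations. Given a smooth compactly supported $1$-form $\phi\in C^\infty_c(\Omega)$, set
\[
\omega_t:=\omega+t\,d\phi,\qquad t\in\mathbb R.
\]
I would first check that $\omega_t$ is an admissible competitor in the minimization problem \eqref{minpb2}. For every closed sphere $\partial B(x,s)\subset\Omega$, Stokes's theorem on the boundaryless $2$-surface $\partial B(x,s)$ gives $\int_{\partial B(x,s)} d\phi=0$, so $\int_{\partial B(x,s)}\omega_t=\int_{\partial B(x,s)}\omega\in\mathbb Z$, i.e.\ $\omega_t\in\mathcal F^p_{\mathbb Z}(\Omega)$. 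The trace at $\partial\Omega$ prescribed in Definition \ref{bdryval} is unchanged because $\phi$ is compactly supported inside $\Omega$. Thus each $\omega_t$ competes against $\omega$.

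Since $\omega$ is a minimizer, the function $g(t):=\int_\Omega|\omega_t|^p\,d\mathcal H^3$ attains its global minimum at $t=0$. Differentiating under the integral (justified by the convexity of $z\mapsto|z|^p$ and dominated convergence, with difference quotients controlled by a constant multiple of $(|\omega|+|d\phi|)^{p-1}|d\phi|\in L^1$ since $d\phi$ is smooth and compactly supported) gives
\[
0=g'(0)=p\int_\Omega|\omega|^{p-2}\langle\omega,d\phi\rangle\,d\mathcal H^3.
\]
Since $\phi$ was an arbitrary test $1$-form, this is precisely the distributional formulation of $\delta(|\omega|^{p-2}\omega)=0$, as required.

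The main (and only genuine) obstacle I foresee is the differentiation at $t=0$ when $p<2$, since $z\mapsto|z|^p$ fails to be $C^2$ at the origin and $|\omega|^{p-2}$ is unbounded where $\omega$ vanishes. This is the standard issue in the derivation of the $p$-Laplace system, handled by pointwise convexity together with a.e.\ differentiability of $t\mapsto|\omega+td\phi|^p$ and a routine dominated convergence bound (the smoothness and compact support of $d\phi$ make the majorant straightforward). I note in passing that $|\omega|^{p-2}\omega\in L^{p/(p-1)}_{\text{loc}}$, so the equation $\delta(|\omega|^{p-2}\omega)=0$ is well-defined distributionally against smooth compactly supported test $1$-forms.
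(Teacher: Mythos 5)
Your proposal is correct and follows essentially the same route as the paper: the first equation is the direct translation of $\op{div}X=0$, and the second is the Euler--Lagrange equation obtained by perturbing $\omega\mapsto\omega+t\,d\phi$ with $\phi$ compactly supported, the perturbation remaining in $\mathcal F_{\mathbb Z}^p(\Omega)$ because $d\phi$ integrates to zero over closed spheres. Your extra care about differentiating $|\cdot|^p$ for $1<p<2$ and about preserving the boundary trace only fills in details the paper leaves implicit.
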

\begin{proof}
 The first equation is a trivial translation of $\op{div}X=0$ in our new setting. The second one is the Euler equation, and can be directly obtained from the requirement that $\omega$ be minimizing, by using the perturbations $\omega\mapsto \omega+\epsilon d\phi$, for $\phi\in C^\infty_0(\wedge^2\Omega)$ and taking the derivative in $\epsilon$ at $\epsilon=0$. Since $d\psi$ is exact, it easily follows that the perturbed form is still in $\mathcal F_{\mathbb Z}^p(\Omega)$.
\end{proof}

With the result of the above lemma, we are exactly in the setting of \cite{Uhlenbeck}, except that that article treats the case $p>2$, while we are interested in the case $1<p<3/2$.\\

Luckily, the result of \cite{Uhlenbeck} was extended in \cite{Tolksdorf}, \cite{Tolksdorf2} to the case $1<p<2$. The article of Tolksdorf considers only the ``basic case'' where the equations concern a differential of a function instead of the generalization of exact differential forms described by Uhlenbeck, but the setting in which Tolksdorf proves regularity can be translated without much effort into the one of Uhlenbeck, and the techniques present there are not affected by the translation. Later on Hamburger \cite{Hamburger} partially recovers the approach of Uhlenbeck for the whole range of exponents of Tolksdorf, but this work deals with homological minimizers (i.e. minimizers with respect to perturbations as in the proof of Lemma \ref{minforms} that keep the comparison forms in our class $\mathcal F_{\mathbb Z}^p(\Omega)$) instead of just using the Euler equations as the other two works. In particular, the range of exponents $p\in]1,2[$ is recovered from the range $p>2$ via a duality argument where 
the requirement of dealing with minimizers is involved. Due to these considerations, we can safely state the following version of these regularity results. A recent treatment of the regularity for differential forms including the case needed here is \cite{BS}.
\begin{proposition}[\cite{Tolksdorf},\cite{Hamburger},\cite{BS}]\label{regularityresult}
 If $\omega\in L^p(\wedge^2\Omega)$ satisfies the equations of Lemma \ref{minforms} in the weak sense, then $\omega$ is $\alpha$-H\"older, with $\alpha$ depending only on $p$ and with the local H\"older constant of $X|_{B_{r/2}}$ depending only on $p$ and on $||X||_{L^p(B_r)}$ for any ball contained in $\Omega$. 
\end{proposition}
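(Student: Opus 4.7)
The plan is to reduce the statement, locally on any ball $B \Subset \Omega$, to the standard $p$-harmonic system for a vector/form potential of $\omega$, and then invoke the cited regularity theory. Because $d\omega=0$ holds in the sense of distributions, the $L^p$-Poincar\'e lemma on a (topologically trivial) ball $B$ produces a $1$-form $\eta \in W^{1,p}(\wedge^1 B)$ with $\omega = d\eta$, unique up to adding a closed $1$-form. To eliminate this gauge freedom I would impose the Coulomb gauge $\delta\eta = 0$ together with a zero normal trace on $\partial B$ (or, equivalently, fix $\eta$ as the minimizer of $\int_B |d\alpha|^p$ among $\alpha$ with $d\alpha = \omega$), which gives a canonical choice satisfying the quantitative bound $\|\eta\|_{W^{1,p}(B)} \lesssim \|\omega\|_{L^p(B)}$.

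In this gauge, the second equation $\delta(|\omega|^{p-2}\omega)=0$ becomes $\delta(|d\eta|^{p-2}d\eta)=0$, which together with $\delta\eta=0$ is precisely the Euler--Lagrange system of the $p$-Dirichlet energy $\eta \mapsto \int_B |d\eta|^p$. On a contractible ball a $1$-form is naturally identified with an $\mathbb{R}^3$-valued function, so the above system is the vector-valued $p$-Laplace system studied in \cite{Uhlenbeck} for $p>2$ and extended by \cite{Tolksdorf,Tolksdorf2} to $1<p<2$; the form-valued formulation is treated directly in \cite{BS}, while \cite{Hamburger} recovers the full range by a duality argument that requires the homological minimizer property, which is precisely the one inherited by $\omega$ (and hence by $\eta$) from the original minimization problem \eqref{minpb2}.

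Invoking these results, $\eta \in C^{1,\alpha}_{\mathrm{loc}}(B)$ with exponent $\alpha=\alpha(p)$ and with local H\"older constant controlled by $\|d\eta\|_{L^p(B)} \sim \|\omega\|_{L^p(B)}$. Differentiating, $\omega = d\eta \in C^{0,\alpha}_{\mathrm{loc}}(B)$ with the same dependencies; a standard rescaling $x \mapsto x_0+rx$ converts this into the scale-invariant estimate claimed on any $B_r(x_0) \subset \Omega$.

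The main obstacle is the clean transfer between the scalar/vector-valued $p$-harmonic setting of Tolksdorf and Hamburger and our differential-form setting. Concretely, one must check that (i) the Coulomb gauge is attainable in $L^p$ with the needed continuous dependence, (ii) the minimizing property of $\omega$ in the class $\mathcal{F}^p_{\mathbb{Z}}$ descends to a homological (not just critical-point) property of $\eta$ so that Hamburger's duality argument applies throughout $1<p<3/2$, and (iii) no spurious boundary data on $\partial B$ spoil the interior estimates. All three points are essentially absorbed by using the form-valued treatment of \cite{BS}, which is why citing it is the cleanest route; the remaining work is the bookkeeping of the gauge choice and the scaling.
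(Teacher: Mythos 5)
Your proposal is correct and takes essentially the same route as the paper: the paper also treats this proposition as a citation of the degenerate-elliptic regularity theory for closed forms (Uhlenbeck for $p>2$, Tolksdorf for $1<p<2$ in the function case, Hamburger and Beck--Stroffolini for the form-valued case), and your Poincar\'e-lemma/Coulomb-gauge step is just the bookkeeping implicit in placing the system $d\omega=0$, $\delta(|\omega|^{p-2}\omega)=0$ into that framework, followed by the same rescaling remark. The only caveat is that, since the nonlinearity depends on $|d\eta|$ rather than on the full gradient of $\eta$, the gauged system is not literally the map-valued $p$-Laplace system, which is exactly why, like the paper, you must ultimately rest on the form-valued treatments (\cite{Hamburger}, \cite{BS}) rather than on \cite{Tolksdorf} verbatim --- a point you correctly identify and absorb by citing \cite{BS}.
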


From the above lemma and the proposition, it is straightforward that Theorem \ref{hereg} holds.

\section{For minimizers $X$, weak convergence implies strong convergence}\label{luckhaus}
In this section we prove the following compactness result:
\begin{theorem}\label{weakstrong}
 Suppose $X_k\in L^p_{\mathbb Z}(B)$ are minimizers of the $L^p$-energy, and that $X_k\rightharpoonup X$ weakly in $L^p$. Then $X$ is also a minimizer and $X_k\to X$ also $L^p$-strongly on any ball $B(0,r), r<1$. In particular, any sequence of minimizers of bounded energy has a strongly convergent subsequence.
\end{theorem}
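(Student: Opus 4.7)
The plan leverages the uniform convexity of $L^p$ for $p\in(1,\infty)$: since $X_k\rightharpoonup X$ weakly is given, strong convergence on $B(0,r)$ will follow as soon as we know $\|X_k\|_{L^p(B(0,r))}\to\|X\|_{L^p(B(0,r))}$. Weak lower semicontinuity already gives one direction, so the task is to establish the $\limsup$ inequality $\limsup_k\|X_k\|_{L^p(B_\rho)}^p\leq\|X\|_{L^p(B_\rho)}^p$ for some radius $\rho\in(r,1)$. Once this is done, the statement that $X$ is itself a minimizer follows by a symmetric comparison: given any competitor $Z$ for $X$ with the prescribed trace on $\partial B$, one glues $Z$ to each $X_k$ across a thin annulus near $\partial B$ to produce competitors for $X_k$, and the minimality of $X_k$ together with passage to the weak limit yields $\|X\|_p\leq\|Z\|_p$; closure of $L^p_{\mathbb Z}$ under weak convergence is Theorem \ref{closureweakbdl}.

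The core tool is a Luckhaus-type interpolation lemma adapted to $L^p_{\mathbb Z}$, which is the announced content of Section \ref{luckhaus}. By an averaging argument in the radial variable I would select a radius $\rho\in(r,1)$ and a subsequence so that the restrictions $X_k|_{\partial B_\rho}$ have uniformly bounded $L^p$-energy, carry integer flux, and converge to $X|_{\partial B_\rho}$ in the distance $d$ of Proposition \ref{boundaryvalconserved}. The Luckhaus lemma should then furnish, for each $k$ and some $\delta_k\to 0$, a vector field $W_k\in L^p_{\mathbb Z}$ on the annulus $B_{\rho+\delta_k}\setminus B_\rho$ whose inner trace agrees with $X|_{\partial B_\rho}$, whose outer trace agrees with $X_k|_{\partial B_{\rho+\delta_k}}$, and with $\|W_k\|_{L^p}\to 0$. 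Defining the competitor $Y_k$ to equal $X$ inside $B_\rho$, $W_k$ on the annulus, and $X_k$ outside $B_{\rho+\delta_k}$, and invoking the minimality of $X_k$ (which shares its boundary data on $\partial B$ with $Y_k$), one obtains
\[
\|X_k\|_{L^p(B_\rho)}^p\leq\|X\|_{L^p(B_\rho)}^p+\|W_k\|_{L^p}^p+\|X_k\|_{L^p(B_{\rho+\delta_k}\setminus B_\rho)}^p,
\]
whose right-hand side converges to $\|X\|_{L^p(B_\rho)}^p$ as $k\to\infty$ by equi-integrability of $|X_k|^p$ and the vanishing of $\|W_k\|_{L^p}$. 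This closes the argument, and gives strong convergence on $B_\rho\supset B(0,r)$.

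The principal obstacle is the construction of the interpolation $W_k$ while respecting the integer-flux constraint. Unlike the classical Luckhaus lemma for Sobolev maps into a manifold, where one averages two maps and then retracts onto the target, here the condition \eqref{star} is topological: no pointwise averaging of two fields in $L^p_{\mathbb Z}$ will in general produce a field still in $L^p_{\mathbb Z}$, because the integer flux through intermediate spheres can fail. The paper's combinatorial graph technology from Section \ref{sec:graphs}, applied after approximation by Theorem \ref{densityRK} and via the Smirnov decomposition into Lipschitz arcs, is precisely what turns the interpolation problem into a discrete matching problem between the point-charges of $X|_{\partial B_\rho}$ and $X_k|_{\partial B_{\rho+\delta_k}}$: the charges are paired off and joined by flow-line arcs traversing the annulus, whose aggregate length, and hence $L^p$-energy contribution, is controlled by the distance $d$ between the two slices and therefore vanishes in the limit. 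Ensuring that this discrete pairing can be executed geometrically inside a thin annulus with the claimed energy bound is, I expect, the hardest technical point.
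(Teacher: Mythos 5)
Your overall architecture coincides with the paper's: reduce strong convergence to the $\limsup$ energy inequality via uniform convexity, build a competitor equal to $X$ inside a slightly larger ball and to $X_k$ outside, and interpolate across a thin annulus while respecting the integer-flux constraint (this is the content of Section \ref{lucksec}). The gap is in the interpolation step itself. You postulate a Luckhaus-type lemma producing $W_k$ with inner trace $X|_{\partial B_\rho}$, outer trace $X_k|_{\partial B_{\rho+\delta_k}}$, and $\norm{W_k}_{L^p}\to 0$ ``controlled by the distance $d$ between the two slices''. No such statement is available, and as formulated it would fail: the cost of accommodating a normal trace $g$ in a shell of thickness $\epsilon$ scales like $\epsilon^{-1/p}\norm{g}_{L^p(\partial B_1)}$ (this is exactly Lemma \ref{scaling}), so shrinking the annulus makes the interpolation more expensive, not cheaper, and $d$-closeness of the slices controls the flux structure but not the $L^p$ size of the trace mismatch, which is what enters the estimate. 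The paper avoids matching the two traces altogether: Proposition \ref{interpolant} is applied to the difference $Y_k=X_k-X$ (after $\mathcal R_\infty$-approximation), and a pigeonhole/Fubini choice of the annulus $A_I$ and of the sphere $\partial B_{\inf I}$ makes both $\norm{Y_k}_{L^p(A_I)}$ and the trace norm of $Y_k$ small enough to beat the $\epsilon^{-1/p}$ factor; the competitor is then $\tilde X_k-\bar Y_k$, where $\bar Y_k$ equals $Y_k$ inside and $0$ outside the shell.

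The second problem is your proposed mechanism for handling the flux constraint: pairing the point charges of the two slices and joining them by flow-line arcs whose ``aggregate length, and hence $L^p$-energy contribution'' is controlled. Length controls the mass of the associated $1$-current, i.e. an $L^1$-type quantity, not the $L^p$ norm for $p>1$: a vector field concentrated near curves crossing a thin annulus has large (indeed, without fattening, infinite) $L^p$ energy, and the paper explicitly notes that no superposition principle is available in this range of exponents. In the actual construction the graph and max-flow/min-cut machinery (Proposition \ref{regcase}) is used only to cancel the charges of $Y_k$ lying inside the shell and to discard the decomposition curves with both endpoints on the outer sphere; the remaining through-flux is realized not by arcs but by $\nabla f$ with $f$ solving the Neumann problem \eqref{neumann}, which is where the quantitative bound comes from. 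Finally, your separate gluing argument for the minimality of $X$ would require the same corrected interpolation; in the paper, minimality of the limit is obtained directly from the local strong convergence once it is established.
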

It is a classical result that strong convergence can fail while weak convergence holds, only if some energy is lost in the limit. Thus, it remains to prove that the energy of $X$ on $B_r$ is not lower than the limit of the energies of the $X_k$ on the same ball. The fact that any $X$ obtained as a strong limit of minimizers is a minimizer itself follows from the strong local convergence.\\

The idea of the proof is to introduce a small parameter $\epsilon>0$ and to construct an interpolant $\tilde X_k\in L^p_{\mathbb Z}(B)$ that equals $X_k$ on $B\setminus B_{r+\epsilon}$ and $X$ inside $B_r$, in such a way that the energy of $\tilde X_k$ in the small spherical shell $B\setminus B_{r+\epsilon}$ goes to zero as $\epsilon\to 0$. This allows us, using the minimization property of $X_k$, to bound from above the energy of $X_k$ on $B_r$, by the energy of $X$ on the same ball.\\

For the proof of the $\epsilon$-regularity, it is enough to be able to do the constructions for vector fields in $\mathcal R_\infty$. The interpolation construction faces again a problem related to possibility that (the approximant of) $X-X_k$ have some singularities in the small shell $B_{r+\epsilon}\setminus B_r$. We deal with this situation again by choosing shells where on the boundaries $X-X_k$ does not have large energy for $k$ large, and by applying the singularity removal operations of Proposition \ref{regcase} from the $\epsilon$-regularity proof. After these elementary operations, we are reduced to an easier situation (see Figure \ref{fig:weakstrong1}), where the curves of the Smirnov decomposition of our vector fields all move from one boundary of the shell to the other. In this simpler case, the interpolation can be done via an auxiliary function $f$ satisfying a Neumann boundary value problem in the shell, and the scaling of the classical energy bounds as the thickness of the shell vanishes (see 
Lemma \
ref{
scaling}), are strong enough for our purposes.

\subsection{Interpolant construction in the regular case}\label{lucksec}
The result on the existence of the interpolants that we need is the following.
\begin{proposition}\label{interpolant}
 There exists a constant $C$ depending only on our exponent $p$ from above, such that the following holds. For any numbers $R$ and $\epsilon$ such that $R>1+\epsilon>1$, for any $Y\in \mathcal R_\infty(B_R)$ having zero flux through $\partial B_{1+\epsilon}$ and through $\partial B_1$, having no singularities lying on these two boundaries, and satisfying 
$$
\int_{\partial B_r}|Y|d\mathcal H^2<\frac{1}{2},
$$
for $r=1$ and for $r=1+\epsilon$, there exists another vector field $\bar Y\in \mathcal R_\infty(B_R)$, such that
\begin{itemize}                                                                                                                                             \item $\bar Y= Y$ on $B_1$,
\item $\bar Y= 0$ outside $B_{1+\epsilon}$,
\item $||\bar Y||_{L^p(B_{1+\epsilon}\setminus B_1)}\leq ||Y||_{L^p(B_{1+\epsilon}\setminus B_1)}+C \epsilon^{-\frac{1}{p}}||Y||_{L^p(\partial B_1)}$.
\end{itemize}
\end{proposition}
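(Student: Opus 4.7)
The plan is to combine the singularity-elimination procedure of Proposition \ref{regcase} (to tame the behavior of $Y$ in the shell $B_{1+\epsilon}\setminus B_1$) with an explicit interpolation built from the solution of a Neumann problem, whose $L^p$-scaling as $\epsilon\to 0$ supplies precisely the second term in the claimed estimate.

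First, I would apply Proposition \ref{regcase} with $\Omega = B_{1+\epsilon}\setminus B_1$ and $\hat\Omega$ a slightly larger shell where $Y$ is defined. The hypotheses of that proposition are supplied exactly by ours: the total boundary energy satisfies $\int_{\partial\Omega}|Y|<\tfrac12+\tfrac12 = 1$ and the zero flux through each of the two spheres yields $\int_{\partial\Omega}Y\cdot\nu=0$. This produces a vector field $\tilde Y$ that coincides with $Y$ on $\overline{\hat\Omega\setminus\Omega}$ (in particular it preserves the traces of $Y$ on $\partial B_1$ and $\partial B_{1+\epsilon}$), satisfies $\op{div}\tilde Y=0$ in the shell, and obeys $\|\tilde Y\|_{L^p(\Omega)}\leq\|Y\|_{L^p(\Omega)}$. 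In terms of the Smirnov decomposition associated to $\tilde Y$ via Theorem \ref{thmc}, all flow lines in $\Omega$ now run directly between $\partial B_1$ and $\partial B_{1+\epsilon}$, with no interior singularities surviving inside the shell (cf.\ Figure \ref{fig:weakstrong1}).

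Next, I would introduce an auxiliary harmonic function $f$ on the shell solving the Neumann problem
\[
\Delta f = 0 \text{ in }\Omega, \qquad \partial_\nu f|_{\partial B_1} = Y\cdot\nu|_{\partial B_1}, \qquad \partial_\nu f|_{\partial B_{1+\epsilon}} = 0.
\]
The compatibility condition $\int_{\partial B_1}Y\cdot\nu + \int_{\partial B_{1+\epsilon}}0=0$ holds by the assumed zero-flux, so $f$ exists. By Lemma \ref{scaling}, which records the sharp scaling of classical $L^p$-estimates on shells of vanishing thickness, one has
\[
\|\nabla f\|_{L^p(\Omega)}\leq C\,\epsilon^{-1/p}\|Y\|_{L^p(\partial B_1)}.
\]
I would then define $\bar Y$ piecewise: $\bar Y=Y$ on $B_1$, $\bar Y=0$ outside $B_{1+\epsilon}$, and in the shell $\bar Y=\chi(r)\tilde Y-\nabla f$, where $\chi(r)$ is a smooth radial cutoff with $\chi(1)=1$ and $\chi(1+\epsilon)=0$ chosen so that the tangential components vanish at the outer boundary while the normal component is already forced to vanish there by the Neumann condition on $f$ and the matching $\nabla f\cdot\nu = Y\cdot\nu$ at $r=1$ cancels the mismatch with the interior $Y$. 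The $L^p$ bound in the shell follows from the triangle inequality, the estimate $\|\chi\tilde Y\|_{L^p(\Omega)}\leq\|\tilde Y\|_{L^p(\Omega)}\leq\|Y\|_{L^p(\Omega)}$, and the Neumann bound above.

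The main obstacle is the sharp $\epsilon^{-1/p}$ scaling of $\|\nabla f\|_{L^p(\Omega)}$: one cannot simply cite standard Neumann regularity on a fixed domain, but must track the degeneration of the shell carefully, which is precisely the content of the separate Lemma \ref{scaling}. A secondary delicate point is ensuring that $\bar Y$ remains in $\mathcal R_\infty$, i.e.\ smooth outside a finite set; here it is crucial that Proposition \ref{regcase} leaves the traces of $Y$ on $\partial B_1$ untouched (so that $\chi\tilde Y$ agrees with $Y$ in trace at $r=1$ modulo the smooth correction $\nabla f$, which is smooth everywhere in $\Omega$) and that the finitely many original singularities of $Y$ inside $B_1$ are the only surviving ones. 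Everything else is bookkeeping.
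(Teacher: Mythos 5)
Your first step (eliminating the interior charges in the shell via the elementary operations of Proposition \ref{regcase}, with the hypotheses checked exactly as you do) and your reliance on Lemma \ref{scaling} for the sharp $\epsilon^{-1/p}$ Neumann estimate are both in line with the paper. The gap is in the gluing ansatz $\bar Y=\chi(r)\tilde Y-\nabla f$ in the shell, which does not produce an admissible field. First, multiplying a divergence-free field by a radial cutoff creates a \emph{diffuse} divergence, $\op{div}(\chi\tilde Y)=\chi'(r)\,\tilde Y\cdot\hat r$, spread over the whole shell; since $f$ is harmonic, $-\nabla f$ cannot cancel it, so spheres contained in the shell acquire arbitrary non-integer fluxes and $\bar Y$ is neither in $\mathcal R_\infty(B_R)$ nor usable as a building block for an $L^p_{\mathbb Z}$ competitor in Theorem \ref{weakstrong}. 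Second, the trace matching at $\partial B_1$ fails in the direction opposite to what you claim: with $\chi(1)=1$, the shell-side normal trace is $\tilde Y\cdot\nu-\nabla f\cdot\nu=Y\cdot\nu-Y\cdot\nu=0$, while the inside gives $Y\cdot\nu$, so your construction deposits a surface charge of density $Y\cdot\nu$ on $\partial B_1$ (changing the sign of $\nabla f$ gives $2Y\cdot\nu$ instead, so no choice of sign repairs it). Also, after the charge removal the flow lines need not all run ``directly between'' the two spheres: there remain curves with both endpoints on $\partial B_1$ and curves with both endpoints on $\partial B_{1+\epsilon}$, and these must be treated differently.

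The paper avoids the cutoff altogether and works on the Smirnov decomposition a second time: it cancels (multiplies by $0$) the curves with both endpoints on $\partial B_{1+\epsilon}$, keeps the curves with both endpoints on $\partial B_1$, and isolates the through-going part $Y_2$ (one endpoint on each sphere). Only $Y_2$ is removed and replaced by $\nabla f$, where $f$ solves the Neumann problem with data $-Y_2\cdot\nu$ on $\partial B_1$ and $0$ on $\partial B_{1+\epsilon}$; the interpolant in the shell is $Y_1-Y_2+\nabla f$. This has zero divergence in the shell, normal trace equal to that of $Y$ on $\partial B_1$ and zero on $\partial B_{1+\epsilon}$, hence $\partial T_{\bar Y}=(\partial T_Y)\res\op{int}B_1$ and $\bar Y\in\mathcal R_\infty(B_R)$; the estimate then follows from Lemma \ref{scaling} together with the fact that the elementary operations only decrease the boundary norm, $\|Y_2\|_{L^p(\partial B_1)}\leq\|Y\|_{L^p(\partial B_1)}$, and $\|Y_1-Y_2\|_{L^p}\leq\|Y\|_{L^p}$ on the shell. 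To fix your argument you would have to replace the cutoff-plus-correction step by this exact ``remove the crossing flux and reroute it through a gradient field'' mechanism; as written, the divergence structure (and hence membership in $\mathcal R_\infty$ and the integer-flux property downstream) is lost.
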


\begin{proof}[Proof of Proposition \ref{interpolant}]
Consider the total decomposition $\mu$ of the current $T_Y$ associated to $Y$. In order to prove Proposition \ref{interpolant}, we proceed in two steps. In the first one (see Section \ref{sec:remcharges}), we apply some elementary operations $Y|_{B_{1+\epsilon}\setminus B_1}$, obtaining a new vector field $Y_1$ such that
\begin{itemize}
 \item $\bar Y_1:=\chi_{B_{1+\epsilon}\setminus B_1}Y_1+(\chi_{B_R}-\chi_{B_{1+\epsilon}\setminus B_1})Y$ still belongs to $L^p_{\mathbb Z}(B_R)$,
 \item $||\bar Y_1||_{L^p(B_R)}\leq||Y||_{L^p(B_R)}$,
 \item $\op{div}Y_1=0$ in the interior of $B_{1+\epsilon}\setminus B_1$,
 \item $\mu_{Y_1}\res S=\mu_{Y|_{B_{1+\epsilon}\setminus B_1}}\res S$ where the Borel (for the weak topology) set $S$ consists of the $1$-currents $R$ having boundary on $\partial B_{1+\epsilon}\cup\partial B_1$.
\end{itemize}
In the second step, we modify the currents $R\in S$ (that up to now were untouched by our construction). We apply an elementary operation in which we cancel (i.e. multiply by $0$) the $R$'s with both boundaries on $\partial B_{1+\epsilon}$, and we let the others unchanged. Then we consider (identifying the current $T_2$ with a vectorfield $Y_2$)
$$
T_2=Y_2:=\int_{S'}R d\mu_{Y_1}(R),
$$
where $S'$ are the currents corresponding to lipschitz curves with one end on $\partial B_{1+\epsilon}$ and the other one on $\partial B_1$. It follows $Y_2\preceq Y_1$ and we see that $Y_2$ is an $L^p$-vector field, and since $\mu_{Y_2}$ totally decomposes $Y_2$, there holds $\op{div}Y_2=0$ on $B_{1+\epsilon}\setminus B_1$. The elementary operations decrease the $L^p$-norms of the boundary values, thus 
\begin{equation}\label{esty2}
\int_{\partial B_1}|Y_2|^pd\mathcal H^2\leq \int_{\partial B_1}|Y|^pd\mathcal H^2.
\end{equation}
We now are in a situation where on one hand
$$
\partial T_2=(\nu\cdot Y_2|_{\partial B_{1+\epsilon}})\mathcal H^2\res\partial B_{1+\epsilon} - (\nu\cdot Y_2|_{\partial B_1})\mathcal H^2\res\partial B_1,
$$
where $\nu$ is the radial vector. On the other hand, by the zero flux condition on $Y$,
$$
\partial T_2\res \partial B_{1+\epsilon}(1)=0= \partial T_2\res \partial B_1(1),
$$ 
and by homological reasons this implies that the two boundary parts above are themselves boundaries.
\begin{figure}[htp]
\centering
\scalebox{0.5}{\input{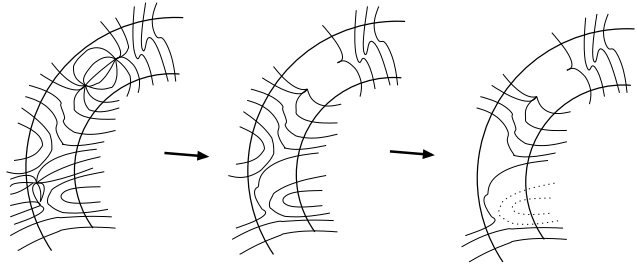}}
\caption{We represent schematically, to the left the decomposition of (the current associated to) the vector field $Y$ near $B_{1+\epsilon}\setminus B_1$, in the center the similar decomposition for $Y_1$, and to the right the vector field $Y_2$, where the part of the decomposition that will stay unmodified (and does not contribute to $Y_2$) is dotted. The result of subtracting $Y_2$ and adding $Y_3$ to $Y_1$ can be rephrased in a more picturesque way by saying that we are ``cancelling'' $Y_2$ and ``replacing it'' by $Y_3$. We eventually loose a bit in our estimates, since $Y_3$ ``forgets about the support'' of $Y_2$, and no easy form of a superposition principle holds for our range of exponents $p$.}\label{fig:weakstrong1}
\end{figure}
 So our strategy is to find another $L^p$-vector field $Y_3$ whose associated current $T_3$ has $\partial T_3=-\partial T_2\res \partial B_{1+\epsilon}$, and which has good norm estimates. The choice to which we are led is as follows:
$$
Y_3=\nabla f,
$$
for $f$ solving
\begin{equation}\label{neumann}
 \left\{\begin{array}{l}
        \Delta f=0\text{ on }B_{1+\epsilon}\setminus B_1,\\ 
        \partial_\nu f=g\text{ on }\partial B_1,\\
        \partial_\nu f=0\text{ on }\partial B_{1+\epsilon},
        \end{array}
\right.
\end{equation}
for $g:=-Y_2\cdot \nu$. Then we can define $\bar Y$ by extending $Y_3+Y_1-Y_2$ as zero outside $B_{1+\epsilon}$ and as $Y$ inside $B_1$.\\
The boundary of the associated current $T_{\bar Y}$ is equal to $(\partial T_Y)\res \op{int}B_1$, therefore $\bar Y\in \mathcal R_\infty(B_R)$.\\

 The only fact left to prove in order to obtain Proposition \ref{interpolant}, is the estimate of the $L^p$-energy of $\bar Y$, for which we need the following scaling lemma:
\begin{lemma}\label{scaling}
 There exists a constant $C$ depending only on the exponent $p$ but not on $\epsilon$, such that the following holds. For any $f\in W^{1,p}(B_{1+\epsilon}\setminus B_1)$ that is a weak solution of the Neumann boundary value equation \eqref{neumann}, where $g\in L^p(\partial B_1)$, the following estimate holds:
$$
||\nabla f||_{L^p(B_{1+\epsilon}\setminus B_1)}\leq C\epsilon^{-\frac{1}{p}}||g||_{L^p(\partial B_1)}.
$$
\end{lemma}
\begin{proof}
 We denote by $f_\epsilon$ a solution of \eqref{neumann} with parameter $\epsilon$. We observe that the weak formulation of the above Neumann problem states that for all $\phi\in C^\infty(\bar B_{1+\epsilon}\setminus B_1)$, 
$$
\int_{B_{1+\epsilon}\setminus B_1}\nabla f_\epsilon\cdot\nabla \phi =\int_{\partial B_1}\phi g d\mathcal H^2.
$$
Therefore, for any $\epsilon>0$ and for any test function $\phi$ on $\mathbb R^3$ there holds
\begin{equation}\label{neumconseq}
\int_{B_{1+\epsilon}\setminus B_1}\nabla f_\epsilon\cdot\nabla \phi =\int_{B_2\setminus B_1}\nabla f_1\cdot\nabla\phi .
\end{equation}
Now observe that the gradients form a closed subspace of $L^p(B_{1+\epsilon}\setminus B_1,\mathbb R^3)$, thus the following equality holds, where $q=\frac{p}{p-1}$:
\begin{equation}\label{eqnorm}
||\nabla f_\epsilon||_{L^p(B_{1+\epsilon}\setminus B_1)}=\sup\left\{\int_{B_{1+\epsilon}\setminus B_1}\nabla f_\epsilon\cdot\nabla \phi_\epsilon :\:||\nabla \phi_\epsilon||_{L^q(B_{1+\epsilon}\setminus B_1)}\leq 1\right\}.
\end{equation}
Here it is enough to consider functions $\phi_\epsilon$ belonging to $C^\infty(\bar B_{1+\epsilon}\setminus B_1)$. For any such test function, there exists another test function $\phi$ defined via the relation
$$
\phi_\epsilon(1+r,\theta)=\phi(1+r/\epsilon,\theta),\quad \forall r\in[0,\epsilon],\forall\theta\in S^2.
$$
The map $\phi\mapsto\phi_\epsilon$ is bijective between $C^\infty(\bar B_2\setminus B_1)$ and $C^\infty(\bar B_{1+\epsilon}\setminus B_1)$ and for a geometric constant $C_g\leq2$, there holds 
$$
||\nabla \phi_\epsilon||_{L^q(B_{1+\epsilon}\setminus B_1)}\leq C_g\epsilon^{\frac{1}{q}-1}||\nabla\phi||_{L^q(B_2\setminus B_1)}.
$$
This last fact and \eqref{neumconseq} can be applied to the equivalent definition \eqref{eqnorm}, immediately yielding our thesis. Indeed, we can obtain a constant $C$ as in the theorem's formulation, which depends on $C_g$ and on the constant of the classical $L^p$-regularity estimate for the Neumann problem on the domain $B_2\setminus B_1$, neither of which depends on $\epsilon$.
\end{proof}

We thus obtained an estimate of $||Y_3||_{L^p(B_{1+\epsilon}\setminus B_1)}$ via $\epsilon^{-\frac{1}{p}}||Y_2||_{L^p(\partial B_1)}$, and this suffices because of \eqref{esty2}. Moreover, $||Y_1-Y_2||_{L^p(B_{1+\epsilon}\setminus B_1)}\leq ||Y||_{L^p(B_{1+\epsilon}\setminus B_1)}$, because $(Y_1-Y_2)|_{B_{1+\epsilon}\setminus B_1}\preceq Y|_{B_{1+\epsilon}\setminus B_1}$. This concludes the proof of Proposition \ref{interpolant}.
\end{proof}

\subsection{Proof of Theorem \ref{weakstrong}}
In the proof of Theorem \ref{weakstrong} it is enough to consider the case $r=1$, and suppose $B=B_R, R>1$, since the general case follows via the scaling of the energy.\\

If the $X_k$ and the $X$ would be in $L^p\cap \mathcal R_\infty(B)$, then we would apply Proposition \ref{interpolant} to $Y_k=X_k-X$ on the shell $B_{1+\epsilon_0}\setminus B_\epsilon$. In general we cannot rely on this hypothesis, so we use the fact that $\mathcal R_\infty(B)$ is dense in $L^p_{\mathbb Z}(B)$ and complicate a bit our constructions.\\

\begin{proof}[Proof of Theorem \ref{weakstrong}:] We proceed in $3$ steps.\\
\textbf{Step 1: finding a spherical shell of small norm.} $X\in L^p(B)$ and the $X_k$ converge weakly to it, so by weak lowersemincontinuity, up to forgetting the first terms of the sequence $X_k$, there holds
$$
||X_k-X||_{L^p}\leq ||X_k||_{L^p}+||X||_{L^p}\leq 3||X||_{L^p}.
$$
We fix $\epsilon_0$ and we divide the interval $[1,1+\epsilon_0]$ in $M$ smaller intervals $I_h$ of length at least $\epsilon=\epsilon_0/2M$. Then, with the notation
$$
A_{I_h}=\{x\in\mathbb R^3:\:|x|\in I_h\},
$$
by pigeonhole principle we can find a subsequence of the $X_k$ and an index $h$ such that 
$$
||X_k-X||_{L^p(A_{I_h})}^p\leq \frac{C}{M}=\epsilon\frac{2C}{\epsilon_0}.
$$
From now on we forget about $h$, and call $I:=I_h$. Given any $\delta>0$, up to choosing another subsequence and changing $I$ slightly, we can also assume 
$$
||X_k-X||_{L^p(\partial B_{\inf I})}^p\leq \delta.
$$
\textbf{Step 2: approximating the interpolant.} At this point, with the notation $Y_k:=X_k-X$, we use the strong density of $\mathcal R_\infty(B)$ in $L^p_{\mathbb Z}(B)$ to find an approximant $\tilde Y_k\in \mathcal R_\infty(B)$ such that the $L^p$-distance of $Y_k$ and of $\tilde Y_k$ on $A_I$, as well as the $L^p$-distance of their boundary values, are not larger than $\epsilon_1$. Similarly we can define approximants $\tilde X_k, \tilde X$.\\
Up to changing $I$ slightly, we can insure that none of the $\tilde Y_k$ have any charges on $\partial A_I$, so that we can apply Proposition \ref{interpolant} to them. We obtain $\bar Y_k\in\mathcal R_\infty(B)$ that is 
\begin{itemize}
 \item $L^p$-close to $Y_k$ on $B_1$,
 \item zero om $B\setminus B_{1+\epsilon_0}$.
\end{itemize}
Up to passing to a subsequence there holds:
$$
\left\{\begin{array}{l}
        \tilde X_k-\bar Y_k\rightharpoonup \bar X_k\in L^p_{\mathbb Z}(B),\\
        \left.(\tilde X_k - \bar Y_k)\right|_{B_{1+\epsilon_0}\setminus B_1}\to X_k|_{B_{1+\epsilon_0}\setminus B_1},\\
        \left.(\tilde X_k - \bar Y_k)\right|_{B_1}\to X|_{B_1}.
       \end{array}
\right.
$$
The $\bar X_k$ defined as above (which depends of the choices of subsequences, on $I$, and on the parameters $\epsilon _1,\epsilon_0,\epsilon, \delta$), will be our choice of an interpolant between $X$ and $X_k$.\\

\textbf{Step 3: final norm estimates.} We can now patch together all our constructions and estimates to obtain the following chain of inequalities. We simplify the notations and write directly $||\cdot||_{X}$ instead of $||\cdot||_{L^p(X)}^p$.
\begin{eqnarray*}
 ||X_k||_{B_1}&\leq& ||X_k||_{B_{1+\epsilon_0}}\leq ||\bar X_k||_{B_{1+\epsilon_0}}\quad\text{by minimality of }X_k\\
 &\leq& ||X||_{B_1}+\liminf_{\epsilon_1\to0}||\bar Y_k||_{B_{1+\epsilon_0}\setminus B_1}\quad\text{by lowersemicontinuity}\\
&\leq&||X||_{B_1}+C\liminf_{\epsilon_1\to0}\left(||Y_k||_{A_I} +\epsilon_1+ C\epsilon^{-1}||Y_k||_{\partial B_{\inf I}} +\epsilon_1\right)\quad\text{ using Prop. }\ref{interpolant}\\
&\leq&||X||_{B_1} + C\left(\frac{\epsilon}{\epsilon_0} +\frac{\delta}{\epsilon_0}\right),
\end{eqnarray*}
and since there is no obstruction to letting $\epsilon,\delta$ be arbitrarily small, the desired inequality
$$
||X_k||_{L^p(B_1)}\leq||X||_{L^p(B_1)},
$$
holds and the thesis follows.
\end{proof}

\section{The regularity result}\label{secfinal}
\subsection{Dimension of the singular set}
\begin{definition}
For a vector field $X\in L^p(\Omega)$ defined on some domain $\Omega$, we define the \emph{regular set of $X$}, $\op{reg} (X)\subset\Omega$, as the set of those points in a neighborhood of which $X$ is $C^1$-regular. The set $\Omega\setminus \op{reg}(X):=\op{sing}(X)$ is called the \emph{singular set of $X$}.
\end{definition}

\begin{proposition}\label{dimsing}
If $X\in\mathbb L^p_{\mathbb Z}(\Omega)$ is a minimizer of the $L^p$-energy, then for $\Omega'\Subset\Omega$, $\mathcal H^{3-2p}(\op{sing}(X)\cap\Omega')=0$ and $\op{sing}(X)$ is nowhere dense in $\Omega'$.
\end{proposition}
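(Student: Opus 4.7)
The strategy is to deduce Proposition \ref{dimsing} from the $\epsilon$-regularity theorem via a standard density lower bound and Vitali covering argument. The key preliminary observation is that, combining Theorem \ref{hereg} with the elliptic bootstrap from H\"older to $C^{1,\alpha}$ regularity for the Hodge system of Lemma \ref{minforms} (the standard $p$-Laplacian theory of Uhlenbeck--Tolksdorf already invoked for Proposition \ref{regularityresult}), one has
$$
x\in\op{sing}(X)\cap\Omega' \ \Longrightarrow\ r^{2p-3}\int_{B_r(x)}|X|^p\,d\mathcal H^3\geq \epsilon_p \quad\text{for every } 0<r<\op{dist}(x,\partial\Omega).
$$
In other words, $\op{sing}(X)$ is contained in the set where the upper $(3-2p)$-dimensional density of the finite Radon measure $\mu$ defined by $d\mu=|X|^p\,d\mathcal H^3$ is bounded below by a positive constant. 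This reduction to an abstract density condition on $\mu$ is the only place where the $\epsilon$-regularity theorem enters; the remainder is standard geometric measure theory.

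For the nowhere-density claim, observe that at every Lebesgue point $x$ of $|X|^p$ one has $r^{2p-3}\int_{B_r(x)}|X|^p\,d\mathcal H^3 = r^{2p}\cdot r^{-3}\int_{B_r(x)}|X|^p\,d\mathcal H^3\to 0$ as $r\to 0$, because $2p>0$ and $\mu$ has finite $3$-dimensional density at such points. The density lower bound therefore forces every $x\in\op{sing}(X)\cap\Omega'$ to lie outside the Lebesgue set of $|X|^p$, so that $\mathcal H^3(\op{sing}(X)\cap\Omega')=0$. Since $\op{reg}(X)$ is open by definition, $\op{sing}(X)\cap\Omega'$ is relatively closed in $\Omega'$; having Lebesgue measure zero it has empty interior in $\Omega'$, hence is nowhere dense.

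For the Hausdorff measure estimate, put $S:=\overline{\op{sing}(X)\cap\Omega'}\subset\Omega$, a compact set with $\mathcal H^3(S)=0$ by the previous step. For any $0<\delta<\op{dist}(\overline{\Omega'},\partial\Omega)$, the density bound at scale $\delta$ gives $\int_{B_\delta(x)}|X|^p\,d\mathcal H^3\geq \epsilon_p\delta^{3-2p}$ for every $x\in\op{sing}(X)\cap\Omega'$. The Vitali $5r$-covering lemma applied to $\{B_\delta(x)\}_{x\in\op{sing}(X)\cap\Omega'}$ extracts pairwise disjoint balls $\{B_\delta(x_i)\}$ whose dilations $\{B_{5\delta}(x_i)\}$ still cover $\op{sing}(X)\cap\Omega'$, so that
$$
\mathcal H^{3-2p}_{10\delta}(\op{sing}(X)\cap\Omega')\leq C\sum_i\delta^{3-2p}\leq \frac{C}{\epsilon_p}\sum_i\int_{B_\delta(x_i)}|X|^p\,d\mathcal H^3\leq \frac{C}{\epsilon_p}\int_{N_\delta(S)}|X|^p\,d\mathcal H^3.
$$
Since $S$ is compact we have $N_\delta(S)\downarrow S$ as $\delta\to 0$, and the absolute continuity of $\mu$ with respect to $\mathcal H^3$ together with $\mathcal H^3(S)=0$ forces the right-hand side to tend to $0$, yielding $\mathcal H^{3-2p}(\op{sing}(X)\cap\Omega')=0$. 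The only delicate point I anticipate in this plan is the H\"older-to-$C^1$ bootstrap used to match Theorem \ref{hereg} with the definition of $\op{reg}(X)$; I expect it to follow from the same $p$-Laplacian regularity framework already used for Proposition \ref{regularityresult} and to pose no real obstacle.
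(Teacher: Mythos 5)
Your proof is correct and rests on the same mechanism as the paper's: the $\epsilon$-regularity theorem converts $x\in\op{sing}(X)$ into the density lower bound $r^{2p-3}\int_{B_r(x)}|X|^p\geq\epsilon_p$ at every admissible scale, and a covering by balls of radius $\sim\delta$ with disjointness, combined with the absolute continuity of $|X|^p\,d\mathcal H^3$, forces the $(3-2p)$-dimensional premeasure to vanish as $\delta\to0$. The variations are minor but real. For the Hausdorff estimate the paper bounds $\mathcal H^3\bigl(\bigcup B_k^\delta\bigr)=l\delta^3\leq C\delta^{2p}\to0$ and lets $\int_{\cup B_k^\delta}|X|^p\to0$ directly, whereas you first prove $\mathcal H^3(\op{sing}(X)\cap\Omega')=0$ via Lebesgue points and then shrink neighborhoods of the compact closure; these are the same absolute-continuity step in different clothing. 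For nowhere-density the paper uses a packing/energy-blowup argument, while your observation that $r^{2p-3}\int_{B_r(x)}|X|^p=r^{2p}\cdot r^{-3}\int_{B_r(x)}|X|^p\to0$ at Lebesgue points, so that $\op{sing}(X)$ is a relatively closed null set and hence has empty interior, is shorter and equally valid. One caveat: the ``bootstrap from H\"older to $C^{1,\alpha}$'' that you flag as the delicate point is not actually available in general --- in the Uhlenbeck--Tolksdorf framework the form $\omega$ (i.e.\ $X$) plays the role of the gradient of a $p$-harmonic potential, and such gradients are generically only $C^{0,\alpha}$ near their zero set. This mismatch between Theorem \ref{hereg} and the $C^1$ definition of $\op{reg}(X)$ is, however, inherited from the paper itself, whose proof asserts ``small rescaled energy implies $x_0\in\op{reg}(X)$'' with exactly the same gap; under the reading consistent with Theorem \ref{mainthm}, namely that $\op{sing}(X)$ is the set where $X$ fails to be (H\"older) continuous, your argument goes through verbatim and no bootstrap is needed.
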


\begin{proof}
Without loss of generality we suppose that $X$ is minimizing with respect to perturbations supported in a neighborhood $N$ of $\Omega$, and we prove the result with $\Omega$ instead of $\Omega'$. From Proposition \ref{ereg} we know that $x_0\in\op{reg}(X)$ if for some $r>0$ there holds
$$
r^{2p-3}\int_{B(x_0,r)}|X|^p\leq \epsilon_0.
$$
We can then cover $\op{sing}(X)$ by $2\delta $-balls $B_1^{2\delta},\ldots,B_l^{2\delta}$ contained in $N$ such that the balls $B_k^\delta$, having the same centers and radius $\delta$, are disjoint. Now, by monotonicity we obtain
$$
\epsilon_0\leq\delta^{2p-3}\int_{B_k^\delta}|X|^p,\quad k=1,\ldots,l.
$$
and summing this on $k$ we obtain
\begin{equation}\label{eqhausdorff}
l\delta^{3-2p}\leq \frac{1}{\epsilon_0}\int_{\cup B_k^\delta}|X|^p\leq \frac{||X||_{L^p(\Omega)}^p}{\epsilon_0}.
\end{equation}
After choosing such a family of balls for all $\delta$ we obtain the volume estimate 
$$
\mathcal H^3\left(\bigcup B_k^\delta\right)=l\delta^3\leq C\delta^2p\to 0,
$$
therefore by dominated convergence, 
$$
\int_{\cup B_k^\delta}|X|^p\to 0\text{ as }\delta\to 0.
$$
Inserting this in \eqref{eqhausdorff} gives, by definition of $\mathcal H^{3-2p}$ and by the covering property of our chosen balls, $\mathcal H^{3-2p}(\op{sing}(X))=0$, as wanted.\\
If we choose a ball $B\subset\Omega$ and we pack it as above with families $\mathcal F_\delta$ of small disjoint balls of radiuses $\delta\to 0$, we see by the scaling reasoning as above that if $X$ has rescaled energy bounded from below by $\epsilon_0$ on all balls for all $\delta$, then $X$ has to have infinite energy on $B$, which is not the case. Therefore there is a small ball on which the $\epsilon$-regularity theorem \ref{ereg} holds, showing that $\op{sing}(X)$ is nowhere dense.
\end{proof}

\subsection{Singular set of weak limits of minimzing vector fields}
\begin{proposition}\label{singseqmin}
 Suppose that $X_k$ are minimizers and $X_k\rightharpoonup X_0$. Then $X_k\to X_0$ locally uniformly on $\Omega'\setminus S_0$, for any $\Omega'\Subset \Omega$. Moreover $S_0$ is contained in the energy concentration set  
$$
\Sigma:=\left\{x\in\Omega:\:\liminf_{k\to\infty}\lim_{r\to 0}r^{2p-3}\int_{B(x,r)}|X_k|^p>4\epsilon_0\right\},
$$
where $\epsilon_0$ is the constant of the $\epsilon$-regularity theorem \ref{ereg}, and $\mathcal H^{3-2p}(\Sigma\cap\Omega')=0$.
\end{proposition}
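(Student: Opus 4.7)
The plan is to combine the H\"older $\epsilon$-regularity (Theorem \ref{hereg}), the weak-to-strong convergence for minimizers (Theorem \ref{weakstrong}), the monotonicity formula from Section \ref{statmon}, and a Vitali covering argument in the spirit of Proposition \ref{dimsing}.

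For the uniform convergence away from $\Sigma$, I would fix $x_0\in\Omega'\setminus\Sigma$. By the $\liminf_k$ in the definition of $\Sigma$ there is a subsequence $k_j\to\infty$ along which the densities
$$\Theta_{k_j}(x_0):=\lim_{r\to0}r^{2p-3}\int_{B(x_0,r)}|X_{k_j}|^p$$
satisfy $\Theta_{k_j}(x_0)\le 4\epsilon_0$. The monotonicity formula for each minimizer $X_{k_j}$ propagates this control to a small neighbourhood of $x_0$: for a sufficiently small radius $r_0>0$, the rescaled energy of $X_{k_j}$ on every $B(y,r_0)\subset B(x_0,2r_0)$ is below the $\epsilon$-regularity threshold $\epsilon_p$ (the factor $4$ in the definition of $\Sigma$ is chosen precisely so as to absorb the geometric loss in this passage). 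Theorem \ref{hereg} then delivers uniform $C^{0,\alpha}$-estimates on $X_{k_j}$ on a fixed ball around $x_0$; Arzel\`a-Ascoli produces a locally uniform limit, which by weak convergence must coincide with $X_0$. A standard subsequences-of-subsequences argument upgrades this to locally uniform convergence of the whole sequence $X_k\to X_0$ on $\Omega'\setminus\Sigma$, giving the first claim together with the inclusion $S_0\subseteq\Sigma$.

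For the Hausdorff bound $\mathcal H^{3-2p}(\Sigma\cap\Omega')=0$, the idea is to reduce $\Sigma$ to countably many pieces on each of which a single $X_N$ accounts for the concentration. Setting
$$\Sigma_N:=\bigl\{x\in\Sigma\cap\Omega':\,r^{2p-3}\!\!\int_{B(x,r)}|X_N|^p>3\epsilon_0\text{ for all } r<\op{dist}(x,\partial\Omega)\bigr\},$$
the $\liminf_k$ in the definition of $\Sigma$ combined with monotonicity yields $\Sigma\cap\Omega'=\bigcup_N\Sigma_N$. For each fixed $N$ the Vitali covering argument of Proposition \ref{dimsing} applies verbatim with $X$ replaced by $X_N$: disjoint balls $B(x_i,\delta/5)$ whose quintuples cover $\Sigma_N$ satisfy $(\delta/5)^{2p-3}\int_{B(x_i,\delta/5)}|X_N|^p>3\epsilon_0$, and summing gives $l\,\delta^{3-2p}\le C\int_{\bigcup_iB(x_i,\delta/5)}|X_N|^p$. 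Since $|\bigcup_iB(x_i,\delta/5)|=O(\delta^{2p})\to 0$ and $|X_N|^p\in L^1$, absolute continuity forces the right-hand side to vanish, hence $\mathcal H^{3-2p}(\Sigma_N\cap\Omega')=0$, and a countable union concludes.

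The main obstacle is the first step: matching the $4\epsilon_0$ threshold in the definition of $\Sigma$ with the strictly smaller $\epsilon_p$ required by Theorem \ref{ereg} requires careful book-keeping that relies on both the monotonicity of $r\mapsto r^{2p-3}\int_{B(x,r)}|X_k|^p$ and the inclusion $B(y,r)\subset B(x_0,2r)$ for $y$ close to $x_0$. Without the monotonicity formula, smallness of the density at a single point would not upgrade to uniform smallness of the rescaled energy on a definite neighbourhood, and the Arzel\`a-Ascoli extraction would break down.
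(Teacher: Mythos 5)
Your argument for $\mathcal H^{3-2p}(\Sigma\cap\Omega')=0$ is essentially sound and close in spirit to the paper, which also reduces to the Vitali/absolute-continuity computation of Proposition \ref{dimsing} (the paper uses one finite cover of $\Sigma$ by balls centered in $\Sigma$, with a single $X_k$ for $k$ large, instead of your decomposition $\Sigma\cap\Omega'\subset\bigcup_N\Sigma_N$; note that only this inclusion is true and needed, not the equality you assert). The genuine gap is in the first half. The monotonicity formula says that $r\mapsto r^{2p-3}\int_{B(x_0,r)}|X_{k_j}|^p$ is \emph{nondecreasing}, so the density $\Theta_{k_j}(x_0)$ is the infimum over scales: knowing $\Theta_{k_j}(x_0)\le 4\epsilon_0$ gives no upper bound on the rescaled energy at any fixed radius $r_0>0$. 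The radius at which the rescaled energy of $X_{k_j}$ drops below a threshold may shrink with $j$, so there is no fixed ball on which Theorem \ref{hereg} yields uniform H\"older bounds, and the Arzel\`a--Ascoli step collapses. The factor $4$ also works against you rather than for you: $4\epsilon_0>\epsilon_p$, and the comparison $B(y,r)\subset B(x_0,2r)$ costs a further factor $2^{3-2p}$; in the paper that factor is exploited in the opposite direction, to guarantee a \emph{lower} bound $>2\epsilon_0$ on doubled balls centered at points of $\Sigma$. Finally, because $\Sigma$ is defined with a $\liminf_k$, a point $x_0\notin\Sigma$ only controls one subsequence (depending on $x_0$), so the sub-subsequence trick cannot upgrade to convergence of the whole sequence.

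The missing ingredient is Theorem \ref{weakstrong}, which you list in your plan but never actually use. The paper begins by passing to a subsequence with $X_k\to X_0$ \emph{strongly} in $L^p$; strong convergence is what transfers fixed-scale smallness to the $X_k$ for $k$ large. Concretely, one either argues as in the paper (cover $\Sigma$ by finitely many balls $B_i$ centered in $\Sigma$ with $(2r_i)^{2p-3}\int_{2B_i}|X_k|^p>2\epsilon_0$ for $k$ large and $\sum_i r_i^{3-2p}\le\mathcal H^{3-2p}_\infty(\Sigma)+\epsilon$, and apply the $\epsilon$-regularity off $\bigcup_i B_i$), or notes that at $x_0\in\op{reg}(X_0)$ one has $r^{2p-3}\int_{B(x_0,r)}|X_0|^p\le Cr^{2p}<\epsilon_p/2$ for a fixed small $r$, whence by strong convergence $r^{2p-3}\int_{B(x_0,r)}|X_k|^p<\epsilon_p$ for all large $k$ and Theorem \ref{hereg} gives uniform H\"older bounds on $B(x_0,r/2)$. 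Either route produces the locally uniform (subsequential) convergence away from $S_0$; your density-plus-monotonicity step, as written, cannot.
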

\begin{rmk}
 It can be shown that $S_0=\Sigma$, but we don't need this characterization.
\end{rmk}
\begin{proof}
 We can assume up to taking a subsequence that $X_k\to X_0$ strongly in $L^p$. We show that $\mathcal H^{3-2p}_\infty(\Sigma)=0$, and that outside $\Sigma$ the $X_k$ converge uniformly; this is equivalent to the thesis.\\

 It follows, directly from its definition, that $\Sigma$ can be covered by finitely many balls $B_i$, with centers in $\Sigma$ and radiuses $r_i$, and such that for $k$ large enough, 
$$
(2r_i)^{2p-3}\int_{2B_i}|X_k|^p>2\epsilon_0,\quad\text{for all }k,i.
$$
We fix the choice of this set of balls, such that 
$$
\sum_i r_i^{3-2p}\leq \mathcal H_\infty^{3-2p}(\Sigma)+\epsilon.
$$
Then, by the estimates of the $\epsilon$-regularity, it follows that $X_k$ are uniformly H\"older on $\Omega'\setminus\bigcup B_i$, and therefore they have a subsequence converging uniformly on that set. By the reasoning of the proof of Proposition \ref{dimsing}, as $\delta\to 0$ the sum $\sum r_i^{3-2p}$ must converge to zero, and by the arbitrarity of $\epsilon$ above it follows that $H_\infty^{3-2p}(\Sigma)=0$. \end{proof}

\begin{corollary}\label{schssing}
 Let $X_k$ be a minimizer of the $L^p$-energy, $X_k\rightharpoonup X_0$ and $S_k:=\op{sing}(X_k)$ for $i\geq 0$, and $s\geq 0$. Then for any $\Omega'\Subset\Omega$ there holds
$$
\mathcal H_\infty^s(S\cap \Omega')\geq\limsup_{k\to\infty}\mathcal H_\infty^s(S_k\cap \Omega').
$$
\end{corollary}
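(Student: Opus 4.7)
The plan is to combine Proposition \ref{singseqmin} with a standard covering/compactness argument. The core observation is that outside the energy concentration set $\Sigma$ (whose $\mathcal H^{3-2p}$-measure in $\Omega'$ vanishes), every cluster point of the $S_k$ must lie in $S_0:=\op{sing}(X_0)$; combined with an efficient covering of $S_0\cup\Sigma$, this pushes upper semicontinuity onto $\mathcal H^s_\infty$.

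First, I would prove the inclusion $(\limsup_k S_k)\cap\Omega'\subseteq S_0\cup\Sigma$. Fix $x\in\Omega'\setminus(S_0\cup\Sigma)$. Since $x\notin\Sigma$, Proposition \ref{singseqmin} furnishes a ball $B(x,2\rho)\subset\Omega$ on which $X_k\to X_0$ uniformly; since $x\notin S_0$, the limit $X_0$ is $C^1$ on this ball, hence bounded there. Consequently $\rho'{}^{2p-3}\int_{B(x,\rho')}|X_k|^p \to \rho'{}^{2p-3}\int_{B(x,\rho')}|X_0|^p$, and for $\rho'$ small enough (which we can choose independently of $k$) the latter is smaller than $\epsilon_p/2$ by continuity of $X_0$. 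Thus for $k$ large Theorem \ref{ereg} forces $\op{div} X_k=0$ on $B(x,\rho'/2)$, and by Theorem \ref{hereg} the field $X_k$ is in fact H\"older-continuous there, so $B(x,\rho'/2)\cap S_k=\emptyset$ eventually.

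Next I would execute the covering step. Given $\eta>0$, select an open covering $\{B(x_i,r_i)\}_{i=1}^{N}$ of $S_0\cap\overline{\Omega'}$ with
\[
\sum_{i=1}^{N}(2r_i)^s\leq \mathcal H^s_\infty(S_0\cap\Omega')+\eta/2,
\]
and a covering $\{B(y_j,\rho_j)\}_j$ of $\Sigma\cap\overline{\Omega'}$ with $\sum_j(2\rho_j)^s<\eta/2$; the second covering exists because $\mathcal H^{3-2p}(\Sigma\cap\Omega')=0$ by Proposition \ref{singseqmin}, so $\mathcal H^s_\infty(\Sigma\cap\Omega')=0$ in the range of exponents $s\geq 3-2p$ that is relevant for the dimension-reduction argument (for smaller $s$ the statement becomes trivial since the right-hand side is bounded by $\mathcal H^s_\infty(\Omega')$ and $S_0$ is closed in $\Omega$). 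The set $K:=\overline{\Omega'}\setminus\bigl(\bigcup_iB(x_i,r_i)\cup\bigcup_jB(y_j,\rho_j)\bigr)$ is compact and disjoint from $S_0\cup\Sigma$. By the first step, every point of $K$ has a neighborhood eventually disjoint from $S_k$; by a finite subcovering, there is $k_0$ such that $K\cap S_k=\emptyset$ for all $k\geq k_0$.

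It then follows that for $k\geq k_0$ the family $\{B(x_i,r_i)\}_i\cup\{B(y_j,\rho_j)\}_j$ covers $S_k\cap\Omega'$, which by definition of $\mathcal H^s_\infty$ yields
\[
\mathcal H^s_\infty(S_k\cap\Omega')\leq \sum_i(2r_i)^s+\sum_j(2\rho_j)^s\leq \mathcal H^s_\infty(S_0\cap\Omega')+\eta.
\]
Taking $\limsup_{k\to\infty}$ on the left and then letting $\eta\to 0^+$ gives the desired inequality. The main subtlety I anticipate is purely bookkeeping: verifying that the covering of $\Sigma$ can indeed be made small in the relevant $\mathcal H^s_\infty$-sense (which reduces to the $s\geq 3-2p$ information from Proposition \ref{singseqmin}) and that the compactness of $K$ is not spoiled at $\partial\Omega'$, for which replacing $\Omega'$ by a slightly larger $\Omega''\Subset\Omega$ in the covering step is enough.
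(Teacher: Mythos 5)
Your overall strategy is the same as the paper's: cover the bad set of the limit by finitely many balls whose $s$-content nearly realizes $\mathcal H^s_\infty$, show that for $k$ large all of $S_k\cap\Omega'$ falls inside the union (via $\epsilon$-regularity at points where no energy concentrates), and conclude by subadditivity of the content; your Step 1 together with the compactness of $K$ is essentially a careful writing-out of the paper's one-line claim ``for $k$ large enough $S_k\subset\bigcup B_i$''. The boundary bookkeeping with a slightly larger $\Omega''$ is fine and is glossed over in the paper as well.

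There is, however, one genuine gap: the treatment of the range $s<3-2p$. Your covering of $\Sigma$ with $\sum_j(2\rho_j)^s<\eta/2$ only works for $s\geq 3-2p$ (from $\mathcal H^{3-2p}(\Sigma\cap\Omega')=0$ one gets $\mathcal H^s_\infty(\Sigma\cap\Omega')=0$ only for $s\geq 3-2p$, not below), and the parenthetical claim that the statement is ``trivial'' for smaller $s$ is false: for $s=0$ the inequality says that if infinitely many $X_k$ have a singularity in $\Omega'$ then so does $X_0$, which is precisely the nontrivial semicontinuity invoked in the proof of Theorem \ref{mainthm2} with $s=0<3-2p$; the bound $\mathcal H^s_\infty(S_k\cap\Omega')\leq\mathcal H^s_\infty(\Omega')$ and closedness of $S_0$ give nothing in that direction. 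Fortunately the fix is small and stays within your own argument: Step 1 never actually uses $x\notin\Sigma$. Proposition \ref{singseqmin} gives locally uniform convergence $X_k\to X_0$ away from the closed set $S_0$, and the smallness of $\rho'^{2p-3}\int_{B(x,\rho')}|X_0|^p$ only needs $x\notin S_0$ (local boundedness of $X_0$); hence every $x\in\overline{\Omega'}\setminus S_0$ already has a neighborhood eventually disjoint from $S_k$, so the set you must cover efficiently is $S_0$ alone, the $\Sigma$-cover can be deleted, and the argument then works for every $s\geq 0$ exactly as in the paper. (Alternatively, one can first prove $\limsup_k S_k\subset S_0$ directly from the strong convergence of Theorem \ref{weakstrong} and the $\epsilon$-regularity Theorem \ref{ereg}, and then run your covering step with $S_0$ only.)
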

\begin{proof}
 Consider the balls $B_k$ as in Proposition \ref{singseqmin}, except that this time they are used to approximate $\mathcal H^s_\infty(S)$. Then for $k$ large enough there holds
$$
S_k\subset\bigcup B_i,
$$
and therefore we can obtain
$$
\mathcal H_\infty^s(S\cap \Omega')+\epsilon\geq\lim_{k\to\infty}\mathcal H_\infty^s(S_k\cap \Omega'),
$$
as wanted.
\end{proof}

\subsection{Monotonicity and tangent maps}\label{tangmaps}
We consider now a sequence of blow-ups of a minimzer $X$ around a point $x_0$. We call $X_r(x)=\frac{1}{r^2}X(rx+x_0)$, and we observe that 
$$
X\in L^p_{\mathbb Z}(B_r(x_0))\Leftrightarrow X_r\in L^p_{\mathbb Z}(B)
$$
\begin{proposition}\label{monotonicity}[Monotonicity formula]
If $X\in L_{\mathbb Z}^p$ is a minimizer of the $L^p$-energy, then for all $x\in B$ and for almost all $r<\op{dist}(x,\partial B)$ there holds
\begin{equation}\label{eq:monotonicity}
 \frac{d}{dr}\left(r^{2p-3}\int_{B_r(x)}|X|^pd\mathcal H^3\right)=2p\;r^{2p-3}\int_{\partial B_r(x)}|X|^{p-2}|X^\parallel|^2d\mathcal H^2
\end{equation}
where $X^\parallel$ is the component of $X$ orthogonal to $\partial/\partial r$.
\end{proposition}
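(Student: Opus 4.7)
The plan is to derive \eqref{eq:monotonicity} by combining a Pohozaev--type identity (from an inner variation of the domain) with the coarea identity $\frac{d}{dr}\int_{B_r}|X|^p=\int_{\partial B_r}|X|^p\,d\mathcal H^2$ (valid for a.e.\ $r$). Translate so that $x=0$.

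First I construct a competitor by pullback. For any $\eta\in C_c^\infty(B,\mathbb R^3)$ let $\Phi_t(y)=y+t\eta(y)$ (a diffeomorphism for $|t|$ small), and let $\tilde X_t$ be the vector field associated to the pullback $2$-form $\Phi_t^*F$, where $F$ is the $2$-form corresponding to $X$. Explicitly $\tilde X_t(y)=\det(D\Phi_t(y))\,D\Phi_t(y)^{-1}X(\Phi_t(y))$, so $\tilde X_t=X$ outside $\op{supp}(\eta)$. To see $\tilde X_t\in L^p_{\mathbb Z}(B)$, note that for every sphere $\partial B_\rho(a)\subset B$,
\[
\int_{\partial B_\rho(a)}\tilde X_t\cdot\nu\;=\;\int_{\Phi_t(\partial B_\rho(a))}F\;=\;\int_{\partial B_\rho(a)}F\;\in\;\mathbb Z,
\]
where the middle equality uses Stokes and $dF=0$ on $B\setminus\op{sing}(X)$: the surfaces $\partial B_\rho(a)$ and $\Phi_t(\partial B_\rho(a))$ cobound a $3$-chain that can be taken to avoid $\op{sing}(X)$ thanks to Proposition \ref{dimsing} ($\mathcal H^{3-2p}(\op{sing}(X))=0$, and $3-2p<2$).

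Next I derive a Pohozaev identity. After the substitution $x=\Phi_t(y)$,
\[
\int_B|\tilde X_t|^p\,dy=\int_B(\det M_t(x))^{p-1}\,|M_t(x)^{-1}X(x)|^p\,dx,\qquad M_t(x):=D\Phi_t(\Phi_t^{-1}(x)),
\]
an expression involving only pointwise values of $X$; so minimality forces the $t$-derivative at $t=0$ to vanish, yielding the inner-variation identity
\[
\int_B\bigl[(p-1)\op{div}(\eta)|X|^p-p|X|^{p-2}X\cdot(D\eta\,X)\bigr]\,dx=0.
\]
Specializing to $\eta(x)=\phi(|x|)x$ and using $\op{div}(\eta)=3\phi+|x|\phi'$ together with $X\cdot(D\eta\,X)=\phi|X|^2+|x|\phi'|X^\nu|^2$, then letting $\phi\to\chi_{[0,r]}$ (valid for a.e.\ $r$ by coarea), produces the Pohozaev identity
\[
(2p-3)\int_{B_r}|X|^p\;=\;r\int_{\partial B_r}|X|^{p-2}\bigl(|X^\nu|^2-(p-1)|X^\parallel|^2\bigr)\,d\mathcal H^2.
\]

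Finally, combining this identity with the expansion $\frac{d}{dr}(r^{2p-3}\int_{B_r}|X|^p)=(2p-3)r^{2p-4}\int_{B_r}|X|^p+r^{2p-3}\int_{\partial B_r}|X|^p\,d\mathcal H^2$ and using $|X|^2=|X^\nu|^2+|X^\parallel|^2$ to cancel the normal-component terms on the boundary yields \eqref{eq:monotonicity}. The main obstacle is the first step: verifying $\tilde X_t\in L^p_{\mathbb Z}(B)$ demands that the integer-flux condition be preserved on \emph{every} small sphere in $B$, and this is where the singular-set dimension bound from Proposition \ref{dimsing} is essential (it allows the required cobounding $3$-chains to be chosen off $\op{sing}(X)$). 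The remaining derivations are routine computations.
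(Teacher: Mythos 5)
Your overall computational strategy (inner variation of the domain, specialization to radial fields $\eta(x)=\phi(|x|)x$, passage $\phi\to\chi_{[0,r]}$, and combination with the coarea expansion) is essentially the paper's own argument, which uses the explicit radial maps $F_{r,s,t}(x)=\eta(|x|)x$ and lets $s\downarrow r$. The genuine gap is in your first step, the verification that the pulled-back competitor stays in $L^p_{\mathbb Z}$. You justify it by invoking Proposition \ref{dimsing} ($\mathcal H^{3-2p}(\op{sing}(X))=0$) so as to apply Stokes off the singular set; but the proof of Proposition \ref{dimsing} uses precisely the monotonicity formula \ref{monotonicity} (together with $\epsilon$-regularity), so your argument is circular: at the stage where monotonicity must be established, no smallness of $\op{sing}(X)$ is available -- a priori $\op{sing}(X)$ could be all of $B$. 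Even granting the dimension bound, the Stokes step is not sound as stated: the surfaces $\partial B_\rho(a)$ and $\Phi_t(\partial B_\rho(a))$ bound a fixed thin region, and if $\op{sing}(X)$ meets that region the two surfaces need not be homologous in $B\setminus\op{sing}(X)$, so you cannot ``choose'' a cobounding $3$-chain avoiding it; the fluxes can genuinely differ, and to know they differ by an integer you would need to know that $\op{div}X$ is an integer combination of Dirac masses, which is again a conclusion of the full regularity theory, not an input. The paper closes this gap differently and without any regularity information on $X$: it proves (in Section \ref{statmon}) that membership in $L^p_{\mathbb Z}$ is equivalent to the integrality of the flux through almost every leaf of an arbitrary regular foliation by closed surfaces, and this is established purely from the strong density of $\mathcal R^\infty_{\mathbb Z}$ in $L^p_{\mathbb Z}$ (Theorem \ref{densityRK}) via an $L^p$-contradiction argument; since a diffeomorphism maps the foliation by concentric spheres to a regular foliation, the pullback stays in the class. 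Replacing your Stokes/singular-set argument by this foliation lemma removes the circularity.

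Two smaller points: your Pohozaev identity has a sign slip -- the correct form is $(2p-3)\int_{B_r}|X|^p = r\int_{\partial B_r}|X|^{p-2}\bigl((p-1)|X^\parallel|^2-|X^\nu|^2\bigr)d\mathcal H^2$ (test with the radial field $X=x/|x|^3$ to see the sign), and as written your version does not recombine with the coarea term to give \eqref{eq:monotonicity}. With the corrected sign one obtains the formula with constant $p$ in front when $|X|$-norms are used; the constant $2p$ in the paper's statement reflects its $2$-form norm convention ($|\omega|^2=2|X|^2$), so this is a matter of normalization rather than substance.
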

%
Since the right hand side is positive, the left hand side has a limit $L(x)$ for $r\to 0^+$, so we can integrate equation \eqref{eq:monotonicity} from $0$ to $\lambda$, getting
$$
\lambda^{2p-3}\int_{B_\lambda}|X|^pd\mathcal H^3 - L=2p\int_{B_\lambda}r^{2p-3}|X|^{p-2}|X^\parallel|^2 d\mathcal H^3.
$$
As in \cite{HL}, the function $L(x)$ is actually upper semi-continuous.\\

The equation \eqref{eq:monotonicity} also implies that 
$$
\int_{B_1}|X_r|^p=E_r(X):=r^{2p-3}\int_{B_r}|X|^p
$$ 
is increasing in $r$, therefore the $X_\lambda$ have a $L^p$-weakly convergent subsequence $X_{\lambda_i}\rightharpoonup X_0\in L^p$, $\lambda_i\to0$. By a change of variables in the integrated formula we obtain 
$$
\lambda^{2p-3}\int_{B_\lambda}|X|^pd\mathcal H^3 - L=2p\int_{B_1}r^{2p-3}|X_\lambda|^{p-2}|X_\lambda^\parallel|^2 d\mathcal H^3,
$$
therefore 
\begin{equation}\label{radialitylimit}
\lim_{\lambda\to 0^+}\int_{B_1}r^{2p-3}|X_\lambda|^{p-2}|X_\lambda^\parallel|^2 d\mathcal H^3=0,
\end{equation}
Since $p'=\tfrac{p}{p-1}$ and $X_0\in L^p$, we obtain that $|X_0|^{p-2}X_0\in L^{p'}$; the weight $r^{2p-3}$ actually worsens the convergence above since it's bounded away from zero, so we obtain that $X_0^\parallel=0$. This proves more in general the following:
\begin{proposition}\label{radiality}
For any minimizer $X$, for any $x\in \op{int}( B)$ and for any sequence of rescalings $X_{x,\lambda_i}$ around $x$, with $\lambda_i\to 0$, the weak accumulation points $X_{x,0}$ are radially directed.
\end{proposition}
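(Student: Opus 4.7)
My plan is to reduce the general statement to the computation carried out in the paragraphs preceding the proposition. By translation invariance of the class $L^p_{\mathbb Z}$, of the minimization property, and of the monotonicity identity \eqref{eq:monotonicity}, one may assume $x=0$ without loss of generality. The discussion leading up to the statement already establishes \eqref{radialitylimit}, namely
$$
\lim_{\lambda\to 0^+}\int_{B_1} r^{2p-3}|X_\lambda|^{p-2}|X_\lambda^\parallel|^2\,d\mathcal H^3 = 0,
$$
by integrating \eqref{eq:monotonicity} from $0$ to $\lambda$, rescaling, and using the existence of the monotone limit $L$ as $r\to 0^+$. On $B_1$ the weight satisfies $r^{2p-3}\geq 1$ (since $2p-3<0$), so in fact the unweighted integral $\int_{B_1}|X_\lambda|^{p-2}|X_\lambda^\parallel|^2$ also vanishes as $\lambda\to 0^+$.

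Next I would upgrade this integral vanishing into strong $L^p$-vanishing of the tangential component $X_\lambda^\parallel$. The pointwise factorization
$$
|X_\lambda^\parallel|^p = \bigl(|X_\lambda|^{p-2}|X_\lambda^\parallel|^2\bigr)^{p/2}\cdot|X_\lambda|^{p(2-p)/2}
$$
together with H\"older's inequality with conjugate exponents $2/p$ and $2/(2-p)$ (both strictly greater than $1$ since $p\in(1,3/2)\subset(1,2)$) yields
$$
\int_{B_1}|X_\lambda^\parallel|^p \leq \left(\int_{B_1}|X_\lambda|^{p-2}|X_\lambda^\parallel|^2\right)^{p/2}\left(\int_{B_1}|X_\lambda|^p\right)^{(2-p)/2}.
$$
The second factor is uniformly bounded as $\lambda\to 0^+$ by monotonicity of $E_r(X)$, and the first factor vanishes; hence $X_\lambda^\parallel\to 0$ strongly in $L^p(B_1,\mathbb R^3)$.

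To conclude, let $X_{\lambda_i}\rightharpoonup X_0$ weakly in $L^p(B_1)$ along any sequence $\lambda_i\to 0^+$, and let $\phi\in L^{p'}(B_1,\mathbb R^3)$ be pointwise orthogonal to $\partial/\partial r$. The strong convergence just obtained gives $\int X_{\lambda_i}\cdot\phi=\int X_{\lambda_i}^\parallel\cdot\phi\to 0$, while weak convergence gives $\int X_{\lambda_i}\cdot\phi\to\int X_0\cdot\phi$; so $X_0$ pairs to $0$ against every such test field, i.e.\ $X_0$ is pointwise parallel to $\partial/\partial r$.

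The only step which is more than bookkeeping is the H\"older interpolation: when $p<2$ the pointwise comparison between $|X|^{p-2}|X^\parallel|^2$ and $|X^\parallel|^p$ goes the wrong way (the former is no larger than the latter), so the vanishing produced by the monotonicity identity does not on its own control the $L^p$-norm of $X_\lambda^\parallel$. The interpolation above, at the modest price of a factor controlled by the uniform $L^p$-bound on $X_\lambda$ furnished by monotonicity, is precisely what bridges these two scales and powers the concluding duality argument.
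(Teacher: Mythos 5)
Your proposal is correct and follows essentially the same route as the paper: it rests on \eqref{radialitylimit}, obtained by integrating the monotonicity formula and rescaling, uses that the weight $r^{2p-3}\geq 1$ on $B_1$, and concludes by a duality argument against tangential $L^{p'}$ test fields. The H\"older interpolation giving strong $L^p$-vanishing of $X_\lambda^\parallel$ (needed because for $p<2$ the pointwise comparison with $|X_\lambda^\parallel|^p$ goes the wrong way) is exactly the detail the paper's terse final sentence leaves implicit, so your write-up is a faithful, slightly more explicit version of the paper's argument.
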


\subsection{Stationarity and dimension reduction for the singular set}\label{statdimred}
From now on we call $s$ any exponent (smaller than $3-2p$, as seen above) for which $\mathcal H^s(S\cap \Omega')>0$, where $S=\op{sing} (X)$ for a minimizer $X$. Except for $x_0$ in a set $S'$ such that $\mathcal H^s(S')=0$, there holds
\begin{equation}\label{hslim}
\liminf_{\lambda\to 0}\lambda^{-s}\mathcal H^s(S\cap B_{\lambda/2})>0,
\end{equation}
where the balls $B_{\lambda/2}$ are all centered at $x_0$. As in the previous section, for a subsequence $\lambda_i\to 0$ our blow-ups converge to a radial tangent map $X_0$ weakly in $L^p$, and since they are all minimizers, by Theorem \ref{weakstrong} they converge strongly, up to taking another subsequence, and also $X_0$ is a minimizer.\\

The singular set $S_i$ of $X_{\lambda_i}$ is the blowup of $S$, and 
$$
\lambda_i^{-s}\mathcal H^s(S\cap B_{\lambda_i/2})=\mathcal H^s(S_i\cap B_{1/2})
$$
and from \eqref{hslim} we follow that
\begin{equation}\label{singmeas}
 \mathcal H^s(S_0\cap B_{1/2})>0,
\end{equation}
where $S_0$ is the singular set of $X_0$.\\

Using the radial direction of $X_0$ and the stationarity (Prop. \ref{stationarity}) we now show the following fact.
\begin{lemma}\label{x0invar}
 For any minimizer $X$, any tangent map $X_0$ satisfies
\begin{equation*}
 |X_0|(x)=|x|^{-2}|X_0|(x/|x|).
\end{equation*}
\end{lemma}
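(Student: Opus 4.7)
The approach is to combine three facts about the tangent map $X_0$: (i) by Theorem \ref{weakstrong}, $X_0$ is itself a minimizer, being a strong $L^p$-limit of rescalings of $X$; (ii) by Proposition \ref{radiality}, $X_0$ is radially directed, so $X_0(x)=f(x)\,\hat x$ with $\hat x=x/|x|$ for some scalar function $f$; and (iii) by Proposition \ref{dimsing} combined with the $\epsilon$-regularity theorem \ref{ereg}, $X_0$ is locally $C^{0,\alpha}$ off the set $\op{sing}(X_0)$, which has Hausdorff dimension at most $3-2p<1$, and on the regular set one has $\op{div}X_0=0$ (the flux of $X_0$ through any small sphere contained in the regular set is integer-valued and tends to $0$ with the radius, hence is identically zero).

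Passing to spherical coordinates $(r,\sigma)\in(0,\infty)\times S^2$, write $X_0(r\sigma)=f(r,\sigma)\,\partial_r$ and let
$$
\omega_0 \;=\; f(r,\sigma)\,r^2\,d\omega_{S^2}
$$
be the associated $2$-form on $\mathbb R^3\setminus\{0\}$, where $d\omega_{S^2}$ denotes the round area form on $S^2$. By Lemma \ref{minforms}, wherever $\op{div}X_0=0$ one has $d\omega_0=0$; since $d\omega_{S^2}$ is a top form on the sphere, this reduces to the simple ODE
$$
\partial_r\bigl(f(r,\sigma)\,r^2\bigr) \;=\; 0 \qquad\text{on the regular set of }X_0.
$$
Integrating along any radial line contained in the regular set yields $f(r,\sigma)\,r^2=g(\sigma)$, a constant depending only on the direction $\sigma$.

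To promote this to the almost-everywhere identity stated in the lemma, I invoke the dimension bound: since $\op{sing}(X_0)$ has Hausdorff dimension strictly below $1$ and the radial projection $\pi\colon\mathbb R^3\setminus\{0\}\to S^2$ is locally Lipschitz, $\mathcal H^2\bigl(\pi(\op{sing}(X_0))\bigr)=0$, and hence for $\mathcal H^2$-a.e.~$\sigma\in S^2$ the entire ray $\{r\sigma:r>0\}$ sits in the regular set of $X_0$. Along every such ray the identity $f(r,\sigma)\,r^2=g(\sigma)$ holds globally, and consequently
$$
|X_0|(r\sigma)\;=\;\frac{|g(\sigma)|}{r^2}\;=\;r^{-2}\,|X_0|(\sigma),
$$
which after substituting $x=r\sigma$ is precisely the claimed identity for almost every $x$. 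The main anticipated obstacle is exactly the one addressed by this last step: without the dimension estimate $3-2p<1$ (available only because $p>1$), singular points encountered along a radial line could in principle produce jumps in $f(r,\sigma)\,r^2$, destroying the unambiguous definition of $g(\sigma)$; the projection argument above is what rules this out generically in $\sigma$.
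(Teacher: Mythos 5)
Your argument is correct in substance, but it takes a genuinely different route from the paper. The paper proves the lemma directly from the stationarity (inner variation) identity \eqref{stationarity}: choosing the radial test field $V(\rho,\theta)=f(\rho)\phi(\theta)\hat\rho$ and using the radiality of $\omega$, the identity collapses to $\int|\omega|^p(\rho,\theta)\rho^{2p}F'(\rho)\,d\rho=0$ for all $F$ compactly supported in $]0,\infty[$, whence $|\omega|(\rho,\theta)\rho^2$ is independent of $\rho$ for a.e.\ $\theta$ --- no partial regularity, no compactness, no comparison argument. Your proof instead leans on the full minimizing structure: Theorem \ref{weakstrong} to know $X_0$ is a minimizer, the $\epsilon$-regularity and Proposition \ref{dimsing} to get $\mathcal H^{3-2p}(\op{sing}(X_0))=0$, the integer-flux condition to get $\op{div}X_0=0$ on the regular set, and a Lipschitz-projection/dimension argument ($3-2p<1<2$) to integrate $\partial_r(f r^2)=0$ along $\mathcal H^2$-a.e.\ ray. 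This is logically sound and non-circular (none of the ingredients you invoke depend on Lemma \ref{x0invar}), but note two points. First, what it buys and loses: your route is perhaps more geometric, yet it only applies to minimizers, whereas the paper's remark after Lemma \ref{dirinvar} stresses that the stationarity-only proof makes the lemma valid for merely stationary fields --- that extra generality is lost in your approach, and you also import much heavier machinery for a statement the paper gets from a one-line choice of test field. Second, a small technical point to tighten: the $\epsilon$-regularity yields H\"older continuity (the paper's $\op{reg}$ is defined via $C^1$), so on the regular set the relation $\partial_r(f(r,\sigma)r^2)=0$ should be read distributionally ($d\omega_0=0$, equivalently $\op{div}X_0=0$ in the sense of distributions) and then upgraded to constancy in $r$ along rays by continuity of $f r^2$ (e.g.\ by mollification); as written, ``integrating the ODE'' presupposes classical differentiability that has not been established. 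With that adjustment your proof stands.
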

\begin{proof}
 We use the equation \eqref{stationarity} with respect to a local frame $e_1,e_2,e_3$ such that the vector $e^3$ is the radial one and $\omega$ associated to $X$ has just the component parallel to $de^1\wedge de^2$ different from zero (as was proved in Proposition \ref{radiality}), and we consider a perturbation field that can be expressed in polar coordinates $(\rho,\theta)$ as
$$
V(\rho,\theta)=f(\rho)\phi(\theta)\hat \rho.
$$
We then get from \eqref{stationarity} that
$$
 0=p\int|\omega|^p(\rho,\theta)\frac{1}{\rho}f(\rho)\rho^2d\rho\;\phi(\theta)d\theta - \int|\omega|^p(\rho,\theta)\frac{1}{\rho^2}\partial_\rho(\rho^2f(\rho))\rho^2d\rho\;\phi(\theta)d\theta,
$$
By the arbitrarity of $\phi(\theta)$ this translates into the following equation holding for almost all $\theta$
$$
 \int|\omega|^p(\rho,\theta)\left[2(p-1)\rho f(\rho) - \rho^2 f'(\rho)\right]d\rho=0.
$$
This can also be written in terms of $F(\rho)=\rho^{-2p}f(\rho)]$ as
$$
\int|\omega|^p(\rho,\theta)\rho^{2p}F'(\rho)d\rho=0
$$
and since this holds for all $F$ with support contained in $]0,\infty[$, it must be that 
$$
|\omega|(\rho,\theta)\rho^2\text{ is independent of }\rho,
$$
as wanted.
\end{proof}

Along the same lines as the above proof (we just have to redefine the the orthonormal frames properly), we obtain the following result without difficulty:
\begin{lemma}\label{dirinvar}
 If $X_1$ is parallel to one coordinate direction $e_3$ and if the stationarity equation holds, then $X_1$ is almost everywhere independent of the coordinate $x_3$. In particular the thesis is satisfied if $X_1$ minimizes the energy.
\end{lemma}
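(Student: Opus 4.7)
The plan is to imitate the proof of Lemma \ref{x0invar} with the polar frame replaced by a Cartesian frame aligned with $e_3$. Identifying $X_1 = g\,e_3$ with the $2$-form $\omega_1 = g\,dx_1\wedge dx_2$ under the form/vector correspondence, the analog of the radial test field $V = f(\rho)\phi(\theta)\hat\rho$ used previously is the vertical test field $V(x) = f(x)\,e_3$ for arbitrary $f \in C_c^\infty(\Omega)$. The idea is that, because $V$ points in the distinguished coordinate direction, the pullback by the flow of $V$ preserves the form-type of $\omega_1$ and yields the desired differential equation for $|X_1|$.

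The crucial simplification is that the flow $\phi_t$ of such a $V$ only translates the $x_3$-coordinate, and since $e_3 \lrcorner\, dx_i = 0$ for $i=1,2$ we have $\phi_t^* dx_1 = dx_1$ and $\phi_t^* dx_2 = dx_2$. Consequently
\[
\phi_t^*\omega_1 = (g\circ\phi_t)\,dx_1\wedge dx_2,
\qquad
|\phi_t^*\omega_1|^p = |g\circ\phi_t|^p,
\]
so the "tangential stretching" contribution that produced the $f/\rho$ factor in Lemma \ref{x0invar} is absent; only the transport term survives. Differentiating at $t=0$ and invoking the stationarity equation gives
\[
0 \;=\; \frac{d}{dt}\bigg|_{t=0}\int|\phi_t^*\omega_1|^p\,dx \;=\; \int p\,|g|^{p-2}g\,\partial_3 g\cdot f\,dx \;=\; \int f\,\partial_3|g|^p\,dx.
\]
By the arbitrariness of $f$, this forces $\partial_3|g|^p = 0$ in the sense of distributions, so $|X_1|$ is a.e. independent of $x_3$.

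For the final strengthening from $|X_1|$ to $X_1$ itself, note that since $X_1 = g\,e_3$ with $g$ a scalar, only the sign of $g$ could a priori still depend on $x_3$. When $X_1$ additionally minimizes the energy, the H\"older regularity provided by Theorem \ref{hereg} on $\op{reg}(X_1)$ ensures that the sign of $g$ is locally constant along each $x_3$-fiber in $\op{reg}(X_1)$, and together with the $x_3$-independence of $|g|$ this yields the a.e. $x_3$-independence of $g$ itself. The only real point of care, as the author notes, is the choice of orthonormal frame: one picks it so that $e_3$ is the direction of $X_1$ and $\omega_1$ has only the $dx_1\wedge dx_2$ component non-zero, which is exactly what makes the flow of $V = f\,e_3$ preserve the form-type of $\omega_1$ and kill the stretching term that was present in the radial setting.
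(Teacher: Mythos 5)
Your proposal is correct and is essentially the paper's own argument: the paper proves this lemma simply by remarking that the computation of Lemma \ref{x0invar} goes through after the orthonormal frame is redefined so that $e_3$ is the direction of $X_1$, and that is exactly what you carry out — with $\omega_1=g\,dx_1\wedge dx_2$ and $V=f\,e_3$ the ``stretching'' term of the stationarity identity \eqref{stationarity} vanishes because $\omega_1(e_3,\cdot)=0$, leaving $\int|\omega_1|^p\,\partial_3 f=0$ for all $f\in C^\infty_c$, i.e. $\partial_3|X_1|^p=0$ distributionally. For the passage from $|X_1|$ to $X_1$ itself, note that the paper handles the analogous step (for $X_0$) via $\op{div}X_1=0$ on the regular set, which with $X_1=g\,e_3$ gives $\partial_3 g=0$ there directly; your alternative sign argument also works, but to upgrade ``locally constant sign along a fiber in $\op{reg}(X_1)$'' to constancy along the whole fiber you should add that a.e.\ vertical line misses $\op{sing}(X_1)$, which holds because $\mathcal H^{3-2p}(\op{sing}(X_1))=0$ with $3-2p<1$.
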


\begin{rmk}
 We note that in Sections \ref{tangmaps} and \ref{statdimred} until this point just the monotonicity and stationarity formulas were used, without the intervention of any comparison argument. Thus the results proved so far in this subsection are valid not only when $X$ is a minimizer, but also when $X$ is just stationary, i.e. the $2$-form $F$ associated to it satisfies 
$$
\left.\frac{d}{dt}\right|_{t=0}\int_{B^3}|\phi_t^*F|^p =0,
$$
for all families of diffeomorphisms $\phi_t:B^3\to B^3$ that are differentiably dependent on $t\in[-1,1]$, equal to the identity in a neighborhood of $\partial B^3$, and such that $\phi_0=id_{B^3}$. This requirement is indeed enough to prove stationarity and monotonicity. On the contrary, the dimension reduction technique that we are about to prove uses the strong convergence result which in turn depends on a comparison argument, thus the following proofs hold only for minimizers $X$. An intriguing open question is whether or not the uniqueness of tangent maps holds in our case (see Section 6.2 of \cite{P3} for a broader discussion).
\end{rmk}

We are now ready to apply the dimension reduction technique of Federer to our minimizing vector field $X$. We start with a radial tangent map $X_0$, obtained by blow-up at a point $x_0$ at which $S_0$ has positive density with respect to $\mathcal H^s$ for some $s<3-2p$ as above, as in \eqref{singmeas}.\\
As we saw in Section \ref{tangmaps}, $X_0$ is a strong limit of a blowup sequence relative to some $\lambda_i\to 0^+$. We also know that the singular set $S_0$ of $X_0$ has zero $\mathcal H^{3-2p}$-measure and is nowhere dense. It follows from Lemma \ref{x0invar}, that $|X_0|$ must be $(-2)$-homogeneous, and $\op{div}X_0=0$ locally outside $S_0$. Therefore $X_0$ is itself $(-2)$-homogeneous outside $S_0$, and $S_0$ is radially invariant, i.e. 
$$
\lambda S_0\subset S_0,\quad\forall \lambda>0.
$$
Now we show that $S_0=\{0\}$. Indeed, were this not the case, we could find a point $x_1\in S_0\cap B_{1/2}$. In this case we could blow up again $X_0$ with center $x_1$, obtaining a tangent map $X_1$. By strong convergence we obtain that $X_1$ would have to be both directed radially and directed along one fixed direction: this would imply that $X_1=0$, contradicting the fact that $x_1\in S_0$.\\

The following proposition summarizes the above discussion.
\begin{proposition}\label{dimzero}
 For a minimizing vector field $X$, the singular set of any tangent map $\op{sing}(X_0)$ is either empty or contains just the origin.
\end{proposition}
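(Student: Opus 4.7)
The plan is to follow the strategy indicated in the paragraph preceding the statement, combining the structural properties of tangent maps with a second blow-up and the strong compactness result. First I would assemble the pieces already available from the previous subsections: by Proposition \ref{radiality} any tangent map $X_0$ is radially directed, by Lemma \ref{x0invar} $|X_0|$ is $(-2)$-homogeneous, by Theorem \ref{weakstrong} $X_0$ is itself a minimizer, and by the $\epsilon$-regularity Theorem \ref{ereg} together with Lemma \ref{minforms} we have $\op{div} X_0=0$ on $\mathbb R^3\setminus S_0$. Combining radiality, the $(-2)$-homogeneity of the modulus, and the vanishing divergence along each ray, $X_0$ itself is $(-2)$-homogeneous as a vector field outside $S_0$; hence $S_0$ is a closed cone with vertex at the origin, i.e.\ $\lambda S_0\subset S_0$ for every $\lambda>0$.

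Then I would argue by contradiction: suppose $S_0\supsetneq\{0\}$ and pick $x_1\in S_0$ with $|x_1|=1/2$ (using the cone structure of $S_0$ to rescale). Apply the monotonicity formula (Proposition \ref{monotonicity}) to the minimizer $X_0$ at $x_1$ and extract a subsequence of blow-ups $X_{0,\lambda_i}$ centered at $x_1$ converging weakly in $L^p$ to a tangent map $X_1$; Theorem \ref{weakstrong} upgrades the convergence to strong and certifies that $X_1$ is a minimizer in its own right. Now $X_1$ inherits two competing properties. On one hand, Proposition \ref{radiality} applied to the minimizer $X_0$ gives $X_1(y)\parallel\hat y$ for a.e.\ $y$. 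On the other hand, since $X_0(z)\parallel\hat z$ pointwise a.e.\ and $\widehat{x_1+\lambda_i y}\to\hat x_1$ pointwise in $y$ as $\lambda_i\to 0$, strong $L^p$-convergence (after extracting a further a.e.-convergent subsequence) transmits this directional information to the limit and forces $X_1(y)\parallel\hat x_1$ for a.e.\ $y$.

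These two directional constraints are mutually compatible only on the line through the origin in the direction $\hat x_1$, a Lebesgue-null set, so $X_1\equiv 0$ as an $L^p$-vector field. However, the assumption $x_1\in S_0=\op{sing}(X_0)$ together with the contrapositive of the $\epsilon$-regularity Theorem \ref{ereg} forces the uniform lower bound $\lambda^{2p-3}\int_{B_\lambda(x_1)}|X_0|^p\geq\epsilon_p$ at all small scales $\lambda$; passing to the limit along the blow-up subsequence yields $\|X_1\|_{L^p(B_1)}^p\geq\epsilon_p>0$, contradicting $X_1\equiv 0$. The delicate step, and the one I expect to be the main obstacle, is the pointwise a.e.\ transfer of the directional constraint to the blow-up limit: it requires choosing subsequences so that strong $L^p$-convergence is coupled with a.e.\ convergence, and handling the measure-zero set of points where $X_0$ fails to be radially pointing in the classical sense (in particular near the singular rays of $S_0$ accumulating at $x_1$).
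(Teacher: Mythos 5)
Your argument is correct and follows essentially the same route as the paper: the cone structure of $S_0$ via radiality, the $(-2)$-homogeneity of $|X_0|$ and $\op{div}X_0=0$ off $S_0$, then a second blow-up at a putative singular point $x_1\neq 0$ whose limit $X_1$ is simultaneously radially directed (Proposition \ref{radiality} applied to the minimizer $X_0$, using Theorem \ref{weakstrong} for strong convergence) and directed along the fixed direction $\hat x_1$, forcing $X_1\equiv 0$ and contradicting the energy lower bound at singular points coming from the contrapositive of Theorem \ref{ereg}. Your explicit handling of the a.e.\ transfer of the directional constraint and of the persistence of the $\epsilon_p$ energy lower bound merely spells out what the paper leaves implicit in ``by strong convergence \dots contradicting the fact that $x_1\in S_0$''.
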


After Proposition \ref{dimzero} we deduce our main result easily.

\begin{theorem}\label{mainthm2}
 A minimizer $X$ must have finitely many isolated singularities in any open $\Omega'\Subset\Omega$.
\end{theorem}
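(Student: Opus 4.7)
The plan is to proceed by contradiction, combining the structural result of Proposition \ref{dimzero} with a Federer-type blow-up argument. First, observe that the singular set $S:=\op{sing}(X)$ is relatively closed in $\Omega$ by definition (the regular set is open). Suppose for contradiction that $S\cap\Omega'$ is infinite for some $\Omega'\Subset\Omega$; by compactness of $\overline{\Omega'}\subset\Omega$, the set $S$ then has an accumulation point $x_0\in\overline{\Omega'}\subset\Omega$. I would choose singular points $y_i\in S\setminus\{x_0\}$ with $y_i\to x_0$ and set $\lambda_i:=2|y_i-x_0|\to 0^+$; with the rescaling convention of Section \ref{tangmaps}, I would consider the blow-ups $X_{\lambda_i}$ about $x_0$ together with the rescaled points $y_i':=(y_i-x_0)/\lambda_i\in\op{sing}(X_{\lambda_i})$, which satisfy $|y_i'|=1/2$.

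By the monotonicity formula (Proposition \ref{monotonicity}), the energies $\int_{B_1}|X_{\lambda_i}|^p$ are uniformly bounded, so a subsequence converges weakly to some $X_0\in L^p_{\mathbb Z}(B_1)$. Theorem \ref{weakstrong} upgrades this to strong $L^p$-convergence on $B_r$ for every $r<1$ and guarantees that $X_0$ is itself a minimizer, i.e.\ a tangent map of $X$ at $x_0$. By Proposition \ref{dimzero} we have $\op{sing}(X_0)\subset\{0\}$. Along a further subsequence $y_i'\to y_\infty$ with $|y_\infty|=1/2$, so $y_\infty\neq 0$ and hence $y_\infty$ is a regular point of $X_0$.

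The main work — and the step I expect to be the principal obstacle — is a localized upper-semicontinuity of the singular set: I need to show $y_i'\notin\op{sing}(X_{\lambda_i})$ for all $i$ large enough, which would contradict the construction of $y_i'$ and close the argument. The idea is already implicit in Proposition \ref{singseqmin}: since $X_0$ is smooth near $y_\infty$, one can pick $\rho>0$ so small that $\rho^{2p-3}\int_{B_\rho(y_\infty)}|X_0|^p<\epsilon_p/2$. Using the strong $L^p(B_r)$-convergence with $r$ slightly larger than $1/2+\rho$ together with $y_i'\to y_\infty$, I would deduce that for all $i$ large enough $(\rho/2)^{2p-3}\int_{B_{\rho/2}(y_i')}|X_{\lambda_i}|^p<\epsilon_p$. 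The $\epsilon$-regularity theorem \ref{ereg} then gives $\op{div}X_{\lambda_i}=0$ on $B_{\rho/4}(y_i')$, and Theorem \ref{hereg} upgrades this to H\"older continuity there, contradicting $y_i'\in\op{sing}(X_{\lambda_i})$.

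Thus $S\cap\Omega'$ must be finite, and since every finite subset of $\mathbb R^3$ consists of isolated points, the theorem follows. All the heavy machinery — strong compactness, tangent-map structure, $\epsilon$-regularity — is already in place; the only delicate point is the persistence-of-singularities step, which is a careful but essentially routine localization of the reasoning used in Proposition \ref{singseqmin}.
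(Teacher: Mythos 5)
Your proof is correct and takes essentially the same route as the paper: assume an accumulation point of singularities, blow up there, use Theorem \ref{weakstrong} to obtain a minimizing tangent map, confine its singular set to the origin via Proposition \ref{dimzero}, and contradict this with the rescaled singular points sitting at distance $1/2$ from the origin. The only cosmetic difference is that you re-derive the needed upper semicontinuity of singular sets directly from $\epsilon$-regularity and strong convergence near $y_\infty$, whereas the paper cites Corollary \ref{schssing}, whose proof is the same localization argument.
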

\begin{proof}
 If $X$ had an accumulating sequence of singular points $\op{sing}(X)\ni x_i\to x\in\Omega'$, then we can select a small $r>0$ such that $B(x,r)\subset\Omega'$. Then we can consider the distances
$$
\lambda_i=\frac{|x-x_i|}{4},
$$
and we observe that for the blowups $S_i$ of ratio $\lambda_i$ and center $x$, there holds $\mathcal H^0(S_i\cap B_{1/2})>2$. This contradicts Proposition \ref{dimzero} (where $\mathcal H^0(S_0)\leq1$) and the semicontinuity proved in Corollary \ref{schssing}.
\end{proof}

\section{Stationarity and monotonicity}\label{statmon}
\subsection{Stationarity formula}

We consider a smooth diffeomorphism $\varphi_t:=id+tV$, where $V$ is a compactly supported vector field and $t$ is small enough. We compute the stationarity formula arising from
$$
\left.\frac{d}{dt}\int |\varphi_t^*\omega|^p\right|_{t=0}=0.
$$
We recall the formula of the norm of the pullback of $\omega$ via $\varphi_t$, with respect to an orthonormal frame field $e_1,e_2,e_3$:
\begin{eqnarray*}
 |(\varphi_t^*\omega)_x|^2&=&\sum_{i,j=1}^n\left|\omega_{\varphi_t(x)}(d\varphi_te_i,d\varphi_te_j)\right|^2\\
&=&\sum_{i,j=1}^n\left|\omega_{\varphi_t(x)}(e_i + t\;dV\cdot e_i,e_j+t\;dV\cdot e_j)\right|^2.
\end{eqnarray*}
To deal better with the above $t$-derivative, we change variable (we let $y:=\varphi_t^{-1}(x)$), so that the point at which we calculate the norm of $\omega$ does not depend on $t$:
$$
\int |\varphi_t^*\omega_x|^pdx=\int\left[\left(\sum_{i,j=1}^n|\omega_y(e_i + t\;dV\cdot e_i,e_j+t\;dV\cdot e_j)|^2\right)^{p/2}\det(id+tdV)^{-1}\right]dy
$$
Now we take the derivative of the integrand in $t=0$, obtaining by easy computations (see for example \cite{Price}):
\begin{equation}\label{stationarity}
 p\int|\omega|^{p-2}\sum_{i,j=1}^3\omega(e_i,e_j)\omega(\nabla_{e_i}V,e_j) - \int|\omega|^p\op{div}V=0.
\end{equation}
The above formula is justified for minimization problems in $L^p$, because we are sure that the manipulations done extend to that setting. What ensures that doing the pullback preserves the property of being in $L^p_{\mathbb Z}$ as well, is the following:
\begin{proposition}
Consider a regular foliation
$$
\{\Sigma_\lambda^2: \; \lambda\in[-\epsilon,\epsilon]\},
$$
i.e. a parameterized set of $2$-surfaces in $\mathbb R^3$ such that if $N_\epsilon\Sigma=\cup_\lambda\Sigma_\lambda^2$, then the following (has sense and) holds:
$$
\int_{N_\epsilon\Sigma}X\cdot\nu_{\Sigma_\lambda^2}d\mathcal H^3\simeq \int_{-\epsilon}^\epsilon\int_{\Sigma_\lambda^2}X\cdot\nu_{\Sigma_\lambda^2}d\mathcal H^2\;d\lambda,
$$
where $\nu_{\Sigma_\lambda^2}$ is the normal vector of $\Sigma_\lambda^2$.\\

The following property is equivalent to the fact that $X\in L^p_{\mathbb Z}$:\\
For almost all $\lambda\in [-\epsilon, \epsilon]$ the following holds:
\begin{equation}\label{integer}
 \int_{\Sigma_{\lambda}^2}X\cdot\nu\;d\mathcal H^2\in\mathbb Z.
\end{equation}
\end{proposition}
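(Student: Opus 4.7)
The $(\Leftarrow)$ implication is immediate: specializing the hypothesis to the regular foliation of $B^3$ by concentric spheres $\partial B_r(a)$ around any point $a\in B^3$ recovers exactly the defining condition \eqref{star} of $L^p_{\mathbb Z}$. The entire content lies in the $(\Rightarrow)$ direction; my plan is to approximate $X$ strongly by vector fields with isolated integer charges, check \eqref{integer} for the approximants leaf-by-leaf via the classical divergence theorem, and then pass to the $L^p$-limit on a.e. leaf via a Fubini slicing.

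First I would invoke Theorem \ref{densityRK} to pick $\tilde X_n\in \mathcal R^\infty_{\mathbb Z}(B^3)$ with $\tilde X_n\to X$ in $L^p$, and in particular in $L^p(N_\epsilon\Sigma)$. Each $\tilde X_n$ is smooth outside a finite singular set $\{a_{n,i}\}$ carrying integer charges $q_{n,i}:=\lim_{\rho\to 0^+}\int_{\partial B_\rho(a_{n,i})}\tilde X_n\cdot\nu \in\mathbb Z$. The sphere-flux condition forces $\op{div}\tilde X_n\equiv 0$ on the regular set: at a regular point $a$, the map $r\mapsto \int_{\partial B_r(a)}\tilde X_n\cdot\nu$ is continuous, integer-valued for a.e. $r$, and tends to $0$ as $r\to 0^+$, so it must vanish for all small $r>0$, which by the divergence theorem gives $\op{div}\tilde X_n(a)=0$. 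Consequently, for any closed surface $\Sigma$ avoiding $\{a_{n,i}\}$, the divergence theorem on the bounded region enclosed by $\Sigma$ yields
$$
\int_{\Sigma}\tilde X_n\cdot\nu \, d\mathcal H^2 \;=\; \sum_{a_{n,i}\text{ enclosed by }\Sigma} q_{n,i} \;\in\;\mathbb Z.
$$
Since the leaves $\Sigma_\lambda$ are pairwise disjoint and each singular set is finite, for each $n$ only finitely many $\lambda$ yield a leaf meeting $\{a_{n,i}\}$; discarding the countable union of these $\lambda$-null sets, we retain a full-measure set on which $\int_{\Sigma_\lambda}\tilde X_n\cdot\nu\in\mathbb Z$ holds simultaneously for every $n$.

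Finally I would pass to the limit $n\to\infty$ using the Fubini-type identity built into the definition of regular foliation:
$$
\int_{-\epsilon}^{\epsilon}\|\tilde X_n-X\|_{L^p(\Sigma_\lambda)}^{\,p}\,d\lambda \;\simeq\; \|\tilde X_n-X\|_{L^p(N_\epsilon\Sigma)}^{\,p}\;\xrightarrow[n\to\infty]{}\;0.
$$
Extracting a subsequence along which these integrals are summable, a Borel--Cantelli argument yields $\tilde X_n\to X$ in $L^p(\Sigma_\lambda)$, hence (since $\Sigma_\lambda$ has finite $\mathcal H^2$-measure) also in $L^1(\Sigma_\lambda)$, for a.e. $\lambda$. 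Therefore $\int_{\Sigma_\lambda}\tilde X_n\cdot\nu\to \int_{\Sigma_\lambda}X\cdot\nu$ pointwise in $\lambda$ on a full-measure set, and closedness of $\mathbb Z$ in $\mathbb R$ forces the limit to lie in $\mathbb Z$. The main obstacle is precisely this slicing step: passing from the global $L^p$-convergence to leafwise $L^p$-convergence relies crucially on the ``regularity'' of the foliation encoded in the assumed Fubini identity, without which $\|\tilde X_n-X\|_{L^p(\Sigma_\lambda)}$ need not even be defined for a.e. $\lambda$, let alone tend to zero.
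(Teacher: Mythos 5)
Your proposal is correct and follows essentially the same route as the paper: both rest on the strong density of $\mathcal R^\infty_{\mathbb Z}$ (Theorem \ref{densityRK}), the integrality of the approximants' fluxes through almost every leaf, and the Fubini-type comparability built into the definition of a regular foliation to transfer $L^p$-convergence to the leaves. The only differences are cosmetic: the paper concludes by contradiction with a quantitative H\"older lower bound (if the flux of $X$ stayed in $]a+c,a+1-c[$ on a set of $\lambda$ of measure $\delta$, the $L^p$-distance to any approximant would be at least $C\delta c^p$), whereas you pass to the limit directly via Borel--Cantelli and closedness of $\mathbb Z$, and you spell out the divergence-theorem verification that elements of $\mathcal R^\infty_{\mathbb Z}$ have integer flux through leaves avoiding their singular points, a step the paper uses implicitly.
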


\begin{proof}
 This follows since $\mathcal R^\infty$ is dense in $L^p_{\mathbb Z}$ in the $L^p$-norm. Suppose indeed that there exists a closed $C^2$-surface $\Sigma$ such that for a set of $\lambda\in[-\epsilon,\epsilon]$ of measure $\delta>0$ there holds
$$
\int_{\Sigma_\lambda^2} X\cdot\nu d\mathcal H^2\in]a+c,a+1-c[,\quad\text{for some }a\in\mathbb Z.
$$
In particular, whenever $X_i\stackrel{L^p}{\to}X$, $X_i\in R^\infty_\varphi$ then
\begin{eqnarray*}
 \int_{N_\epsilon\Sigma}|X_i - X|^pd\mathcal H^3 &\geq& C\int_{-\epsilon}^\epsilon\int_{\Sigma_\lambda^2}|X_i - X|^pd\mathcal H^2\;d\lambda\\
&\geq&C\int_{-\epsilon}^\epsilon|\Sigma_\lambda^2|^{1-p}\left|\int_{\Sigma_\lambda^2}X_i\cdot\nu d\mathcal H^2 - \int_{\Sigma_\lambda^2}X\cdot\nu d\mathcal H^2\right|^pd\lambda\\
&\geq&C\delta c^p,
\end{eqnarray*}
contradicting the convergence in $L^p$-norm stated above.
\end{proof}

Since for $t<||X||_\infty/2$, it follows that $\phi_t$ is a diffeomorphism and the integral of $\omega$ on a sphere $S$ is by definition the same as the integral of $\phi_t^*\omega$ on $\phi_t^{-1}(S)$, we see by the above proposition that $\omega\in\mathcal F_{\mathbb Z}^p(\Omega)$ implies that also the perturbations $\phi_t^*\omega$ belong to the same space for $t$ small.

\subsection{Monotonicity formula}
In this section we prove a refinement of the stationarity formula. Since the proof is independent if the dimension $n$ of our domain, we give a formulation in any dimension (the defnition of $\mathcal F_{\mathbb Z}^p(\Omega)$ now requiring the degree to be an integer on any $2$-dimensional sphere). For our applications we will just use the case $n=3$.
\begin{proposition}[Monotonicity formula]
 If $\omega\in\mathcal F_{\mathbb Z}^p$ is stationary, then for all $x$ and almost all $r\in]0,R]$ with the constraint $B_R(x)\subset\Omega$ there holds
\begin{equation}\label{eq:monotonicityend}
 \frac{d}{dr}\left(r^{2p-n}\int_{B_r}|\omega|^pdy\right)=2p\;r^{2p-n}\int_{\partial B_r}|\omega|^{p-2}|\partial_\rho\lrcorner\omega|^2d\sigma
\end{equation}
where $\partial_\rho=\frac{\partial}{\partial\rho}$ is the radial derivative.
\end{proposition}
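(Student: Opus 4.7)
The plan is to apply the stationarity formula \eqref{stationarity} to a family of truncated radial dilation vector fields, and then pass to a limit as the truncation sharpens to the indicator of $B_r$. Fix $x\in\Omega$ and work in coordinates centred at $x$. For $\epsilon>0$ small let $\eta_\epsilon:[0,\infty)\to[0,1]$ be a smooth cutoff equal to $1$ on $[0,r-\epsilon]$ and to $0$ on $[r,\infty)$, and set $V_\epsilon(y):=\eta_\epsilon(|y|)\,y$. This vector field is smooth and compactly supported in $\Omega$ whenever $B_r(x)\subset\Omega$, so it is admissible in \eqref{stationarity}.

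A short Cartesian computation gives
$$
\nabla_{e_i}V_\epsilon=\eta_\epsilon(\rho)\,e_i+\rho\,\eta_\epsilon'(\rho)\,(e_i\cdot\hat\rho)\,\hat\rho,\qquad \op{div}V_\epsilon=n\eta_\epsilon(\rho)+\rho\,\eta_\epsilon'(\rho).
$$
Using antisymmetry of $\omega$ together with the identity $\sum_i(e_i\cdot\hat\rho)\omega(e_i,e_j)=\omega(\hat\rho,e_j)$, the bilinear expression in \eqref{stationarity} splits cleanly into a radial and a transverse piece, proportional respectively to $\eta_\epsilon|\omega|^2$ and to $\rho\,\eta_\epsilon'\,|\partial_\rho\lrcorner\omega|^2$. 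Substituting into \eqref{stationarity} and collecting terms (using the normalization of $|\omega|$ employed in the derivation of the stationarity formula), one obtains the identity
$$
(2p-n)\int\eta_\epsilon|\omega|^p\,dy+2p\int\rho\,\eta_\epsilon'\,|\omega|^{p-2}|\partial_\rho\lrcorner\omega|^2\,dy=\int\rho\,\eta_\epsilon'\,|\omega|^p\,dy.
$$

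Now let $\epsilon\to 0^+$. By the coarea formula the signed measures $-\eta_\epsilon'(\rho)\,d\rho$ converge to the Dirac mass at $\rho=r$; and since $|\omega|\in L^p_{\mathrm{loc}}$, Fubini's theorem guarantees that for almost every $r\in(0,R)$ the integrands $|\omega|^p$ and $|\omega|^{p-2}|\partial_\rho\lrcorner\omega|^2$ admit genuine $L^1$-traces on $\partial B_r$. For such $r$ the limit identity reads
$$
(2p-n)\int_{B_r}|\omega|^p+r\int_{\partial B_r}|\omega|^p=2p\,r\int_{\partial B_r}|\omega|^{p-2}|\partial_\rho\lrcorner\omega|^2.
$$
Combining this with the direct differentiation
$$
\frac{d}{dr}\Bigl(r^{2p-n}\int_{B_r}|\omega|^p\Bigr)=r^{2p-n-1}\Bigl[(2p-n)\int_{B_r}|\omega|^p+r\int_{\partial B_r}|\omega|^p\Bigr]
$$
yields exactly \eqref{eq:monotonicityend}.

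The only non-routine step is the justification of the coarea limit, which is what forces the identity to hold only for almost every $r$ (the exceptional set being those radii at which the boundary traces fail to exist). Positivity of the right-hand side of \eqref{eq:monotonicityend} is then automatic, so the monotonicity of $r\mapsto r^{2p-n}\int_{B_r}|\omega|^p$ drops out as an immediate corollary, which is the use we want to make of the identity in Section~\ref{tangmaps}.
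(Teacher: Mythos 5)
Your argument is correct, but it follows a genuinely different route from the paper's. You feed the truncated radial field $V_\epsilon(y)=\eta_\epsilon(|y|)\,y$ into the inner--variation identity \eqref{stationarity} and then sharpen the cutoff, so the monotonicity identity appears as a Pohozaev-type consequence of the stationarity formula plus a Lebesgue-point argument in the radius (your limiting step is fine for a.e.\ $r$ provided you also fix $|\eta_\epsilon'|\leq 2/\epsilon$, so that $-\eta_\epsilon'\,d\rho$ is a genuine approximate identity at $r$, and use that $\rho\mapsto\int_{\partial B_\rho}|\omega|^p$ is in $L^1$ by Fubini together with the a.e.\ differentiability of $r\mapsto\int_{B_r}|\omega|^p$). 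The paper instead does not reuse \eqref{stationarity}: it constructs the explicit two-parameter family of bi-Lipschitz radial maps $F_{r,s,t}$ (dilation by $t$ on $B_r$, interpolation on $B_s\setminus B_r$, identity outside), computes $\frac{d}{dt}\int|F_{r,s,t}^*\omega|^p$ at $t=1$ term by term, and then lets $s\downarrow r$; the collapsing annulus plays exactly the role of your cutoff limit. Your version is shorter and makes transparent that only stationarity is used, which matches the hypothesis of the proposition; the paper's version keeps the admissibility of the competitors manifest (a bijective Lipschitz $F$ gives $F^*\omega\in\mathcal F_{\mathbb Z}^p$) and is independent of the constants appearing in \eqref{stationarity}. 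On that last point, one caveat you should not gloss over: with the convention $|\omega|^2=\sum_{i,j}\omega(e_i,e_j)^2$ used to derive \eqref{stationarity}, the coefficient of the bilinear term there should in fact be $2p$ (the two cross terms in $t$ each contribute), so taking \eqref{stationarity} literally with the factor $p$ your substitution would yield $(p-n)$ and $p$ rather than $(2p-n)$ and $2p$. The coefficients you wrote are the correct ones --- they are the only ones compatible with the scaling weight $2$ of a $2$-form and with \eqref{eq:monotonicityend} --- but you should verify that factor explicitly rather than appeal to ``the normalization employed in the derivation''.
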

\begin{proof}

We use a strategy similar to \cite{HL}. If $F:B_R\to B_R$ is a weakly differentiable bijective Lipschitz function, and if $\omega\in\mathcal F_{\mathbb Z}^p$ then also $F^*\omega\in\mathcal F_{\mathbb Z}^p$, so it is a competitor in our minimization. Therefore the stationarity
$\left.\frac{d}{dt}\int_{B_R}|F_t^*\omega|^p\right|_{t=1}=0,$ holds provided that $F_0=id_{B_R}$ and that the family $F_t$ is differentiable in $t$. Such properties will be clear from our choices of the map $F$.
\eqref{eq:monotonicityend} follows from this. \\

\textbf{Definition of $F$}\\
Fix $0<r<s<R$ such that $0<t<s/r$. Then we define a function $F=F_{r,s,t}:B_R\to B_R$ by $F(x):=\eta(|x|)x$, such that $$
\rho:=|x|\mapsto |F(x)|
$$
is continuous and affine on each of the intervals $[0,r],[r,s],[s,R]$. We define
\begin{equation}\label{eq:defF}
\eta(\rho)=\left\{\begin{array}{lll}
             t&\text{ if }\rho\leq r\\
             1&\text{ if }\rho\in [s,r]
            \end{array}
\right.
\end{equation}
and $\eta|_{]r,s[}$ is defined accordingly:
$$
\eta|_{[r,s]}(\rho):=\frac{s-tr}{s-r} + \frac{1}{\rho}\frac{rs(t-1)}{s-r}.
$$

\textbf{Expression of $|F^*\omega|^2$}\\
We do our computation in coordinates. We choose a basis $\{e_0,e_1,\ldots,e_{n-1}\}$ with respect to which to write the matrix $dF_x$, where $e_0=\partial_\rho$ and the other vectors form an orthogonal basis together with it. Then
\begin{equation}\label{eq:dF}
 \frac{\partial F}{\partial x_k}=\eta e_k +\rho\eta'\delta_{0k} e_0.
\end{equation}

Then 
\begin{eqnarray*}
|(F^*\omega)_x|^2&=&\sum_{i,j}\left[\omega_{F(x)}(dF_xe_i,dF_xe_j)\right]^2\\
&=&\sum_{i,j=0}^{n-1}\left|\omega\left(\frac{\partial F}{\partial x_i},\frac{\partial F}{\partial x_j}\right)\right|^2\\
&=&\sum_{i,j>0}|\omega(\eta e_i, \eta e_j)|^2 + 2\sum_{i>0}|\omega((\eta +\rho\eta')e_0,\eta e_i)|^2\\
&=&\eta^4\sum_{i,j>0}\omega_{ij}^2 + 2\eta^2(\eta +\rho\eta')^2|\partial_\rho\lrcorner\omega|^2.
\end{eqnarray*}

\textbf{The derivative in $t$}\\
We now start the computations for the monotonicity formula. 
\begin{equation}\label{eq:1}
 \int_{B_R}\left|F^*\omega\right|^p=I+II+III
\end{equation}
where, after a change of variables $y=F^{-1}(x)$,
\begin{eqnarray*}
I:&=&\int_{B_r}\left|F^*\omega\right|^p=t^{2p-n}\int_{B_{rt}}|\omega|^pdy,\\
II:&=&\int_{B_s\setminus B_r}\left|F^*\omega\right|^p\\
III:&=&\int_{B_R\setminus B_s}\left|F^*\omega\right|^p=\int_{B_R\setminus B_s}\left|\omega\right|^pdy
\end{eqnarray*}
We want now to change variable also in $II$ and to take $\left.\frac{d}{dt}\right|_{t=1}$ of the terms above. The easy terms give:
\begin{eqnarray*}
I':=\left.\tfrac{d}{dt}\right|_{t=1}(I)&=&(2p-n)\int_{B_r}|\omega|^pdy+r\int_{\partial B_r}|\omega|^pd\sigma\\
III':=\left.\tfrac{d}{dt}\right|_{t=1}(III)&=&0
\end{eqnarray*}

\textbf{Ingredients for the computations}\\
\begin{itemize}
 \item We observe that 
$$
\eta(\rho) +\rho\eta'(\rho)=\frac{s-tr}{s-r},
$$
which has $t$-derivative $\frac{-r}{s-r}$. It is useful to keep in mind that $\eta=1$ for $t=1$; this will be used without mention in the calculations.
\item If $y=F(x)$ and $\sigma:=|y|$, then we can write the expression of $\eta$ in terms of $\sigma$:
$$
\sigma=\rho\eta(\rho)=(\rho-r)\frac{s-tr}{s-r} +tr
$$
so
$$
\rho=f(\sigma):=(s-r)\frac{\sigma-tr}{s-tr} +r
$$
and
$$
\eta(f(\sigma))=\frac{s-tr}{s-r} +\left[(s-r)\frac{\sigma - tr}{s-tr}+r\right]^{-1}\frac{rs(t-1)}{s-r},
$$
whence
$$
\left.\frac{d}{dt}\eta(f(\sigma))\right|_{t=1}=-\frac{r}{s-r} +\frac{rs}{\sigma(s-r)}.
$$
\item From \eqref{eq:dF} it follows that for $\rho:=|x|\in[r,s]$,
$$
 J(dF^{-1})=\left[\eta(\rho)^{n-1}(\eta(\rho)+\rho\eta'(\rho))\right]^{-1}, 
$$
so
\begin{eqnarray*}
 \left.\frac{d}{dt}J(dF^{-1})\right|_{t=1}&=&(1-n)\left.\frac{d}{dt}\eta\right|_{t=1} + \left.\frac{d}{dt}\left(\frac{s-r}{s-tr}\right)\right|_{t=1}\\
&=&(1-n)\left[-\frac{r}{s-r} + \frac{rs}{\sigma(s-r)}\right] +\frac{r}{s-r}.
\end{eqnarray*}
\end{itemize}

\textbf{The computation of the $t$-derivative}\\
We call $|i^*\omega_y|^2 =\sum_{i,j>0}\omega_{ij}^2$ and we obtain
\begin{eqnarray*}
 II&=&\int_{B_s\setminus B_{rt}}\left(|i^*\omega_y|^2\eta^4 + 2\left(\frac{s-tr}{s-r}\right)^2 \eta^2 |\omega_y(\hat y,\cdot)|^2 \right)^{p/2} J(dF^{-1})dy\\
II'&:=&\left.\frac{d}{dt}II\right|_{t=1}\\
\text{(derivative of the domain)}&=&-r\int_{\partial B_r}|\omega_y|^p d\sigma\\
\quad\text{(der. of the Jacobian)}&&+(n-1)\frac{r}{s-r}\int_{B_s\setminus B_r}\left(1-\frac{s}{|y|}\right)|\omega_y|^p dy + \frac{r}{s-r}\int_{B_s\setminus B_r}|\omega_y|^p dy\\
\quad\text{(der. of the main term)}&&\frac{p}{2}\left[\frac{r}{s-r}\int_{B_s\setminus B_r}4|\omega|^p\left(\frac{s}{|y|}-1\right)dy - \frac{r}{s-r}\int_{B_s\setminus B_r}4|\omega|^{p-2}|\partial_\rho\lrcorner\omega|^2dy\right].
\end{eqnarray*}
\textbf{We now take the limit $s\downarrow r$} and we are interested in seeing what the equation $I'+II'+III'=0$ becomes. The answer is
\begin{eqnarray*}
\lim_{s\downarrow r}II'&=&-r\int_{\partial B_r}|\omega|^p d\sigma\\
&&+0+r\int_{\partial B_r}|\omega|^pdy\\
&&+0-2pr\int_{\partial B_r}|\omega|^{p-2}|\partial_\rho \lrcorner\omega|^2dy\\
&=&-2pr\int_{\partial B_r}|\omega|^{p-2}|\partial_\rho \lrcorner\omega|^2dy,
\end{eqnarray*}
and 
$$
 \lim_{s\downarrow r}I'=(2p-n)\int_{B_r}|\omega|^pdy + r\int_{\partial B_r}|\omega|^pdy
$$

Summing up and using the fact that $\omega$ is a minimizer of the energy, we get
$$
(2p-n)\int_{B_r}|\omega|^pdy+r\int_{\partial B_r}|\omega|^pdy=2pr\int_{\partial B_r}|\omega|^{p-2}|\partial_\rho \lrcorner\omega|^2dy
$$
Multiplying both the r.h.s. and the l.h.s of the above equation by $r^{2p-n-1}$ we get the desired formula
$$
\frac{d}{dr}\left(r^{2p-n}\int_{B_r}|\omega|^pdy\right)=2p\;r^{2p-n}\int_{\partial B_r}|\omega|^{p-2}|\partial_\rho \lrcorner\omega|^2dy
$$
\end{proof}

In terms of vector fields, we can state the following:

\begin{proposition}[Monotonicity formula, alternative formulation]
 If $X\in L_{\mathbb Z}^p$ minimzes the energy, then for almost all $r\in[0,R]$ there holds
\begin{equation}\label{eq:monotonicity2}
 \frac{d}{dr}\left(r^{2p-n}\int_{B_r}|X|^pdy\right)=2p\;r^{2p-n}\int_{\partial B_r}|X|^{p-2}|X-\langle X,\nu_{B_r}\rangle\nu_{B_r}|^2d\mathcal H^2
\end{equation}
\end{proposition}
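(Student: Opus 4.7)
The plan is to deduce the vector-field version directly from the differential-form version already proved, via the correspondence
\[
F_p(U,V) = X_p \cdot (U \times V)
\]
that was introduced earlier in the paper. Since the identity to be proved differs from the one in the preceding proposition only in the replacement $\omega \leftrightarrow X$ and $\partial_\rho \lrcorner \omega \leftrightarrow X - \langle X,\nu\rangle \nu$, essentially no new analysis is needed; what remains is a pointwise algebraic identity on the integrands.

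First I would verify that $|\omega|(y) = |X|(y)$ for a.e.\ $y$. Writing $X = \sum_i X^i e_i$ with respect to an orthonormal frame and using the defining formula $\omega(e_i,e_j) = X \cdot (e_i \times e_j)$, one checks that the $2$-form coefficients are (up to signs) a permutation of the components of $X$, giving $\sum_{i<j}\omega_{ij}^2 = |X|^2$. Hence $|\omega|^{p-2}|\omega|^2 = |X|^{p-2}|X|^2$ on the left-hand side of \eqref{eq:monotonicityend}, so that
\[
r^{2p-n}\int_{B_r}|\omega|^p\,dy = r^{2p-n}\int_{B_r}|X|^p\,dy,
\]
matching the derivative on the left of \eqref{eq:monotonicity2}.

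Next I would compute $|\partial_\rho \lrcorner \omega|^2$ pointwise on $\partial B_r$. For any tangent vector $V$,
\[
(\partial_\rho \lrcorner \omega)(V) = \omega(\nu,V) = X \cdot (\nu \times V),
\]
where $\nu = \partial_\rho$ is the outward unit normal. Decomposing $X = \langle X,\nu\rangle\nu + X^{\parallel}$ with $X^{\parallel}$ tangential and using $\nu \cdot (\nu\times V)=0$, this reduces to $X^{\parallel}\cdot (\nu \times V)$. The map $V \mapsto \nu \times V$ is an isometry of $\nu^{\perp}$ (a $90^{\circ}$ rotation in the tangent plane), so taking the supremum over unit tangent vectors $V$ yields
\[
|\partial_\rho \lrcorner \omega|^2 = |X^{\parallel}|^2 = \bigl|X - \langle X,\nu_{B_r}\rangle\nu_{B_r}\bigr|^2.
\]
Combined with $|\omega|^{p-2} = |X|^{p-2}$, this identifies the integrand on the right-hand side of \eqref{eq:monotonicityend} with that of \eqref{eq:monotonicity2}.

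Finally, since a minimizer $X$ gives rise (via the above correspondence) to an $\omega \in \mathcal F_{\mathbb Z}^p(\Omega)$ which is stationary for the $L^p$-energy in the sense used in the preceding proposition, I would invoke that proposition to conclude \eqref{eq:monotonicity2}. No genuine obstacle is expected: the essential analytic content (choice of the radial diffeomorphism family $F_{r,s,t}$ and the computation of $\frac{d}{dt}|_{t=1}$) has already been carried out, and what remains is the bookkeeping of the Hodge-type correspondence in three dimensions.
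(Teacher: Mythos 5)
Your proposal is correct and follows essentially the same route the paper intends: the vector-field statement is just the translation of the form version \eqref{eq:monotonicityend} through the correspondence $\omega(U,V)=X\cdot(U\times V)$, using that a minimizer is in particular stationary, together with the pointwise identifications $|\omega|=|X|$ and $|\partial_\rho\lrcorner\omega|=|X-\langle X,\nu_{B_r}\rangle\nu_{B_r}|$ that you verify. The only caveat is a harmless normalization: depending on whether one sums over ordered pairs $i,j$ (as in the paper's computations) or over $i<j$, the identification $|\omega|^2=|X|^2$ may pick up a constant factor, which affects only the overall constant and not the monotonicity statement.
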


\bibliographystyle{amsalpha}

 \end{document}